\documentclass[review,onefignum,onetabnum]{siamart190516}

\usepackage{verbatim}
\usepackage{lipsum}
\usepackage{amsfonts}
\usepackage{graphicx}
\usepackage{subfigure}
\usepackage{epstopdf}
\usepackage{algorithmic}
\ifpdf
  \DeclareGraphicsExtensions{.eps,.pdf,.png,.jpg}
\else
  \DeclareGraphicsExtensions{.eps}
\fi

\theoremstyle{plain}
\newtheorem{thm}{Theorem}[section]

\newtheorem{remark}{\textbf{Remark}}[section]

\newcommand{\eps}{\epsilon}

\newcommand{\bm}{\boldsymbol}
\newcommand{\bu}{{\mathbf u}}
\newcommand{\bv}{{\mathbf v}}

\newcommand{\Grad}[1]{\nabla #1}


\newcommand{\be}{\begin{equation}}
\newcommand{\ee}{\end{equation}}

\newcommand{\bse}{\begin{subequations}}
\newcommand{\ese}{\end{subequations}}
\def\benl{\begin{eqnarray*}}
\def\eenl{\end{eqnarray*}}




\def\bu{\bm{u}}

\def\be{\bm{e}}

\def\bx{\bm{x}}

\def\bmu1{\bm{\mu_1}}

\def\ln{{\rm ln}}


\newcommand{\ben}{\begin{eqnarray}}
\newcommand{\een}{\end{eqnarray}}
\newcommand{\beq}{\begin{equation}}
\newcommand{\eeq}{\end{equation}}
\newcommand{\bea}{\begin{array}}
\newcommand{\eea}{\end{array}}
\newcommand{\bef}{\begin{figure}[H]}
\newcommand{\eef}{\end{figure}}


\newsiamremark{hypothesis}{Hypothesis}
\crefname{hypothesis}{Hypothesis}{Hypotheses}
\newsiamthm{claim}{Claim}


\title{The  generalized scalar auxiliary variable approach (G-SAV)  for gradient flows}

\author{Qing Cheng\thanks{Department of Applied Mathematics, Illinois Institute of Technology, Chicago, IL 60616, USA 
  (\email{qcheng4@iit.edu)}. }}
\usepackage{amsopn}

%


\externaldocument{ex_supplement}


\begin{document}
\bibliographystyle{plain}
\graphicspath{ {Figures/} }
\maketitle

\begin{abstract}
We  establish a general framework for developing, efficient energy stable numerical schemes for gradient flows  and develop three classes of   generalized scalar auxiliary variable approaches (G-SAV).   Numerical schemes  based on the G-SAV approaches  are as efficient as the original  SAV schemes \cite{SXY19,cheng2018multiple}  for gradient flows, i.e., only require solving linear  equations with constant coefficients at each time step,   can be unconditionally energy stable.   But G-SAV approaches  remove  the definition restriction that auxiliary variables can only be square root function.  The definition form of auxiliary variable is applicable to any reversible function for G-SAV approaches .  Ample numerical results for phase field models are presented  to validate the effectiveness and accuracy of the proposed G-SAV numerical schemes.
\end{abstract}

\begin{keywords}
 gradient flow, G-SAV approach,  SAV approach, energy stability, phase-field
\end{keywords}


\section{Introduction}
Gradient flows have been widely used  in science and engineering in the last few decades.
Due to the second law of  thermodynamics,  the common characteristic of gradient flows  is to satisfy the energy dissipative law in time.  Correspondingly,  tremendous efforts have been devoted to the construction of efficient and accurate numerical methods preserving the energy  dissipative law  at the discrete level for gradient flows.   We refer the reader  to the related  review papers  \cite{Du.F19,SXY19,She.Y19}  and convex splitting method \cite{elliott1993global,eyre1998unconditionally,baskaran2013convergence}, stabilized method \cite{qiao2012stability,du2018stabilized}, Average Vector Field method \cite{celledoni2012preserving,quispel2008new},  the newly developed  IEQ method \cite{yang2016linear,YANG2017_3phase},  SAV method (cf. \cite{SXY19,cheng2018multiple}), Lagrange multiplier methods \cite{guillen2013linear,cheng2019new}, RK-SAV method \cite{akrivis2019energy} and gPAV method \cite{Dong2020}  which have received much attention recently due to their efficiency, flexibility and accuracy.

The key point  to achieve the property of preserving energy decay  for  IEQ  and SAV approaches  lies in introducing auxiliary variables. By taking time derivative with respect to auxiliary variables,  the original PDE system of  gradient flows can be transformed  into an equivalent form.  After treating the new auxiliary variables implicitly and nonlinear terms explicitly, several classes of energy stable numerical schemes can be developed (cf.  \cite{SXY19,yang2016linear}).  But the form of the auxiliary variable  is only limited  to the square root function which is critical to the proof of the discrete energy stability.  Meanwhile,  the square root function brings inconvenience to the terms  that energy can not be bounded below \cite{cheng2019highly,li2016characterizing}.  
 In our new  framework  the square root function for auxiliary variable  is not essential to devise energy stable schemes.  For our G-SAV approach,  the definition of scalar  auxiliary variable can be any invertible function with respect to energy functional.  Energy stable numerical schemes can be constructed for a large class of dissipative systems by using our 
 generalized-SAV approach.

The goal of this paper is to establish  a general framework to develop efficient and accurate  time discretization for gradient flows.  We propose three numerical approaches to construct schemes which enjoy all advantages of the SAV schemes,  but also remove the
definition restriction that auxiliary variables can only be square root function.   In this paper three different G-SAV  approaches will be considered: (i)  in the first approach, we  define the auxiliary variable as any invertible function with respect to  energy functional and derive  a large class of energy stable schemes which only need to solve  linear, constant coefficients equation.  The small price to pay is that we need to solve  a nonlinear algebraic equation whose cost is negligible. (ii) In the second approach, we treat the new auxiliary variable explicitly and derive a class of linear, no-iterative, energy stable schemes. But our numerical simulations for BCP mode \cite{avalos2016frustrated,avalos2018transformation}  in section $6$ indicate that the first approach is more robust than the second approach, even though the second approach is much more easy to implement.  (iii) In the third approach, we derive G-SAV  schemes which preserve original energy dissipative law instead of modified energy.  Actually the third approach is equivalent with the new Lagrange multiplier approach \cite{cheng2019new}.  In summary,  there are several  advantages for our new three numerical approaches:
\begin{itemize}
\item For the first approach: the auxiliary variable can be defined by any invertible function
 which eliminates  the constraint of free energy to be bounded below.  The small price to pay  is to solve a nonlinear algebraic system whose cost is negligible.

\item For the second  approach:  by treating the auxiliary explicitly, the numerical solution and auxiliary variable can be expressed explicitly.  It inherits all the advantages of the first approach, but do not need to solve a  nonlinear algebraic equation at each time step.  Furthermore, the auxiliary variable can be guaranteed to be positive by choosing  $\tanh$ function or exponential function which IEQ and SAV approaches can not preserve.
 While its drawback is that it may not  be robust as the first approach in dealing with models with extremely stiff terms, for example coupled Cahn-Hilliard model in Section $6$.

\item For the third  approach: compared with IEQ and original SAV approaches, it preserves original energy dissipative law instead of modified energy. 
The nonlinear part of free energy do not need  to be bounded from below as it is required  in the SAV approach. 

\end{itemize}

The reminder of this paper is structured as follows.  In Section 2,  we  present a general framework  for gradient flows  by using G-SAV approach with single component.   In Sections 3,  we apply the G-SAV approach for gradient flows with multiple components. In Section 4,  we introduce the second  approach for gradient flows. In Section 5, 
We introduce the third approach which preserves original energy dissipative law and the technique of stabilization and adaptive  time stepping strategy.   In Section 6,  we present several  numerical simulations for  Allen-Cahn and Cahn-Hilliard equations to show the validation of G-SAV schemes.  Some numerical experiments will also  be shown for coupled Cahn-Hilliard (BCP) model by using the first approach.  Some concluding remarks are given in Section 7.

\section{The first approach}
\label{sec:main}
We present in this section  a general methodology to develop energy stable numerical schemes  for  gradient flows.  To simplify the presentation,  we consider here single-component models for gradient flows.   The first  generalized G-SAV  approaches developed here will be extended to problems with multi-components models  in the subsequent sections. 

To fix the idea,  we consider a system with total free energy in the form 
\begin{equation}\label{orienergy}
 E(\phi)=\int_\Omega \frac12\mathcal{L}\phi\cdot \phi +F(\phi) d\bx,
\end{equation}
where $\mathcal{L}$ is certain linear positive operator,  $F(\phi)$ is a nonlinear potential.  Then a general gradient flow with the above free energy  takes the following form
\begin{equation}\label{grad:flow}
\begin{split}
&\phi_t=-\mathcal{G}\mu,\\
&\mu=\mathcal{L}\phi+F'(\phi).
\end{split}
\end{equation}
Where  $\mathcal{G}$  is  a  positive operator describing the relaxation process of the system. 
The boundary conditions can be either one of the following two type
\begin{eqnarray}
&&(i)\mbox{ periodic; or } (ii)\,\,\partial_{\bf n} \phi|_{\partial\Omega}=\partial_{\bf n}\mu|_{\partial\Omega}=0,
\end{eqnarray}
where $\bf n$ is the unit outward normal on the boundary $\partial\Omega$. Taking the inner products of the first two equations with $\mu$ and $-\phi_t$ respectively,   taking integration by part,  summing up these two  results,  we obtain the following energy dissipation law:
\begin{equation}
 \frac{d }{d t} E(\phi)=-(\mathcal{G}\mu,\mu),
\end{equation}
where, and in the sequel, $(\cdot, \cdot)$ denotes the inner product in $L^2(\Omega)$. We shall also denote the $L^2$-norm by $\|\cdot\|$. 

Below we shall introduce  the first G-SAV approach which preserves  energy stability  while retaining all essential advantages of the original  SAV \cite{shen2018scalar} approach.

\subsection{ The first G-SAV approach with single nonlinear potential}
We rewrite the original energy \eqref{orienergy} as 
\begin{equation}\label{modenergy}
 E(\phi)=\int_\Omega \frac12\mathcal{L}\phi\cdot \phi d\bx +G^{-1}\{G(
\int_{\Omega}F(\phi)d\bx)\} ,
\end{equation}
where  $G$ is an  invertible function.

Firstly we  define new scalar auxiliary  variable $r=G(\int_{\Omega}F(\phi)d\bx)$.
Taking  derivative of $r$ with respect to time,  we derive 
\begin{equation}\label{eq:1}
r_t=G'(\int_{\Omega}F(\phi)d\bx)(F'(\phi),\phi_t), 
\end{equation}
and notice the equality 
\begin{equation}\label{eq:2}
(G^{-1})'\{G(
\int_{\Omega}F(\phi)d\bx)\}=(G^{-1})'(r) =\frac{1}{G'(\int_{\Omega}F(\phi)d\bx)}.
\end{equation}

Using  two equalities  \eqref{eq:1}-\eqref{eq:2},  we shall rewrite  system \eqref{grad:flow}  
\begin{eqnarray}
&&\partial_t\phi=-\mathcal{G}\mu,\label{gsav:1}\\
&&\mu=\mathcal{L}\phi+\frac{r}{G(\int_{\Omega}F(\phi)d\bx)}F'(\phi)\label{gsav:2},\\
&&\frac{d}{dt}G^{-1}(r)=(G^{-1})'(r) r_t=\frac{r}{G(\int_{\Omega}F(\phi)d\bx)}(F'(\phi),\phi_t). \label{gsav:3}
\end{eqnarray}
There are various SAV approaches by choosing different functions $G$, for example 
\begin{itemize}
\item  The monotone  polynomial SAV approach: for example  $G=x^{\frac 13}$,  $x^{\frac 15}$ or $x^3$;
\item The square root  SAV approach and  original SAV approach \cite{shen2018scalar}: $G=\sqrt{x+C}$;
\item The mapped  exponential SAV approach: $G=e^{\frac{x}{C}}$;
\item The mapped  $\tanh$ SAV approach: $G=\tanh(\frac{x}{C})$.
\end{itemize}
Where $C$ is a positive constant.
Taking the inner products of the first two equations with $\mu$ and $-\phi_t$ respectively,   summing up the results along with the third  equation,  we obtain the following energy dissipation law:
\begin{equation}
\frac{d }{dt} \tilde E(\phi)=-(\mathcal{G}\mu,\mu),
\end{equation}
where $\tilde E(\phi)=\int_\Omega \frac12\mathcal{L}\phi\cdot \phi d\bx +G^{-1}(r) $ is a modified energy.
\subsection{Schemes  based on the G-SAV approach with single nonlinear potential}
A first-order  numerical scheme based on  the G-SAV approach for system \eqref{gsav:1}-\eqref{gsav:3}  is 
\begin{eqnarray}
&&\frac{\phi^{n+1}-\phi^n}{\delta t}=-\mathcal{G}\mu^{n+1},\label{scheme:gsav:1}\\
&&\mu^{n+1}=\mathcal{L}\phi^{n+1}+\frac{r^{n+1}}{G(\int_{\Omega}F(\phi^n)d\bx)}F'(\phi^n)\label{scheme:gsav:2},\\
&&\frac{(G^{-1}(r))^{n+1}-(G^{-1}(r))^n}{\delta t}=\frac{r^{n+1}}{G(\int_{\Omega}F(\phi^n)d\bx)}(F'(\phi^n),\frac{\phi^{n+1}-\phi^n}{\delta t}).\label{scheme:gsav:3}
\end{eqnarray}
Taking the inner products of \eqref{scheme:gsav:1} with $\mu^{n+1}$ and of \eqref{scheme:gsav:2} with $-\frac{\phi^{n+1}-\phi^n}{\delta t}$,  summing up the results and taking into account \eqref{scheme:gsav:3},  we have the following:
\begin{thm}\label{stable:1}
 The  scheme \eqref{scheme:gsav:1}-\eqref{scheme:gsav:3} is unconditionally energy stable in the sense that
 $$\tilde E(\phi^{n+1})-\tilde E(\phi^{n}) \le -\Delta t(\mathcal{G}\mu^{n+1},\mu^{n+1}),$$
 where $\tilde E(\phi^k)=\int_\Omega \frac12\mathcal{L}\phi^k\cdot \phi^k d\bx +(G^{-1}(r))^k$.
\end{thm}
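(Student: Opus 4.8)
The plan is to reproduce the continuous energy-law computation at the discrete level, following exactly the testing strategy that the statement already advertises. First I would take the $L^2$ inner product of \eqref{scheme:gsav:1} with $\mu^{n+1}$, which gives $\left(\frac{\phi^{n+1}-\phi^n}{\delta t},\mu^{n+1}\right)=-(\mathcal{G}\mu^{n+1},\mu^{n+1})$, and separately pair \eqref{scheme:gsav:2} with $-\frac{\phi^{n+1}-\phi^n}{\delta t}$. Upon adding the two identities, the two copies of $\left(\mu^{n+1},\frac{\phi^{n+1}-\phi^n}{\delta t}\right)$ cancel by symmetry of the inner product, leaving a relation whose surviving terms are the dissipation $-(\mathcal{G}\mu^{n+1},\mu^{n+1})$, the linear contribution $-\left(\mathcal{L}\phi^{n+1},\frac{\phi^{n+1}-\phi^n}{\delta t}\right)$, and the nonlinear contribution $-\frac{r^{n+1}}{G(\int_\Omega F(\phi^n)\,d\bx)}\left(F'(\phi^n),\frac{\phi^{n+1}-\phi^n}{\delta t}\right)$.

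The crucial step is to observe that this nonlinear term is precisely what the auxiliary constraint \eqref{scheme:gsav:3} was engineered to absorb: by that equation it equals $\frac{(G^{-1}(r))^{n+1}-(G^{-1}(r))^n}{\delta t}$. Substituting and multiplying through by $\delta t$ yields the exact identity
\[
\left(\mathcal{L}\phi^{n+1},\phi^{n+1}-\phi^n\right)+\big[(G^{-1}(r))^{n+1}-(G^{-1}(r))^n\big]=-\delta t\,(\mathcal{G}\mu^{n+1},\mu^{n+1}).
\]
At this stage everything is an equality; the inequality in the theorem will emerge only from how the quadratic linear term is handled.

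For the linear term I would invoke that $\mathcal{L}$ is a self-adjoint positive operator and apply the polarization identity
\[
\left(\mathcal{L}\phi^{n+1},\phi^{n+1}-\phi^n\right)=\frac12(\mathcal{L}\phi^{n+1},\phi^{n+1})-\frac12(\mathcal{L}\phi^n,\phi^n)+\frac12(\mathcal{L}(\phi^{n+1}-\phi^n),\phi^{n+1}-\phi^n).
\]
Since $\mathcal{L}$ is positive the last summand is nonnegative, so $\left(\mathcal{L}\phi^{n+1},\phi^{n+1}-\phi^n\right)\ge\frac12(\mathcal{L}\phi^{n+1},\phi^{n+1})-\frac12(\mathcal{L}\phi^n,\phi^n)$. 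Inserting this lower bound into the displayed identity, identifying $\frac12(\mathcal{L}\phi^k,\phi^k)+(G^{-1}(r))^k$ with $\tilde E(\phi^k)$, and using positivity of $\mathcal{G}$ on the right-hand side, produces $\tilde E(\phi^{n+1})-\tilde E(\phi^n)\le-\delta t\,(\mathcal{G}\mu^{n+1},\mu^{n+1})$, which is the asserted bound (with $\Delta t=\delta t$).

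I do not anticipate a genuine obstacle: the design of \eqref{scheme:gsav:3} is tailored precisely so that the otherwise troublesome nonlinear term telescopes into a difference of $G^{-1}(r)$, and this single conceptual point is exactly what makes the estimate go through for an arbitrary invertible $G$ rather than only the square-root choice. The only hypotheses I would want to record explicitly are the self-adjointness of $\mathcal{L}$ (needed for the polarization identity to close without a residual cross term) together with the positivity of both $\mathcal{L}$ and $\mathcal{G}$; granting these, every remaining manipulation is routine algebra, and notably no smoothness, convexity, or lower-boundedness assumption on $F$ or $G$ is invoked at this discrete level.
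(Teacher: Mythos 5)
Your argument is correct and follows exactly the route the paper itself indicates: testing \eqref{scheme:gsav:1} with $\mu^{n+1}$ and \eqref{scheme:gsav:2} with $-\frac{\phi^{n+1}-\phi^n}{\delta t}$, letting \eqref{scheme:gsav:3} absorb the nonlinear term into the telescoping difference of $G^{-1}(r)$, and closing with the polarization identity and positivity of $\mathcal{L}$ and $\mathcal{G}$. You have merely written out in full the details the paper compresses into one sentence, so there is nothing to add or correct.
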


Only the special case $G=x^{\frac 12}$,  we can derive linear,  no-iterative, original SAV schemes for system \eqref{sav:1}-\eqref{sav:3} in Remark \ref{sav:re}. Usually scheme \eqref{scheme:gsav:1}-\eqref{scheme:gsav:3}  is nonlinear and below we introduce how to solve it efficiently.
Setting $\xi^{n+1}=\frac{r^{n+1}}{G(\int_{\Omega}F(\phi^n)d\bx)}$ and writing 
\begin{equation}\label{sol}
\phi^{n+1}=\phi_1^{n+1}+\xi^{n+1}\phi_2^{n+1},\; \mu^{n+1}=\mu_1^{n+1}+\xi^{n+1}\mu_2^{n+1},\; 
 \end{equation}
in the above, we find that $(\phi_i^{n+1},\mu_i^{n+1})\;(i=1,2)$ can be determined  as follows:
\begin{eqnarray}
&&\frac{\phi_1^{n+1}-\phi^n}{\delta t}=-\mathcal{G}\mu_1^{n+1},\label{sav:1c}\\
&&\mu_1^{n+1}=\mathcal{L}\phi_1^{n+1},\label{sav:2c}
\end{eqnarray}
and
\begin{eqnarray}
&&\frac{\phi_2^{n+1}}{\delta t}=-\mathcal{G}\mu_2^{n+1},\label{sav:1d}\\
&&\mu_2^{n+1}=\mathcal{L}\phi_2^{n+1}+F'(\phi^n).\label{sav:2d}
\end{eqnarray}
Once  $(\phi_1,\phi_2)$ are solved, we plug $\phi^{n+1}=\phi_1^{n+1}+\xi^{n+1}\phi_2^{n+1}$ and $r^{n+1}=\xi^{n+1}G(\int_{\Omega}F(\phi^n)d\bx)$ into equation \eqref{scheme:gsav:3} to obtain $\xi^{n+1}$ by solving a nonlinear algebraic equation where Newton iterator solver with initial guess $(\xi^{n+1})^0=1$ should be implemented.
Since $\xi^{n+1}$ is a first-order approximation for $1$.  Finally solution $\phi^{n+1}$ can be updated by equation \eqref{sol}.

In summary,  we can then determine solution $\phi^{n+1}$ for scheme \eqref{scheme:gsav:1}-\eqref{scheme:gsav:3}  as follows:
 \begin{itemize}
\item  Solve  linear constant coefficient equations  \eqref{sav:1c} and  \eqref{sav:2c}  to obtain $\phi_{1}^{n+1}$,  equations  \eqref{sav:1d}   and  \eqref{sav:2d} to obtain $\phi_{2}^{n+1}$;
\item  Solve $\xi^{n+1}$  from  equations \eqref{scheme:gsav:3}  by plugging $\phi^{n+1}$ and $r^{n+1}$ into  it;
\item  Update $\phi^{n+1}$ from \eqref{sol}. 
\end{itemize}

\subsubsection{Square root SAV approach and Original SAV approach}
For example,  if we adopt  invertible function $G=x^{\frac 12}$, then $(G^{-1})=x^2$ and $(G^{-1})'(r) r_t=2rr_t$, then equations \eqref{gsav:1}-\eqref{gsav:3} are reformulated as 
 \begin{eqnarray}
&&\partial_t\phi=-\mathcal{G}\mu,\label{osav:1}\\
&&\mu=\mathcal{L}\phi+\frac{r}{G(\int_{\Omega}F(\phi)d\bx)}F'(\phi)\label{osav:2},\\
&&\frac{r}{G(\int_{\Omega}F(\phi)d\bx)}(F'(\phi),\phi_t)=2rr_t=\frac{d}{dt}r^2 .\label{osav:3}
\end{eqnarray}

Then, a first-order SAV scheme for the above system is
\begin{eqnarray}
&&\frac{\phi^{n+1}-\phi^n}{\delta t}=-\mathcal{G}\mu^{n+1},\label{linear:osav:1b}\\
&&\mu^{n+1}=\mathcal{L}\phi^{n+1}+\frac{r^{n+1}}{\sqrt{\int_{\Omega}F(\phi^n)d\bx+C_0}}F'(\phi^n),\label{linear:osav:2b}\\
&&\frac{(r^2)^{n+1}-(r^2)^n}{\delta t}\nonumber\\&&=\frac{r^{n+1}}{\sqrt{\int_{\Omega}F(\phi^n)d\bx+C_0}}({F'(\phi^n)},\frac{\phi^{n+1}-\phi^n}{\delta t}).\label{linear:osav:3b}
\end{eqnarray}
Taking the inner products of \eqref{linear:osav:1b} with $\mu^{n+1}$ and of \eqref{linear:osav:2b} with $-\frac{\phi^{n+1}-\phi^n}{\delta t}$,  summing up the results and taking into account \eqref{linear:osav:3b},  we have the following:
\begin{thm}
 The  scheme \eqref{linear:osav:1b}-\eqref{linear:osav:3b} is unconditionally energy stable in the sense that
 $$\tilde E(\phi^{n+1})-\tilde E(\phi^{n}) \le -\Delta t(\mathcal{G}\mu^{n+1},\mu^{n+1}),$$
 where $\tilde E(\phi^k)=\int_\Omega \frac12\mathcal{L}\phi^k\cdot \phi^k d\bx +(r^k)^2$.
\end{thm}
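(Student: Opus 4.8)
The plan is to reproduce the continuous energy-dissipation computation at the discrete level, taking advantage of the fact that for $G=x^{1/2}$ one has $G^{-1}(r)=r^2$, so the auxiliary contribution appearing in $\tilde E$ is literally $(r^k)^2$. First I would pair each equation of the scheme with the appropriate test function: take the $L^2$ inner product of \eqref{linear:osav:1b} with $\mu^{n+1}$, giving $(\frac{\phi^{n+1}-\phi^n}{\delta t},\mu^{n+1})=-(\mathcal{G}\mu^{n+1},\mu^{n+1})$, and take the inner product of \eqref{linear:osav:2b} with $-\frac{\phi^{n+1}-\phi^n}{\delta t}$. Adding the two identities, the two occurrences of $(\mu^{n+1},\frac{\phi^{n+1}-\phi^n}{\delta t})$ cancel by symmetry of the $L^2$ inner product, leaving
\[
0=-(\mathcal{G}\mu^{n+1},\mu^{n+1})-\Big(\mathcal{L}\phi^{n+1},\frac{\phi^{n+1}-\phi^n}{\delta t}\Big)-\frac{r^{n+1}}{\sqrt{\int_\Omega F(\phi^n)\,d\bx+C_0}}\Big(F'(\phi^n),\frac{\phi^{n+1}-\phi^n}{\delta t}\Big).
\]

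Second, I would eliminate the nonlinear-potential term using the discrete auxiliary relation \eqref{linear:osav:3b}, which is engineered precisely so that the last term above equals the exact time difference $\frac{(r^2)^{n+1}-(r^2)^n}{\delta t}$. This is the essential structural point of the square-root choice: the left-hand side of \eqref{linear:osav:3b} is a genuine discrete derivative of $r^2$, so the nonlinear term is matched \emph{exactly} to the $(r^k)^2$ piece of the modified energy with no remainder.

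Third, for the linear stiff term I would invoke the positivity of $\mathcal{L}$ (together with its symmetry, which I would state as a standing assumption) through the polarization identity
\[
\Big(\mathcal{L}\phi^{n+1},\phi^{n+1}-\phi^n\Big)=\frac12\Big[(\mathcal{L}\phi^{n+1},\phi^{n+1})-(\mathcal{L}\phi^n,\phi^n)+(\mathcal{L}(\phi^{n+1}-\phi^n),\phi^{n+1}-\phi^n)\Big].
\]
Since the last quadratic term is nonnegative it may be discarded, which is exactly where the equality becomes the desired inequality. Substituting the two preceding steps, multiplying through by $\delta t>0$, and recognizing $\tilde E(\phi^k)=\frac12(\mathcal{L}\phi^k,\phi^k)+(r^k)^2$ then yields $\tilde E(\phi^{n+1})-\tilde E(\phi^{n})\le-\Delta t(\mathcal{G}\mu^{n+1},\mu^{n+1})$.

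I do not expect a genuine analytic obstacle: the argument is a purely algebraic discrete energy estimate. The only places demanding care are the bookkeeping of signs, so that the dissipation term carries the correct minus sign, and the direction of the inequality, which hinges entirely on discarding the nonnegative term $(\mathcal{L}(\phi^{n+1}-\phi^n),\phi^{n+1}-\phi^n)\ge0$. If $\mathcal{L}$ were only positive but not symmetric the polarization identity would fail, so I would make the symmetry hypothesis on $\mathcal{L}$ explicit before carrying out the estimate.
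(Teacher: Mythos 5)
Your proposal is correct and follows exactly the route the paper itself uses (stated in the sentence preceding the theorem): test \eqref{linear:osav:1b} with $\mu^{n+1}$ and \eqref{linear:osav:2b} with $-\frac{\phi^{n+1}-\phi^n}{\delta t}$, eliminate the nonlinear term via \eqref{linear:osav:3b}, and handle the $\mathcal{L}$-term with the identity $\bigl(\mathcal{L}a,a-b\bigr)=\frac12\bigl[(\mathcal{L}a,a)-(\mathcal{L}b,b)+(\mathcal{L}(a-b),a-b)\bigr]$, discarding the nonnegative remainder. Your explicit remark that symmetry of $\mathcal{L}$ is needed for this identity is a detail the paper leaves implicit, but it is not a different argument.
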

\begin{remark}\label{sav:re}
 If we can cancel $r$ on both sides from equation \eqref{osav:3} and obtain
\begin{eqnarray}
&&\partial_t\phi=-\mathcal{G}\mu,\label{sav:1}\\
&&\mu=\mathcal{L}\phi+\frac{r}{G(\int_{\Omega}F(\phi)d\bx)}F'(\phi)\label{sav:2},\\
&&r_t=\frac{1}{2}\frac{1}{G(\int_{\Omega}F(\phi)d\bx)}(F'(\phi),\phi_t). \label{sav:3}
\end{eqnarray}
Notice that $G=x^{\frac 12}$,  then it is observed that  the system \eqref{sav:1}-\eqref{sav:3} is exactly  the original SAV approach in \cite{SXY19,shen2018scalar}. By treating $r$ implicitly, a large class of linear numerical schemes have been  proposed in \cite{SXY19,shen2018scalar,cheng2019highly,jing2019second}.
\end{remark}
\subsubsection{The mapped $\tanh$ SAV approach}
In order to  make  $r=\tanh(\int_{\Omega}F(\phi)d\bx)$ not too close to $\pm 1$ numerically since $\tanh^{-1}(r=\pm 1 ) \rightarrow \infty$.  Then  we consider a mapped function  $G=\tanh(\frac{x}{C})$, then $G^{-1}=C\tanh^{-1}(x)$, and $C$ is a large positive constant.  We define a new variable $r=\tanh(\frac{\int_{\Omega}F(\phi)d\bx}{C})$ ,   then equations \eqref{gsav:1}-\eqref{gsav:3} are reformulated as 
 \begin{eqnarray}
&&\partial_t\phi=-\mathcal{G}\mu,\label{tanh:sav:1}\\
&&\mu=\mathcal{L}\phi+\frac{r}{\tanh(\frac{\int_{\Omega}F(\phi)d\bx}{C})}F'(\phi)\label{tanh:sav:2},\\
&&\frac{d}{dt}(C\tanh^{-1}(r))=\frac{r}{\tanh(\frac{\int_{\Omega}F(\phi)d\bx}{C})}(F'(\phi),\phi_t).\label{tanh:sav:3}
\end{eqnarray}

Then, a first-order SAV scheme for the above system is
\begin{eqnarray}
&&\frac{\phi^{n+1}-\phi^n}{\delta t}=-\mathcal{G}\mu^{n+1},\label{linear:tansav:1b}\\
&&\mu^{n+1}=\mathcal{L}\phi^{n+1}+\frac{r^{n+1}}{\tanh(\frac{\int_{\Omega}F(\phi^n)d\bx}{C})}F'(\phi^n),\label{linear:tansav:2b}\\
&&\frac{(C\tanh^{-1}(r))^{n+1}-(C\tanh^{-1}(r))^n}{\delta t}\nonumber\\&&=\frac{r^{n+1}}{\tanh(\frac{\int_{\Omega}F(\phi^n)d\bx}{C})}({F'(\phi^n)},\frac{\phi^{n+1}-\phi^n}{\delta t}).\label{linear:tansav:3b}
\end{eqnarray} 
\begin{remark}
The $\tanh$-SAV scheme \eqref{linear:tansav:1b}-\eqref{linear:tansav:3b} takes a  big advantage of  dealing with nonlinear potential $F(\phi^n)$ which is  not bounded up and below or the singular potential.  For example  the logarithmic (singular) functions: $F=\alpha \phi +\beta\ln(\frac{1+\phi}{1-\phi})$, $F=\phi+(1-\phi)\log(1-\phi)$ \cite{israel2012long,cherfils2011cahn} and  MBE model without slope selection \cite{cheng2019highly}. Because the domain of function  $\tanh$ is $(-\infty,\infty)$ while the range  is $(-1,1)$.

\end{remark}

Taking the inner products of \eqref{linear:tansav:1b} with $\mu^{n+1}$ and of \eqref{linear:tansav:2b} with $-\frac{\phi^{n+1}-\phi^n}{\delta t}$,  summing up the results and taking into account \eqref{linear:tansav:3b},  we have the following:
\begin{thm}
 The  scheme \eqref{linear:tansav:1b}-\eqref{linear:tansav:3b} is unconditionally energy stable in the sense that
 $$\tilde E(\phi^{n+1})-\tilde E(\phi^{n}) \le -\Delta t(\mathcal{G}\mu^{n+1},\mu^{n+1}),$$
 where $\tilde E(\phi^k)=\int_\Omega \frac12\mathcal{L}\phi^k\cdot \phi^k d\bx +(\tanh^{-1}(r))^k$.
\end{thm}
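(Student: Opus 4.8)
The plan is to reproduce, at the discrete level, the continuous energy dissipation argument sketched just before the statement. First I would take the $L^2$ inner product of \eqref{linear:tansav:1b} with $\mu^{n+1}$, which gives $\frac{1}{\delta t}(\phi^{n+1}-\phi^n,\mu^{n+1}) = -(\mathcal{G}\mu^{n+1},\mu^{n+1})$. Next I would pair \eqref{linear:tansav:2b} with $-\frac{\phi^{n+1}-\phi^n}{\delta t}$, producing $-\frac{1}{\delta t}(\mu^{n+1},\phi^{n+1}-\phi^n)$ on the left and, on the right, the quadratic term $-\frac{1}{\delta t}(\mathcal{L}\phi^{n+1},\phi^{n+1}-\phi^n)$ together with the nonlinear coupling term carrying the prefactor $\frac{r^{n+1}}{\tanh(\int_\Omega F(\phi^n)\,d\bx/C)}$. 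Adding the two identities, the mixed terms $\pm\frac{1}{\delta t}(\mu^{n+1},\phi^{n+1}-\phi^n)$ cancel exactly; engineering this cancellation is precisely the reason for testing with $\mu^{n+1}$ and with the discrete time derivative of $\phi$, respectively.

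The second step is to remove the nonlinear coupling term using the auxiliary-variable update \eqref{linear:tansav:3b}, which is constructed so that $\frac{r^{n+1}}{\tanh(\int_\Omega F(\phi^n)\,d\bx/C)}\bigl(F'(\phi^n),\frac{\phi^{n+1}-\phi^n}{\delta t}\bigr)$ equals the exact forward difference $\frac{(C\tanh^{-1}(r))^{n+1}-(C\tanh^{-1}(r))^n}{\delta t}$. Substituting this turns the awkward nonlinear contribution into a clean telescoping increment of $G^{-1}(r)=C\tanh^{-1}(r)$. The essential observation is that this step is an \emph{equality}, not an inequality, and uses no sign or boundedness information about $F$ or about $\tanh^{-1}(r)$, which is exactly what lets the generalized framework dispense with the square-root structure of classical SAV.

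The only place an inequality enters is the quadratic operator term. Assuming $\mathcal{L}$ is self-adjoint and positive, as in the derivation of the continuous dissipation law, I would invoke the polarization identity $2(\mathcal{L}\phi^{n+1},\phi^{n+1}-\phi^n) = (\mathcal{L}\phi^{n+1},\phi^{n+1}) - (\mathcal{L}\phi^n,\phi^n) + (\mathcal{L}(\phi^{n+1}-\phi^n),\phi^{n+1}-\phi^n)$ and discard the last term, which is nonnegative by positivity of $\mathcal{L}$. This gives $(\mathcal{L}\phi^{n+1},\phi^{n+1}-\phi^n)\ge \frac12(\mathcal{L}\phi^{n+1},\phi^{n+1}) - \frac12(\mathcal{L}\phi^n,\phi^n)$. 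Collecting the telescoping $\mathcal{L}$-contribution, the telescoping $C\tanh^{-1}(r)$-contribution, and the term $-\delta t(\mathcal{G}\mu^{n+1},\mu^{n+1})$ that is nonpositive by positivity of $\mathcal{G}$, I arrive at $\tilde E(\phi^{n+1})-\tilde E(\phi^{n})\le -\delta t(\mathcal{G}\mu^{n+1},\mu^{n+1})$.

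I do not anticipate a serious obstacle, since this is a routine discrete energy estimate of the same shape as Theorem \ref{stable:1}. The one point requiring care is confirming the self-adjointness of $\mathcal{L}$, on which the polarization identity relies: without symmetry the cross term $(\mathcal{L}\phi^{n+1},\phi^n)$ would not match $(\mathcal{L}\phi^n,\phi^{n+1})$ and the telescoping of the quadratic energy would break down. The conceptual heart of the argument is not any computation but the second step, where \eqref{linear:tansav:3b} delivers the exact increment of $C\tanh^{-1}(r)$, so that stability follows for the invertible map $G=\tanh(\cdot/C)$ by the identical mechanism used for $G=\sqrt{\,\cdot\,}$.
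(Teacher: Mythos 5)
Your proposal is correct and follows exactly the route the paper indicates (testing \eqref{linear:tansav:1b} with $\mu^{n+1}$, testing \eqref{linear:tansav:2b} with $-\frac{\phi^{n+1}-\phi^n}{\delta t}$, cancelling the cross terms, invoking \eqref{linear:tansav:3b} as an exact identity for the increment of $C\tanh^{-1}(r)$, and using the polarization identity with positivity of $\mathcal{L}$ and $\mathcal{G}$); the paper merely states this recipe in one sentence, and you have filled in the details faithfully. The only caveat is that the telescoping quantity is $C\tanh^{-1}(r^k)$, so the modified energy as printed in the theorem (which omits the factor $C$) appears to contain a typo that your argument correctly exposes.
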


\subsection{Schemes  based on the MG-SAV approach with multiple  nonlinear potentials}

If we consider gradient flows with multiple nonlinear potentials,  where the total free energy is 
\begin{equation}\label{m:energy}
 E(\phi)=\int_\Omega \frac12\mathcal{L}\phi\cdot \phi +\sum\limits_{i=1}^mF_i(\phi) d\bx,
\end{equation}
then the multiple generalized scalar auxiliary variable approach (MG-SAV) may achieve better accuracy for numerical simulation  which has been observed in  \cite{cheng2018multiple}.  The gradient flow with multiple nonlinear potentials is formulated as the following form
\begin{equation}\label{m:grad:flow}
\begin{split}
&\phi_t=-\mathcal{G}\mu,\\
&\mu=\mathcal{L}\phi+\sum\limits_{i=1}^mF'_i(\phi), 
\end{split}
\end{equation}
where $F'_i(\phi)$ is the variational derivative of $F_i$.  By using the MG-SAV approach, 
we can also rewrite the energy \eqref{m:energy} as 
\begin{equation}\label{m:mod:energy}
 E(\phi)=\int_\Omega \frac12\mathcal{L}\phi\cdot \phi d\bx +\sum\limits_{i=1}^m G_i^{-1}\{G_i(\int_{\Omega}F_i(\phi)d\bx)\},
\end{equation}
where $G_i$ are various invertible functions for $i=1,2,\cdots,m$.
Setting new variables to be 
\begin{equation}
r_i=G_i(\int_{\Omega}F_i(\phi)d\bx).
\end{equation}
Now we shall derive an equivalent form 
\begin{eqnarray}
&&\partial_t\phi=-\mathcal{G}\mu,\label{mgsav:1}\\
&&\mu=\mathcal{L}\phi+\sum\limits_{i=1}^m\frac{r_i}{G_i(\int_{\Omega}F_i(\phi)d\bx)}F_i'(\phi)\label{mgsav:2},\\
&&\frac{d}{dt}G_i^{-1}(r_i) =\frac{r_i}{G_i(\int_{\Omega}F_i(\phi)d\bx)}(F_i'(\phi),\phi_t).\label{mgsav:3}
\end{eqnarray}
Then a second-order  BDF2  scheme can be constructed for equations \eqref{mgsav:1}-\eqref{mgsav:3}
\begin{eqnarray}
&&\frac{3\phi^{n+1}-4\phi^n+\phi^{n-1}}{2\delta t}=-\mathcal{G}\mu^{n+1},\label{scheme:mgsav:1}\\
&&\mu^{n+1}=\mathcal{L}\phi^{n+1}+\sum\limits_{i=1}^m\frac{r_i^{n+1}}{G_i(\int_{\Omega}F_i(\phi^{\dagger,n})d\bx)}F_i'(\phi^{\dagger,n})\label{scheme:mgsav:2},\\
&&\frac{3G_i^{-1}(r_i^{n+1}) -4G_i^{-1}(r_i^{n})+G_i^{-1}(r_i^{n-1})}{2\delta t} \nonumber\\&&\hskip 1cm =\frac{r_i^{n+1}}{G_i(\int_{\Omega}F_i(\phi^{\dagger,n})d\bx)}(F_i'(\phi^{\dagger,n}),\frac{3\phi^{n+1}-4\phi^n+\phi^{n-1}}{2\delta t}).\label{scheme:mgsav:3}
\end{eqnarray}
Where $G_i$ with  $i=1,2,\cdots ,m$ are invertible functions and $g^{\dagger,n}= 2g^n-g^{n-1}$  for any sequence $\{g^n\}$. 

Taking inner product of equation \eqref{scheme:mgsav:1} with $\mu^{n+1}$, of  equation \eqref{scheme:mgsav:2} with $\frac{3\phi^{n+1}-4\phi^n+\phi^{n-1}}{\delta t}$ and combining with equations \eqref{scheme:mgsav:3},  we derive the following energy dissipative law for scheme \eqref{scheme:mgsav:1}-\eqref{scheme:mgsav:3}.

\begin{thm}
 The  scheme \eqref{scheme:mgsav:1}-\eqref{scheme:mgsav:3} is unconditionally energy stable in the sense that
 $$\tilde E(\phi^{n+1})-\tilde E(\phi^{n}) \le -\delta t(\mathcal{G}\mu^{n+1},\mu^{n+1}),$$
 where $\tilde E(\phi^k)=\int_\Omega \frac14(\mathcal{L}\phi^k\cdot \phi^k +\mathcal{L}(2\phi^k-\phi^{k-1})\cdot (2\phi^k-\phi^{k-1}))d\bx+ \sum\limits_{i=1}^m (\frac 32G_i^{-1}(r_i^k)-\frac 12G_i^{-1}(r_i^{k-1}))$.
\end{thm}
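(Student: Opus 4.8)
The plan is to mimic the energy argument used for the first-order scheme, now adapted to the three-level BDF2 stencil. First I would test \eqref{scheme:mgsav:1} with $\mu^{n+1}$, test \eqref{scheme:mgsav:2} with $-\frac{3\phi^{n+1}-4\phi^n+\phi^{n-1}}{2\delta t}$, and sum \eqref{scheme:mgsav:3} over $i=1,\dots,m$. Upon adding the three resulting identities, the pairings against $\mu^{n+1}$ cancel between the first two, and the nonlinear couplings $\frac{r_i^{n+1}}{G_i(\int_\Omega F_i(\phi^{\dagger,n})\,d\bx)}\big(F_i'(\phi^{\dagger,n}),\cdot\big)$ cancel between the last two. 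After multiplying through by $\delta t$, what survives is the exact balance
\begin{equation*}
\tfrac12\big(\mathcal{L}\phi^{n+1},\,3\phi^{n+1}-4\phi^n+\phi^{n-1}\big)+\sum_{i=1}^m\tfrac12\big(3G_i^{-1}(r_i^{n+1})-4G_i^{-1}(r_i^{n})+G_i^{-1}(r_i^{n-1})\big)=-\delta t\,(\mathcal{G}\mu^{n+1},\mu^{n+1}).
\end{equation*}
It then remains only to recognize each term on the left as the increment of the corresponding piece of $\tilde E$, up to a nonnegative remainder.

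For the linear part I would invoke the standard BDF2 identity
\begin{equation*}
2\,a\,(3a-4b+c)=|a|^2+|2a-b|^2+|a-2b+c|^2-|b|^2-|2b-c|^2,
\end{equation*}
used in the $\mathcal{L}$-weighted inner product with $a=\phi^{n+1}$, $b=\phi^n$, $c=\phi^{n-1}$; since $\mathcal{L}$ is positive and self-adjoint, this is legitimate and every quadratic form appearing is nonnegative. Writing $s_i^k=G_i^{-1}(r_i^k)$, this expresses $\tfrac12(\mathcal{L}\phi^{n+1},3\phi^{n+1}-4\phi^n+\phi^{n-1})$ as
\begin{align*}
&\Big[\tfrac14(\mathcal{L}\phi^{n+1},\phi^{n+1})+\tfrac14(\mathcal{L}(2\phi^{n+1}-\phi^n),2\phi^{n+1}-\phi^n)\Big]\\
&\quad-\Big[\tfrac14(\mathcal{L}\phi^{n},\phi^{n})+\tfrac14(\mathcal{L}(2\phi^{n}-\phi^{n-1}),2\phi^{n}-\phi^{n-1})\Big]\\
&\quad+\tfrac14\big(\mathcal{L}(\phi^{n+1}-2\phi^n+\phi^{n-1}),\,\phi^{n+1}-2\phi^n+\phi^{n-1}\big),
\end{align*}
i.e.\ exactly the difference of the $\mathcal{L}$-part of $\tilde E$ between levels $n+1$ and $n$, plus the nonnegative term on the last line which I will simply discard.

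For the auxiliary-variable part no operator is involved, so the manipulation is purely algebraic. Setting $H_i^k=\tfrac32 s_i^k-\tfrac12 s_i^{k-1}$, a one-line computation gives $H_i^{n+1}-H_i^{n}=\tfrac12\big(3s_i^{n+1}-4s_i^{n}+s_i^{n-1}\big)$, so $\sum_i\tfrac12(3s_i^{n+1}-4s_i^n+s_i^{n-1})$ telescopes \emph{exactly}, with no remainder, into the difference of the $G_i^{-1}$-part of $\tilde E$. Combining the two parts, the balance becomes
\begin{equation*}
\tilde E(\phi^{n+1})-\tilde E(\phi^{n})+\tfrac14\big(\mathcal{L}(\phi^{n+1}-2\phi^n+\phi^{n-1}),\,\phi^{n+1}-2\phi^n+\phi^{n-1}\big)=-\delta t\,(\mathcal{G}\mu^{n+1},\mu^{n+1}),
\end{equation*}
and dropping the nonnegative second term on the left yields the asserted inequality, while positivity of $\mathcal{G}$ makes the right-hand side nonpositive.

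The only genuinely delicate point is the bookkeeping of signs and of the factor $2\delta t$ when selecting the test multiplier for \eqref{scheme:mgsav:2}: one must pair with $-\frac{3\phi^{n+1}-4\phi^n+\phi^{n-1}}{2\delta t}$ so that the $\mu^{n+1}$-pairings cancel against the first identity and the nonlinear terms cancel against the summed \eqref{scheme:mgsav:3}. Everything else reduces to the clean BDF2 identity for the $\mathcal{L}$-energy and a trivial telescoping for the auxiliary variables. Crucially, and unlike any analysis one would attempt through $(G_i^{-1})'$, the argument never differentiates or Taylor-expands $G_i^{-1}$, so it is completely insensitive to the choice of invertible $G_i$ --- which is precisely the mechanism that makes the G-SAV framework work at second order.
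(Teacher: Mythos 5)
Your proposal is correct and follows exactly the route the paper sketches (test \eqref{scheme:mgsav:1} with $\mu^{n+1}$, test \eqref{scheme:mgsav:2} with the scaled BDF2 difference quotient, sum \eqref{scheme:mgsav:3} over $i$, then apply the standard BDF2 telescoping identity for the $\mathcal{L}$-part and the exact telescoping $\frac32 s^{n+1}-\frac12 s^{n}$ for the auxiliary variables); the paper itself only states this recipe in one sentence, and your write-up supplies the details correctly, including fixing the paper's sloppy choice of test multiplier. No gaps.
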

Below we show how to solve the scheme \eqref{scheme:mgsav:1}-\eqref{scheme:mgsav:3}.
Define new variables 
\begin{equation}\label{m:xi}
\xi_i^{n+1}=\frac{r_i^{n+1}}{G_i(\int_{\Omega}F_i(\phi^{\dagger,n})d\bx)},
\end{equation} 
for $i=1,2,\cdots,m$, then $\phi^{n+1}$ and $\mu^{n+1}$ can be expressed as 
\begin{equation}\label{m:sol}
\phi^{n+1}=\phi_1^{n+1}+\sum\limits_{i=1}^m\xi_i^{n+1}\phi_{i,2}^{n+1},
\end{equation}
 and  
\begin{equation}
\mu^{n+1}=\mu_1^{n+1}+\sum\limits_{i=1}^m\xi_i^{n+1}\mu_{i,2}^{n+1}. 
\end{equation}  
Where  $(\phi_1^{n+1},\mu_1^{n+1})$ and $(\phi_{i,2}^{n+1},\mu_{i,2}^{n+1})$  can be determined as follows:
\begin{eqnarray}
&&\frac{3\phi_1^{n+1}-4\phi^n+\phi^{n-1}}{2\delta t}=-\mathcal{G}\mu_1^{n+1},\label{m:solution:1}\\
&&\mu_1^{n+1}=\mathcal{L}\phi_1^{n+1},\label{m:solution:2}
\end{eqnarray}
and 
\begin{eqnarray}
&&\frac{3\phi_{i,2}^{n+1}-4\phi^n+\phi^{n-1}}{2\delta t}=-\mathcal{G}\mu_{i,2}^{n+1},\label{m:solution:3}\\
&&\mu_{i,2}^{n+1}=\mathcal{L}\phi_{i,2}^{n+1} +  F_i'(\phi^{\dagger,n})\label{m:solution:4}. 
\end{eqnarray}
Once $(\phi_1^{n+1},\mu_1^{n+1})$ and $(\phi_{i,2}^{n+1},\mu_{i,2}^{n+1})$  are known, we plug equations \eqref{m:xi} and \eqref{m:sol} into equations \eqref{scheme:mgsav:3} to obtain a coupled  $m\times m$ nonlinear algebraic system of  $\xi^{n+1}_i$ for $i=1,2,\cdots,m$. Finally, we update $\phi^{n+1}$ from \eqref{m:sol}. 

In summary,  we can then determine solution $\phi^{n+1}$ for scheme \eqref{scheme:mgsav:1}-\eqref{scheme:mgsav:3}  as follows:
 \begin{itemize}
\item  Solve  linear constant coefficient equation  from equation  \eqref{m:solution:1} and equation \eqref{m:solution:2}  to obtain $\phi_{1}^{n+1}$,  equation  \eqref{m:solution:3}   and equation \eqref{m:solution:4} to obtain $\phi_{i,2}^{n+1}$;
\item  Solve $\xi_i^{n+1}$ for $i=1,2,\cdots,m$ from  equation \eqref{scheme:mgsav:3}  by plugging equations \eqref{m:xi} and \eqref{m:sol} into  it;
\item  Update $\phi^{n+1}$ from \eqref{m:sol}. 
\end{itemize}

\section{Schemes  based on the G-SAV approach with multiple components}
In this section, we consider multiple-components phase field models  which  play an important role in describing  three material  components \cite{boyer2011numerical,yang2017numerical}.  We introduce $\phi_i$ $(i=1,2,\cdots,m)$ be the $i-th$ phase variables . 
A special form of  total free energy for multi-phase system  is formulated as
\begin{equation}\label{En:three}
E(\phi_1,\phi_2,\cdots,\phi_m)=\sum\limits_{i,j=1}^{m}\int_{\Omega}\mathcal{L}\phi_i\cdot \phi_jd\bx +\int_{\Omega}F(\phi_1,\phi_2,\cdots,\phi_m) d\bx,
\end{equation}
where $\mathcal{L}$ is a self-adjoint nonnegative linear operator, and the constant matrix $d_{ij}$ is symmetric positive definite.   $F(\phi_1,\phi_2,\cdots,\phi_m)$ is nonlinear potential. 
We derive the multi-components  phase field model  by taking  variational derivative  with respect to  \eqref{En:three},
\begin{equation}\label{three:CH}
\begin{split}
&\partial_t\phi_i=M\mathcal{G}\mu_i, \quad i=1,2,\cdots,m\\
&\mu_i=\mathcal{L}\phi_i +f_i,
\end{split}
\end{equation}
with some suitable boundary condition,  $M$ is mobility constant.  
\begin{thm}
The multi-components phase field  equation \eqref{three:CH} satisfies the following energy dissipative law:
\begin{equation}
\frac{d}{dt}E(\phi_1,\phi_2,\cdots,\phi_m)=-M\sum\limits_{i=1}^m(\mathcal{G}\mu_i,\mu_i).
\end{equation}
\end{thm}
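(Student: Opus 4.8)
The plan is to reproduce, component by component, the inner-product argument already used to derive the dissipation law below \eqref{grad:flow}. For each $i$ I would pair the evolution equation in \eqref{three:CH} with $\mu_i$ and the constitutive relation with $\partial_t\phi_i$, then sum over $i=1,\dots,m$. Explicitly, testing $\partial_t\phi_i=-M\mathcal{G}\mu_i$ against $\mu_i$ gives $(\partial_t\phi_i,\mu_i)=-M(\mathcal{G}\mu_i,\mu_i)$, while testing $\mu_i=\mathcal{L}\phi_i+f_i$ against $\partial_t\phi_i$ gives $(\mu_i,\partial_t\phi_i)=(\mathcal{L}\phi_i,\partial_t\phi_i)+(f_i,\partial_t\phi_i)$. (Here I read the sign in \eqref{three:CH} as $\partial_t\phi_i=-M\mathcal{G}\mu_i$, consistent with the single-component model \eqref{grad:flow} and with a decaying energy; with a $+$ sign the energy would increase.) Eliminating $\sum_i(\mu_i,\partial_t\phi_i)$ between the two families of identities reduces the claim to the chain-rule identity
\[
\frac{d}{dt}E(\phi_1,\dots,\phi_m)=\sum_{i=1}^m(\mathcal{L}\phi_i,\partial_t\phi_i)+\sum_{i=1}^m(f_i,\partial_t\phi_i)=\sum_{i=1}^m(\mu_i,\partial_t\phi_i).
\]

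The second sum is immediate: differentiating $\int_\Omega F(\phi_1,\dots,\phi_m)\,d\bx$ in time and using $f_i=\delta F/\delta\phi_i$ yields $\sum_i(f_i,\partial_t\phi_i)$ directly by the chain rule. Once the first sum is established, substituting the dynamics $\partial_t\phi_i=-M\mathcal{G}\mu_i$ into $\sum_i(\mu_i,\partial_t\phi_i)$ produces $-M\sum_i(\mathcal{G}\mu_i,\mu_i)$, which is exactly the asserted law; the positivity of $\mathcal{G}$ then makes the right-hand side nonpositive. Thus the whole statement rests on correctly differentiating the quadratic coupling term and identifying $\mu_i$ as the variational derivative of $E$.

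The main obstacle is precisely this quadratic term $\sum_{i,j}\int_\Omega \mathcal{L}\phi_i\cdot\phi_j\,d\bx$ (to be read with the symmetric positive-definite coefficient matrix $d_{ij}$ implicit in its definition). Differentiating it gives contributions of both the form $(\mathcal{L}\partial_t\phi_i,\phi_j)$ and $(\mathcal{L}\phi_i,\partial_t\phi_j)$; the crucial step is to invoke the self-adjointness of $\mathcal{L}$, so that $(\mathcal{L}\partial_t\phi_i,\phi_j)=(\partial_t\phi_i,\mathcal{L}\phi_j)$, together with the symmetry of $d_{ij}$, to merge the $(i,j)$ and $(j,i)$ contributions and collapse the double sum into $\sum_i(\mathcal{L}\phi_i,\partial_t\phi_i)$ (up to the coefficient matrix absorbed in $\mathcal{L}\phi_i$), thereby confirming that $\mu_i=\mathcal{L}\phi_i+f_i$ is indeed $\delta E/\delta\phi_i$. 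The integration by parts implicit in the self-adjointness identity is where the boundary conditions enter: under either the periodic or the homogeneous Neumann conditions the boundary terms vanish, so no spurious surface contributions survive. I expect this reconciliation of the cross-coupled energy with the stated chemical potential, rather than any single computation, to be the delicate point.
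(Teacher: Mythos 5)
Your proposal is correct and takes essentially the same route as the paper's own (very terse) proof: pair each evolution equation with $\mu_i$ and each constitutive relation with $\partial_t\phi_i$, integrate by parts, and sum over $i$. You in fact supply details the paper omits, notably the sign reading of \eqref{three:CH} needed for the energy to decay and the use of the self-adjointness of $\mathcal{L}$ together with the symmetry of $d_{ij}$ to collapse the quadratic cross terms into $\sum_i(\mathcal{L}\phi_i,\partial_t\phi_i)$.
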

\begin{proof}
Taking inner product of  equation \eqref{three:CH} with $\mu_i$ and $\partial_t\phi_i$ respectively for $i=1,2,\cdots,m$,  taking integration by parts and  summing up these three equalities.  The desired energy dissipative law is obtained.
\end{proof}
We rewrite energy \eqref{En:three} to the following form
\begin{equation}\label{mod:En:three}
E(\phi_1,\phi_2,\cdots,\phi_m)=\sum\limits_{i=1}^{m}\int_{\Omega}\mathcal{L}\phi_i\cdot \phi_id\bx  +G^{-1}\{G(\int_{\Omega}F(\phi_1,\phi_2,\cdots,\phi_m) d\bx)\}.
\end{equation}
Introducing a new SAV 
\begin{equation}
r(t)=G(\int_{\Omega}F(\phi_1,\phi_2,\cdots,\phi_m) d\bx),
\end{equation}
then the system \eqref{three:CH} is reformulated to be
\begin{equation}\label{mod:three:CH}
\begin{split}
&\partial_t\phi_i=M\mathcal{G}\mu_i, \quad i=1,2,\cdots,m\\
&\mu_i=\mathcal{L}\phi_i +\frac{r}{G(\int_{\Omega}F(\phi_1,\phi_2,\cdots,\phi_m)d\bx)}f_i,\\
&\frac{d}{dt}G^{-1}(r) =\frac{r}{G(\int_{\Omega}F(\phi_1,\phi_2,\cdots,\phi_m)d\bx)}\sum\limits_{i=1}^3(f_i,\partial_t\phi_i).
\end{split}
\end{equation} 
Then an iterative  Crank-Nicolson scheme for system \eqref{mod:three:CH}  is
\begin{equation}\label{scheme:mod:three:CH}
\begin{split}
&\frac{\phi_i^{n+1}-\phi_i^n}{\delta t}=M\mathcal{G}\mu_i^{n+\frac 12}, \quad i=1,2,3,\\
&\mu^{n+\frac 12}_i=\mathcal{L}\phi_i^{n+\frac 12} +\frac{r^{n+\frac 12}}{G(\int_{\Omega}F(\phi_1^{\star,n},\phi_2^{\star,n},\cdots,\phi_m^{\star,n}) d\bx)}f_i^{\star,n},\\
&\frac{(G^{-1}(r))^{n+1}-(G^{-1}(r))^n}{\delta t}=\frac{r^{n+\frac 12}}{G(\int_{\Omega}F(\phi_1^{\star,n},\phi_2^{\star,n},\phi_3^{\star,n})d\bx)}\sum\limits_{i=1}^3(f_i^{\star,n},\frac{\phi_i^{n+1}-\phi_i^n}{\delta t}),
\end{split}
\end{equation}
where $f_i^{*,n}=\frac 12(3f_i^n-f_i^{n-1})$ and $\phi^{*,n}=\frac 12(3\phi^n-\phi^{n-1})$.
It is easy to see  that a no-iterative Crank-Nicolson scheme can be developed similarly by treating $r$ explicitly. And it can also be solved efficiently as  one component case. 

Taking inner product of equation \eqref{scheme:mod:three:CH} with $\mu_i^{n+\frac 12}$,  $\frac{\phi_i^{n+1}-\phi_i^n}{\delta t}$ for $i=1,2,3$,  summing up these three equalities and combining the third equation of  \eqref{scheme:mod:three:CH},  we derive the following energy dissipative law for scheme \eqref{scheme:mod:three:CH}.

\begin{thm}
 The  scheme \eqref{scheme:mod:three:CH}- is unconditionally energy stable in the sense that
 $$\tilde E(\phi_1^{n+1},\phi_2^{n+1},\cdots,\phi_m^{n+1})-\tilde E(\phi_1^{n},\phi_2^{n},\cdots,\phi_m^{n})) \le -M\sum\limits_{i=1}^m(\mathcal{G}\mu_i^{n+\frac 12},\mu_i^{n+\frac 12}),$$
 where $\tilde E(\phi^k)=\int_\Omega \sum\limits_{i=1}^m\frac12\mathcal{L}\phi_i^k\cdot \phi_i^k d\bx+  G^{-1}(r^k)$.
\end{thm}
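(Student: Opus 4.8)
The plan is to follow the standard energy-estimate strategy for SAV-type schemes: pair each line of \eqref{scheme:mod:three:CH} against a suitable test function, sum over the $m$ components, and exploit the built-in structure of the third equation so that the nonlinear contribution collapses into a clean discrete time difference of $G^{-1}(r)$. Concretely, I would first take the $L^2$-inner product of the first equation of \eqref{scheme:mod:three:CH} with $\mu_i^{n+\frac12}$ and sum over $i=1,\dots,m$, which isolates the dissipation term $-M\sum_i(\mathcal{G}\mu_i^{n+\frac12},\mu_i^{n+\frac12})$ on one side and $\sum_i\big(\frac{\phi_i^{n+1}-\phi_i^n}{\delta t},\mu_i^{n+\frac12}\big)$ on the other.

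Next I would take the inner product of the second equation with $\frac{\phi_i^{n+1}-\phi_i^n}{\delta t}$ and again sum over $i$. Substituting this relation into the first identity makes the $\mu_i^{n+\frac12}$ cross terms cancel, leaving an identity whose right-hand side is the dissipation and whose left-hand side splits into a linear (quadratic) part coming from $\mathcal{L}$ and a nonlinear part coming from the $f_i^{\star,n}$ terms.

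The two telescoping steps are the heart of the argument. For the linear part I would use that $\mathcal{L}$ is self-adjoint and that $\phi_i^{n+\frac12}=\frac12(\phi_i^{n+1}+\phi_i^n)$, so that $(\mathcal{L}\phi_i^{n+\frac12},\phi_i^{n+1}-\phi_i^n)=\frac12\big((\mathcal{L}\phi_i^{n+1},\phi_i^{n+1})-(\mathcal{L}\phi_i^n,\phi_i^n)\big)$; summing over $i$ yields exactly the difference of the quadratic part of $\tilde E$. For the nonlinear part I would recognize that the scalar coefficient $\frac{r^{n+\frac12}}{G(\int_\Omega F(\phi^{\star,n})\,d\bx)}$ appearing in $\mu_i^{n+\frac12}$ is identical to the one in the third equation of \eqref{scheme:mod:three:CH}; hence $\sum_i\big(\frac{r^{n+\frac12}}{G(\int_\Omega F(\phi^{\star,n})\,d\bx)}f_i^{\star,n},\frac{\phi_i^{n+1}-\phi_i^n}{\delta t}\big)$ equals $\frac{(G^{-1}(r))^{n+1}-(G^{-1}(r))^n}{\delta t}$ by construction. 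Multiplying through by $\delta t$ then gives the exact energy identity $\tilde E(\phi^{n+1})-\tilde E(\phi^{n})=-\delta t\,M\sum_i(\mathcal{G}\mu_i^{n+\frac12},\mu_i^{n+\frac12})$, and since $\mathcal{G}$ is positive the stated inequality follows.

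The step I expect to be the only genuine obstacle is confirming the nonlinear telescoping: it relies entirely on the third equation of \eqref{scheme:mod:three:CH} functioning as a discrete chain rule for $\frac{d}{dt}G^{-1}(r)$ carrying the same scalar prefactor as the chemical potential, so I would verify carefully that this prefactor and the extrapolated potentials $f_i^{\star,n}$ match between the second and third equations. Here the extrapolation $g^{\star,n}=\frac12(3g^n-g^{n-1})$ enters only as a frozen coefficient and does not disturb the identity. The linear telescoping is then routine given the self-adjointness of $\mathcal{L}$ and the midpoint definition of $\phi_i^{n+\frac12}$.
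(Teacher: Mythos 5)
Your proposal is correct and follows essentially the same route as the paper: test the first equation with $\mu_i^{n+\frac12}$ and the second with $\frac{\phi_i^{n+1}-\phi_i^n}{\delta t}$, sum over $i$, telescope the quadratic part via the self-adjointness of $\mathcal{L}$ and the midpoint average, and absorb the nonlinear part through the third equation, whose matching scalar prefactor is indeed the whole point. The only caveats are cosmetic ones inherited from the paper itself (the sign convention $\partial_t\phi_i=M\mathcal{G}\mu_i$ versus $-\mathcal{G}\mu$, and the missing $\delta t$ factor on the right-hand side of the stated inequality), which your derivation handles in the intended way.
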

The numerical scheme \eqref{scheme:mod:three:CH} can also be efficiently solved. Setting
$$\xi^{n+\frac 12}=\frac{r^{n+\frac 12}}{G(\int_{\Omega}F(\phi_1^{\star,n},\phi_2^{\star,n},\cdots,\phi_m^{\star,n}) d\bx)}.$$ Obviously $\xi^{n+\frac 12}$ is a second-order approximation for  $1$.
writing 
\begin{equation}\label{sol:three}
\phi_i^{n+1}=\phi_{i,1}^{n+1}+\xi^{n+\frac 12}\phi_{i,2}^{n+1},\; \mu_i^{n+1}=\mu_{i,1}^{n+1}+\xi^{n+\frac 12}\mu_{i,2}^{n+1},\; 
 \end{equation}
in the above, we find that $(\phi_i^{n+1},\mu_i^{n+1})\;(i=1,2,\cdots,m)$ can be determined  as follows:
\begin{eqnarray}
&&\frac{\phi_{i,1}^{n+1}-\phi_i^n}{\delta t}=M\mathcal{G}\mu^{n+\frac 12}_{i,1},\label{three:sav:1c}\\
&&\mu_{i,1}^{n+\frac 12}=\mathcal{L}\phi^{n+\frac 12}_{i,1}, \label{three:sav:2c}
\end{eqnarray}
and
\begin{eqnarray}
&&\frac{\phi_{i,2}^{n+1}}{\delta t}=M\mathcal{G}\mu^{n+\frac 12}_{i,2},\label{three:sav:1d}\\
&&\mu_{i,2}^{n+\frac 12}=\mathcal{L}\phi^{n+\frac 12}_{i,2} +f_i^{\star,n}.\label{three:sav:2d}
\end{eqnarray}
Once  $(\phi_{i,1}^{n+1},\phi_{i,2}^{n+1})$ are solved, we plug $\phi_i^{n+1}=\phi_{i,1}^{n+1}+\xi^{n+\frac 12}\phi_{i,2}^{n+1}$  and  
\begin{equation}\label{rn}
r^{n+\frac 12}=\xi^{n+\frac 12}G(\int_{\Omega}F(\phi_1^{\star,n},\phi_2^{\star,n},\cdots,\phi_m^{\star,n}) d\bx),
\end{equation}
 into the third  equation of \eqref{scheme:mod:three:CH}  to obtain $\xi^{n+\frac 12}$ by solving a nonlinear algebraic equation where Newton iterator solver should be implemented with initial guess $(\xi^{n+\frac 12})^0=1$.  Finally solution $\phi^{n+1}$ can be updated by equation \eqref{sol:three}.
 
In summary,  we can then determine solution $\phi_i^{n+1}$ for $i=1,2,\cdots,m$  as follows:
 \begin{itemize}
\item  Solve  linear constant coefficient equation  from equations  \eqref{three:sav:1c}-\eqref{three:sav:2c} to obtain $\phi_{i,1}^{n+1}$ and equations  \eqref{three:sav:1d}-\eqref{three:sav:2d} to obtain $\phi_{i,2}^{n+1}$;
\item  Solve $\xi^{n+\frac 12}$ from the third equation \eqref{scheme:mod:three:CH} by plugging equation \eqref{rn} into it;
\item  Update $\phi_i^{n+1}=\phi_{i,1}^{n+1}+\xi^{n+\frac 12}\phi_{i,2}^{n+1}$  for $i=1,2,\cdots,m$.
\end{itemize}
Hence, the above scheme can be  implemented very efficiently.

\section{The second approach}
We can treat scalar variable $r$ explicitly in equation \eqref{gsav:2}, and derive 
an  no-iterative,   first-order  numerical scheme of G-SAV for system \eqref{gsav:1}-\eqref{gsav:3}  is 
\begin{eqnarray}
&&\frac{\phi^{n+1}-\phi^n}{\delta t}=-\mathcal{G}\mu^{n+1},,\label{scheme2:gsav:1}\\
&&\mu^{n+1}=\mathcal{L}\phi^{n+1}+\frac{r^{n}}{G(\int_{\Omega}F(\phi^n)d\bx)}F'(\phi^n)\label{scheme2:gsav:2},\\
&&\frac{(G^{-1}(r))^{n+1}-(G^{-1}(r))^n}{\delta t}=\frac{r^{n}}{G(\int_{\Omega}F(\phi^n)d\bx)}(F'(\phi^n),\frac{\phi^{n+1}-\phi^n}{\delta t}).\label{scheme2:gsav:3}
\end{eqnarray}
Taking the inner products of \eqref{scheme2:gsav:1} with $\mu^{n+1}$ and of \eqref{scheme2:gsav:2} with $-\frac{\phi^{n+1}-\phi^n}{\delta t}$,  summing up the results and taking into account \eqref{scheme2:gsav:3},  we have the following:
\begin{thm}
 The  scheme \eqref{scheme2:gsav:1}-\eqref{scheme2:gsav:3} is unconditionally energy stable in the sense that
 $$\tilde E(\phi^{n+1})-\tilde E(\phi^{n}) \le -\Delta t(\mathcal{G}\mu^{n+1},\mu^{n+1}),$$
 where $\tilde E(\phi^k)=\int_\Omega \frac12\mathcal{L}\phi^k\cdot \phi^k d\bx +(G^{-1}(r))^k$.
\end{thm}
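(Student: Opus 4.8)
The plan is to follow precisely the testing recipe announced just before the statement: pair \eqref{scheme2:gsav:1} with $\mu^{n+1}$ in the $L^2$ inner product, pair \eqref{scheme2:gsav:2} with $-\frac{\phi^{n+1}-\phi^n}{\delta t}$, add the two resulting identities, and substitute \eqref{scheme2:gsav:3}. The first thing I would record is that the scalar coefficient
\[ b^n := \frac{r^n}{G\left(\int_\Omega F(\phi^n)\,d\bx\right)} \]
is \emph{identical} in \eqref{scheme2:gsav:2} and on the right-hand side of \eqref{scheme2:gsav:3}; this matching is the structural feature that makes the whole argument close, and it is the only place where the explicit choice $r^n$ enters.

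Carrying out the pairings, testing \eqref{scheme2:gsav:1} with $\mu^{n+1}$ gives $\left(\frac{\phi^{n+1}-\phi^n}{\delta t},\mu^{n+1}\right)=-(\mathcal{G}\mu^{n+1},\mu^{n+1})$, while testing \eqref{scheme2:gsav:2} with $-\frac{\phi^{n+1}-\phi^n}{\delta t}$ yields on its right-hand side the two contributions $-\left(\mathcal{L}\phi^{n+1},\frac{\phi^{n+1}-\phi^n}{\delta t}\right)$ and $-b^n\left(F'(\phi^n),\frac{\phi^{n+1}-\phi^n}{\delta t}\right)$. Adding the two identities, the mixed terms $\pm\left(\mu^{n+1},\frac{\phi^{n+1}-\phi^n}{\delta t}\right)$ cancel, leaving a relation among the dissipation term, the linear-operator term, and the nonlinear term.

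Next I would dispose of the two surviving right-hand contributions. For the linear part I would use the self-adjointness and positivity of $\mathcal{L}$ via the polarization identity
\[ 2\left(\mathcal{L}\phi^{n+1},\phi^{n+1}-\phi^n\right)=\left(\mathcal{L}\phi^{n+1},\phi^{n+1}\right)-\left(\mathcal{L}\phi^n,\phi^n\right)+\left(\mathcal{L}(\phi^{n+1}-\phi^n),\phi^{n+1}-\phi^n\right), \]
which rewrites $\left(\mathcal{L}\phi^{n+1},\frac{\phi^{n+1}-\phi^n}{\delta t}\right)$ as the difference of the quadratic energies plus a nonnegative numerical-dissipation term. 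For the nonlinear part I would invoke \eqref{scheme2:gsav:3} verbatim to replace $b^n\left(F'(\phi^n),\frac{\phi^{n+1}-\phi^n}{\delta t}\right)$ by $\frac{(G^{-1}(r))^{n+1}-(G^{-1}(r))^n}{\delta t}$. Multiplying through by $\delta t$ and recognizing $\tilde E(\phi^k)=\frac{1}{2}(\mathcal{L}\phi^k,\phi^k)+(G^{-1}(r))^k$, I would obtain the exact identity
\[ \tilde E(\phi^{n+1})-\tilde E(\phi^{n})=-\delta t(\mathcal{G}\mu^{n+1},\mu^{n+1})-\frac{1}{2}\left(\mathcal{L}(\phi^{n+1}-\phi^n),\phi^{n+1}-\phi^n\right), \]
and discarding the final nonpositive term gives the claimed inequality.

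The step I expect to demand the most thought is conceptual rather than computational: one must see that treating $r$ \emph{explicitly} in \eqref{scheme2:gsav:2} does not spoil stability. Because the same scalar $b^n$ sits in front of $F'(\phi^n)$ in both the chemical-potential relation and the auxiliary update \eqref{scheme2:gsav:3}, the nonlinear inner product cancels cleanly regardless of whether $r$ is evaluated at level $n$ or $n+1$. In particular the argument never differentiates $G^{-1}$ and never uses a sign of $r^n$, so invertibility of $G$ is all that the auxiliary variable must provide; the only inequality actually used is the positivity of $\mathcal{L}$ (which controls the numerical-dissipation term), and even the positivity of $\mathcal{G}$ is not needed for the stated bound.
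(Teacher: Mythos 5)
Your proof is correct and follows exactly the route the paper indicates: testing \eqref{scheme2:gsav:1} with $\mu^{n+1}$ and \eqref{scheme2:gsav:2} with $-\frac{\phi^{n+1}-\phi^n}{\delta t}$, summing, using the polarization identity for the self-adjoint positive operator $\mathcal{L}$, and substituting \eqref{scheme2:gsav:3} to absorb the nonlinear term into $(G^{-1}(r))^{n+1}-(G^{-1}(r))^n$. The paper leaves these computations implicit, so your write-up (including the key observation that the same explicit coefficient $r^n/G(\int_\Omega F(\phi^n)\,d\bx)$ appears in both \eqref{scheme2:gsav:2} and \eqref{scheme2:gsav:3}, which is what makes the cancellation work) is simply a complete version of the intended argument.
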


We now show that the above G-SAV scheme \eqref{scheme2:gsav:1}-\eqref{scheme2:gsav:3} can be efficiently implemented.
Writing equation \eqref{scheme2:gsav:1} as 
\begin{equation}
\frac{\phi^{n+1}}{\delta t}+\mathcal{G}\mathcal{L}\phi^{n+1}=\frac{\phi^n}{\delta t}
-\mathcal{G}\{\frac{r^{n}}{G(\int_{\Omega}F(\phi^n)d\bx)}F'(\phi^n)\}.
\end{equation}
Defining Linear operator $\chi=\frac{1}{\delta t}+\mathcal{G}\mathcal{L}$, then $\phi^{n+1}$ can be solved by
\begin{equation}\label{step:1}
\phi^{n+1}=\chi^{-1}\{\frac{\phi^n}{\delta t}
-\mathcal{G}\{\frac{r^{n}}{G(\int_{\Omega}F(\phi^n)d\bx)}F'(\phi^n)\}\}.
\end{equation}
$r^{n+1}$ can be updated by plugging $\phi^{n+1}$ into equation \eqref{scheme2:gsav:3}
\begin{equation}\label{step:2}
(G^{-1}(r))^{n+1}=(G^{-1}(r))^n+\delta t\frac{r^n}{G(\int_{\Omega}F(\phi^n)d\bx)}(F'(\phi^n),\frac{\phi^{n+1}-\phi^n}{\delta t}).
\end{equation}
Finally  $r^{n+1}=G(G^{-1}(r))^{n+1}$.

In summary,  we can then determine solution $\phi^{n+1}$  as follows:
 \begin{itemize}
\item  Solve a linear constant coefficient equation  from \eqref{step:1};
\item  Update $(G^{-1}(r))^{n+1}$ from equation \eqref{step:2};
\item  Obtain $r^{n+1}=G(G^{-1}(r))^{n+1}$.
\end{itemize}
Hence, the above scheme can be  implemented very efficiently.  The second-order numerical schemes based on BDF2 or Crank-Nicolson can be constructed similarly. 

\begin{remark}
If we choose function $G=e^x$ as the exponential function, which is  the just  so-called E-SAV approach  developed in \cite{liu2019exponential}, some numerical simulations are also presented  in \cite{liu2019exponential} to validate the efficiency of this approach.
\end{remark}

\section{The third  approach (Lagrange multiplier approach)} 
Both the first and second approaches preserve modified energy dissipative law in discrete level. In this section,  we develop SAV approach which preserves original energy dissipative  law instead of  modified energy dissipative law.  Since the auxiliary variable is defined as  $r=G(\int_{\Omega}F(\phi)d\bx$, then we obtain  $\int_{\Omega}F(\phi)d\bx=G^{-1}(r)$.  The system \eqref{grad:flow}  can be reformulated  as 
\begin{eqnarray}
&&\partial_t\phi=-\mathcal{G}\mu,\label{ori:gsav:1}\\
&&\mu=\mathcal{L}\phi+\frac{r}{G(\int_{\Omega}F(\phi)d\bx)}F'(\phi)\label{ori:gsav:2},\\
&&\frac{d}{dt}\int_{\Omega}F(\phi)d\bx=\frac{d}{dt}G^{-1}(r)=\frac{r}{G(\int_{\Omega}F(\phi)d\bx)}(F'(\phi),\phi_t). \label{ori:gsav:3}
\end{eqnarray}
A second-order scheme for \eqref{ori:gsav:1}-\eqref{ori:gsav:3} is constructed as 
\begin{eqnarray}
&&\frac{3\phi^{n+1}-4\phi^n+\phi^{n-1}}{2\delta t}=-\mathcal{G}\mu^{n+1},\label{scheme:ori:gsav:1}\\
&&\mu^{n+1}=\mathcal{L}\phi^{n+1}+\frac{r^{n+1}}{G(\int_{\Omega}F(\phi^{\dagger,n})d\bx)}F'(\phi^{\dagger,n})\label{scheme:ori:gsav:2},\\
&&\frac{\int_{\Omega}3F(\phi^{n+1})-4F(\phi^n)+F(\phi^{n-1})d\bx}{2\delta t}\nonumber\\&& \hskip 1cm=\frac{r^{n+1}}{G(\int_{\Omega}F(\phi^{\dagger,n})d\bx)}(F'(\phi^{\dagger,n}),\frac{3\phi^{n+1}-4\phi^n+\phi^{n-1}}{2\delta t}). \label{scheme:ori:gsav:3}
\end{eqnarray}

Taking the inner products of \eqref{scheme:ori:gsav:1} with $\mu^{n+1}$ and of \eqref{scheme:ori:gsav:2} with $-\frac{3\phi^{n+1}-4\phi^n+\phi^{n-1}}{2\delta t}$,  summing up the results and taking into account \eqref{scheme:ori:gsav:3},  we have the following:
\begin{thm}\label{third:decay}
 The  scheme \eqref{scheme:ori:gsav:1}-\eqref{scheme:ori:gsav:3}  is unconditionally energy stable in the sense that
 $$\tilde E(\phi^{n+1})-\tilde E(\phi^{n}) \le -\Delta t(\mathcal{G}\mu^{n+1},\mu^{n+1}),$$
 where $\tilde E(\phi^k)=\int_\Omega \frac14(\mathcal{L}\phi^k\cdot \phi^k +\mathcal{L}(2\phi^k-\phi^{k-1})\cdot (2\phi^k-\phi^{k-1}))d\bx +\frac 32\int_{\Omega}F(\phi^k)d\bx-\frac 12\int_{\Omega}F(\phi^k)d\bx$.
\end{thm}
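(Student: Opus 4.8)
The plan is to reproduce the inner-product pairing used in the preceding theorems, but now I must carry the second-order backward difference through a BDF2 polarization identity rather than the simple telescoping available in the first-order case. First I would pair the discrete dynamics \eqref{scheme:ori:gsav:1} with $\mu^{n+1}$ in $L^2(\Omega)$, giving $(\frac{3\phi^{n+1}-4\phi^n+\phi^{n-1}}{2\delta t},\mu^{n+1})=-(\mathcal{G}\mu^{n+1},\mu^{n+1})$, and pair the potential relation \eqref{scheme:ori:gsav:2} with $-\frac{3\phi^{n+1}-4\phi^n+\phi^{n-1}}{2\delta t}$. Adding the two, the $\mu^{n+1}$ cross terms cancel, and what survives is the dissipation term $-(\mathcal{G}\mu^{n+1},\mu^{n+1})$, a linear term $-(\mathcal{L}\phi^{n+1},\frac{3\phi^{n+1}-4\phi^n+\phi^{n-1}}{2\delta t})$, and the nonlinear term carrying the scalar factor $\frac{r^{n+1}}{G(\int_\Omega F(\phi^{\dagger,n})d\bx)}$.

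Next I would invoke the constraint \eqref{scheme:ori:gsav:3}, whose entire role here is to identify that nonlinear term directly with the discrete difference of the \emph{original} potential, namely $\frac{r^{n+1}}{G(\int_\Omega F(\phi^{\dagger,n})d\bx)}(F'(\phi^{\dagger,n}),\frac{3\phi^{n+1}-4\phi^n+\phi^{n-1}}{2\delta t})=\frac{1}{2\delta t}\int_\Omega(3F(\phi^{n+1})-4F(\phi^n)+F(\phi^{n-1}))\,d\bx$. This substitution is precisely the mechanism distinguishing the third approach from the first two: no chain-rule surrogate $\frac{d}{dt}G^{-1}(r)$ enters the energy bookkeeping, so the exact BDF2 difference of $F$ appears and the scheme dissipates the true energy rather than a modified one.

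The heart of the argument is the BDF2 identity
$$2(3a-4b+c,\,a)=\|a\|^2+\|2a-b\|^2-\|b\|^2-\|2b-c\|^2+\|a-2b+c\|^2,$$
which I would apply in the $\mathcal{L}$-weighted inner product $(\mathcal{L}\cdot,\cdot)$ (legitimate since $\mathcal{L}$ is self-adjoint and nonnegative) with $a=\phi^{n+1}$, $b=\phi^n$, $c=\phi^{n-1}$. The first four terms on the right telescope into $E_L(\phi^{n+1})-E_L(\phi^n)$, where $E_L(\phi^k)=\frac14[(\mathcal{L}\phi^k,\phi^k)+(\mathcal{L}(2\phi^k-\phi^{k-1}),2\phi^k-\phi^{k-1})]$, which is exactly the quadratic part of $\tilde E$. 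The final term, $\frac14(\mathcal{L}(\phi^{n+1}-2\phi^n+\phi^{n-1}),\phi^{n+1}-2\phi^n+\phi^{n-1})\ge 0$ by positivity of $\mathcal{L}$, is the numerical-dissipation surplus that converts the equality into the claimed inequality.

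Finally I would note that the potential difference telescopes, $\frac12(3F(\phi^{n+1})-4F(\phi^n)+F(\phi^{n-1}))=(\frac32 F(\phi^{n+1})-\frac12 F(\phi^n))-(\frac32 F(\phi^n)-\frac12 F(\phi^{n-1}))$, i.e. $E_F(\phi^{n+1})-E_F(\phi^n)$ with $E_F(\phi^k)=\int_\Omega(\frac32 F(\phi^k)-\frac12 F(\phi^{k-1}))\,d\bx$. Multiplying the assembled identity by $\delta t$, collecting the $\mathcal{L}$- and $F$-energy differences into $\tilde E=E_L+E_F$, and discarding the nonnegative remainder yields $\tilde E(\phi^{n+1})-\tilde E(\phi^n)\le-\delta t(\mathcal{G}\mu^{n+1},\mu^{n+1})$. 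I anticipate no genuine obstacle beyond careful bookkeeping of the five-term identity; the one subtlety worth flagging is that the potential part of $\tilde E$ in the statement should read $\frac32\int_\Omega F(\phi^k)\,d\bx-\frac12\int_\Omega F(\phi^{k-1})\,d\bx$ (a two-level combination), rather than $\frac32\int_\Omega F(\phi^k)\,d\bx-\frac12\int_\Omega F(\phi^k)\,d\bx$, for the telescoping to close.
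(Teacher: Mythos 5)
Your proposal is correct and follows essentially the same route as the paper: pairing \eqref{scheme:ori:gsav:1} with $\mu^{n+1}$ and \eqref{scheme:ori:gsav:2} with $-\frac{3\phi^{n+1}-4\phi^n+\phi^{n-1}}{2\delta t}$, substituting \eqref{scheme:ori:gsav:3}, and invoking the standard BDF2 polarization identity in the $\mathcal{L}$-weighted inner product; the paper merely sketches this while you supply the details. Your flagged correction is also right: the stated energy should read $\frac32\int_{\Omega}F(\phi^k)d\bx-\frac12\int_{\Omega}F(\phi^{k-1})d\bx$, which is evidently a typo in the paper.
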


\begin{remark}
From Theorem \ref{third:decay} we find that scheme \eqref{scheme:ori:gsav:1}-\eqref{scheme:ori:gsav:3} satisfies original energy dissipative law. 
It is also  observed that the scheme \eqref{scheme:ori:gsav:1}-\eqref{scheme:ori:gsav:3} is exactly the same  with the Lagrange multiplier approach \cite{cheng2019new} by treating  $$\eta^{n+1}=\frac{r^{n+1}}{G(\int_{\Omega}F(\phi^{\dagger,n})d\bx)}.$$   Ample numerical simulations are shown in \cite{cheng2019new} to validate the efficiency of Lagrange multiplier approach. 
\end{remark}

\subsection{Stabilized-G-SAV approach and adaptive time stepping}
For problems with stiff nonlinear terms, one may have to use very small time steps to obtain accurate results with G-SAV  above. In order to allow larger time steps while achieving desired accuracy, we may add suitable stabilization and use  adaptive time stepping. 
\subsubsection{Stabilization}
Instead of solving \eqref{grad:flow}, we consider a perturbed  system with two additional stabilization terms
\begin{equation}\label{grad:flow2}
\begin{split}
&\phi_t=-\mathcal{G}\mu,\\
&\mu=\mathcal{L}\phi+\epsilon_1 \phi_{tt}+\epsilon_2 \mathcal{L}\phi_{tt}+F'(\phi),
\end{split}
\end{equation}
where $\epsilon_i,\;i=1,2$ are two small stabilization constants whose choices will depend on how stiff are the nonlinear terms. It is easy to see that the above system is a gradient flow with a perturbed free energy $E_\epsilon(\phi)=E(\phi)+\frac{\epsilon_1}2(\phi_t,\phi_t)+\frac{\epsilon_2}2(\mathcal{L}\phi_t,\phi_t)$ and satisfies the following energy law:
\begin{equation}
 \frac d{dt}E_\epsilon(\phi)=-(\mathcal{G}\mu,\mu).
\end{equation}
The schemes presented before for \eqref{grad:flow} can all be easily extended for  \eqref{grad:flow2} while keeping the same simplicity. For example, a second order scheme based on the second approach is:
\begin{eqnarray}
&&\frac{\phi^{n+1}-\phi^n}{\delta t}=-\mathcal{G}\mu^{n+1/2},\label{lsav:1c2}\\
&&\mu^{n+1/2}=\mathcal{L}\phi^{n+1/2}+ \frac{\epsilon_1}{(\delta t)^2}(\phi^{n+1}-2\phi^n+\phi^{n-1})
\nonumber\\&&\hskip 1cm +\frac{\epsilon_2}{(\delta t)^2}\mathcal{L}(\phi^{n+1}-2\phi^n+\phi^{n-1}) +\frac{r^{n+\frac 12}}{G(\int_{\Omega}F(\phi^{\star,n})d\bx)}F'(\phi^{\star,n}),\label{lsav:2c2}\\
&&\frac{G^{-1}(r^{n+1}) -G^{-1}(r^{n})}{\delta t} =\frac{r^{n+\frac 12}}{G(\int_{\Omega}F(\phi^{\star,n})d\bx)}(F'(\phi^{\star,n}),\frac{\phi^{n+1}-\phi^n}{\delta t})\label{lsav:3c2},
\end{eqnarray}
where $f^{n+1/2}=\frac 12(f^{n+1}+f^n)$ and $f^{*,n}=\frac 12(3f^n-f^{n-1})$  for any sequence $\{f^n\}$. 

Taking the inner products of \eqref{lsav:1c2} with $\mu^{n+1/2}$ and  of 
\eqref{lsav:2c2} with $-\frac{\phi^{n+1}-\phi^n}{\delta t}$, summing up the results along with \eqref{lsav:3c2} and dropping some unnecessary terms, we immediately derive the following results:
\begin{thm}
The scheme \eqref{lsav:1c2}-\eqref{lsav:3c2} is unconditionally energy stable in the sense that
$$E_\epsilon^{n+1}-E_\epsilon^n\le -\delta t(\mathcal{G}\mu^{n+1/2},\mu^{n+1/2}),$$
where $E_\epsilon^k=E(\phi^k)+\frac{\epsilon_1}{2}(\frac{\phi^k-\phi^{k-1}}{\delta t},\frac{\phi^k-\phi^{k-1}}{\delta t})+\frac{\epsilon_2}{2}(\mathcal{L}\frac{\phi^k-\phi^{k-1}}{\delta t},\frac{\phi^k-\phi^{k-1}}{\delta t}) $ with $E(\phi)$ being the original free energy defined in \eqref{orienergy}.
\end{thm}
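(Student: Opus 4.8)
The plan is to follow the standard SAV energy-estimate recipe adapted to the Crank--Nicolson midpoint structure of \eqref{lsav:1c2}--\eqref{lsav:3c2}: test each equation with the multiplier indicated in the text, add the results, and use self-adjointness of $\mathcal{L}$ together with discrete polarization identities to telescope every term into a difference of discrete energies. Concretely, I would take the $L^2$ inner product of \eqref{lsav:1c2} with $\mu^{n+1/2}$ and of \eqref{lsav:2c2} with $-\frac{\phi^{n+1}-\phi^n}{\delta t}$, then add. The two copies of $\left(\mu^{n+1/2},\frac{\phi^{n+1}-\phi^n}{\delta t}\right)$ cancel, so the left-hand side vanishes and the right-hand side reduces to $-(\mathcal{G}\mu^{n+1/2},\mu^{n+1/2})$ minus four potential-type contributions: one from $\mathcal{L}\phi^{n+1/2}$, one from each stabilization term, and the nonlinear contribution $\frac{r^{n+1/2}}{G(\int_\Omega F(\phi^{\star,n})\,d\bx)}\left(F'(\phi^{\star,n}),\frac{\phi^{n+1}-\phi^n}{\delta t}\right)$. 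By construction this last contribution is exactly the right-hand side of \eqref{lsav:3c2}, so I substitute it by the perfect telescoping difference $\frac{G^{-1}(r^{n+1})-G^{-1}(r^n)}{\delta t}$.

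Next I would telescope the remaining three terms. Since $\mathcal{L}$ is self-adjoint and $\phi^{n+1/2}=\frac12(\phi^{n+1}+\phi^n)$, the linear term satisfies $\delta t\,(\mathcal{L}\phi^{n+1/2},\frac{\phi^{n+1}-\phi^n}{\delta t})=\frac12(\mathcal{L}\phi^{n+1},\phi^{n+1})-\frac12(\mathcal{L}\phi^n,\phi^n)$, the quadratic part of the energy. For the two stabilization terms I set $d^k:=\phi^k-\phi^{k-1}$, so that $\phi^{n+1}-2\phi^n+\phi^{n-1}=d^{n+1}-d^n$ and $\phi^{n+1}-\phi^n=d^{n+1}$, and apply the polarization identity $(a-b,a)=\frac12(\|a\|^2-\|b\|^2+\|a-b\|^2)$ in the $L^2$ inner product for the $\epsilon_1$ term and in the $\mathcal{L}$-weighted inner product $(a,b)\mapsto(\mathcal{L}a,b)$ for the $\epsilon_2$ term. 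This assembles the stabilization part of $E_\epsilon^{n+1}-E_\epsilon^n$ and leaves two remainder terms that are nonnegative multiples of $\|d^{n+1}-d^n\|^2$ and of $(\mathcal{L}(d^{n+1}-d^n),d^{n+1}-d^n)$, the latter nonnegative precisely because $\mathcal{L}\ge 0$. Dropping these nonnegative ``unnecessary'' terms and multiplying through by $\delta t$ turns the identity into the claimed inequality.

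The genuinely delicate step is the bookkeeping for the stabilization terms: one must pair $(d^{n+1}-d^n,d^{n+1})$ correctly through the polarization identity and recognize which leftover square terms carry the right sign to be discarded, which is the only place where positivity of $\mathcal{L}$ (beyond self-adjointness) is used; everything else is cancellation and telescoping. I would also record that, because $r$ is advanced explicitly in this second approach, the nonlinear part of the decaying energy appears most naturally through the surrogate $G^{-1}(r^k)$ rather than the exact potential $\int_\Omega F(\phi^k)\,d\bx$, the two agreeing to the order of the scheme; collecting the assembled quadratic, (surrogate-)potential, and stabilization pieces into $E_\epsilon^k$ completes the argument.
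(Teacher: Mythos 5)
Your proposal is correct and follows essentially the same route the paper sketches: testing \eqref{lsav:1c2} with $\mu^{n+1/2}$ and \eqref{lsav:2c2} with $-\frac{\phi^{n+1}-\phi^n}{\delta t}$, substituting \eqref{lsav:3c2} for the nonlinear term, telescoping the quadratic and stabilization terms via self-adjointness and the polarization identity, and discarding the nonnegative $\|d^{n+1}-d^n\|^2$-type remainders (where positivity of $\mathcal{L}$ is indeed the only extra ingredient). Your closing remark that the nonlinear part of $E_\epsilon^k$ is really the surrogate $G^{-1}(r^k)$ rather than the literal $\int_\Omega F(\phi^k)\,d\bx$ of \eqref{orienergy} is accurate and worth keeping, since the theorem statement glosses over this.
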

It is clear that the above scheme can be efficiently implemented as the scheme 
\eqref{scheme:gsav:1}-\eqref{scheme:gsav:3}.

\subsubsection{Adaptive time stepping}
To improve the efficiency of G-SAV approach, one can combine them with
an adaptive time stepping method.   Many numerical examples have been provided  for the original SAV approaches \cite{cheng2018multiple,shen2018scalar,cheng2019new}.  Similarly,  we can also apply an adaptive time stepping strategy for G-SAV approach since all the schemes using G-SAV are energy stable.   For example,  we can  combine an  adaptive time stepping method with G-SAV scheme \eqref{lsav:1c2}-\eqref{lsav:3c2}  and  achieve  a second-order adaptive Cank-Nicolson scheme, see  \cite{cheng2018multiple}.  

\section{Numerical results} 
In this section some numerical experiments will be provided  to validate their stability and convergence rates for the first and second G-SAV  approaches, since enough numerical simulations are shown in \cite{cheng2019new} for the third approach.  We will compare the performance of  different G-SAV approaches by choosing various function $G$. In all numerical examples below, we assume  periodic boundary conditions and use a Fourier Spectral  method in space. The default  computational domain is $[-\pi,\pi)^d$ with $d=2$.

\subsection{Validation and comparison}
We consider famous  Allen-cahn \cite{allen1979microscopic} and  Cahn-Hilliard \cite{cahn1958free,cahn1959free} equations  and use $128$ modes in each direction in our Fourier Spectral method so that the spatial discretization errors are negligible compared with time discretization error. The total free energy of Allen-Cahn and Cahn-Hiliard equation is 
$$E_{tot}=\int_{\Omega}\frac 12|\Grad\phi|^2+F(\phi)d\bx,$$
where $F(\phi)=\frac{1}{4\eps^2}(\phi^2-1)^2$ is double well potential. The form of  chemical potential in \eqref{grad:flow} is 
$$\mu=-\Delta\phi+F'(\phi),$$  the $\mathcal{G}=I$ for Allen-Cahn equation and $\mathcal{G}=\Delta$ for Cahn-Hiliiard equation.

\subsubsection{\bf Comparison of  various G-SAV  approaches}
We first investigate the performance of  various  G-SAV  approaches proposed in Section 2.  We consider the 2D Cahn-Hilliard equation  and  choose  a  random initial condition
\begin{equation}\label{ini_rand}
\phi(x,y)=0.03+0.001\,{\rm rand}(x,y),
\end{equation}
where ${\rm rand}(x,y)$ represents random data between $[-1,1]^2$.

\begin{figure}[htbp]
\centering
\subfigure[$G=\tanh(\frac{x}{c}) $]{
\includegraphics[width=0.22\textwidth,clip==]{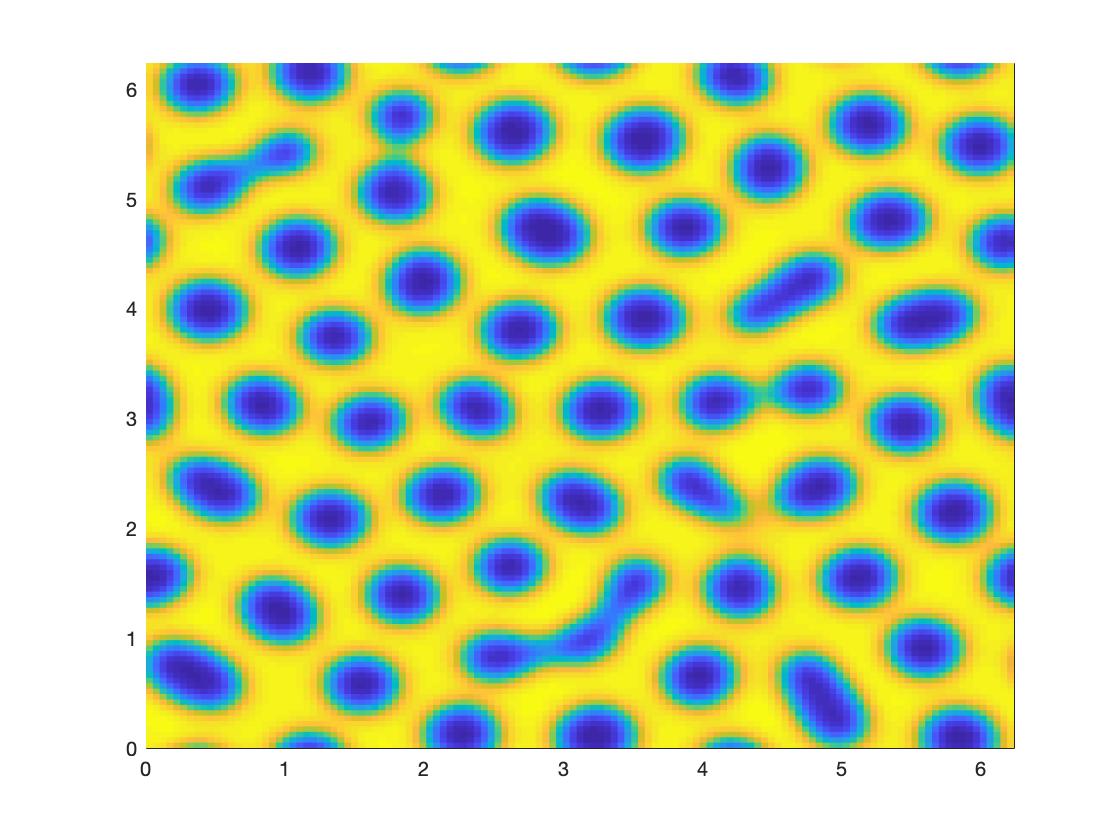}}
\subfigure[$G=\tanh(\frac{x}{c}) $ ]{
\includegraphics[width=0.22\textwidth,clip==]{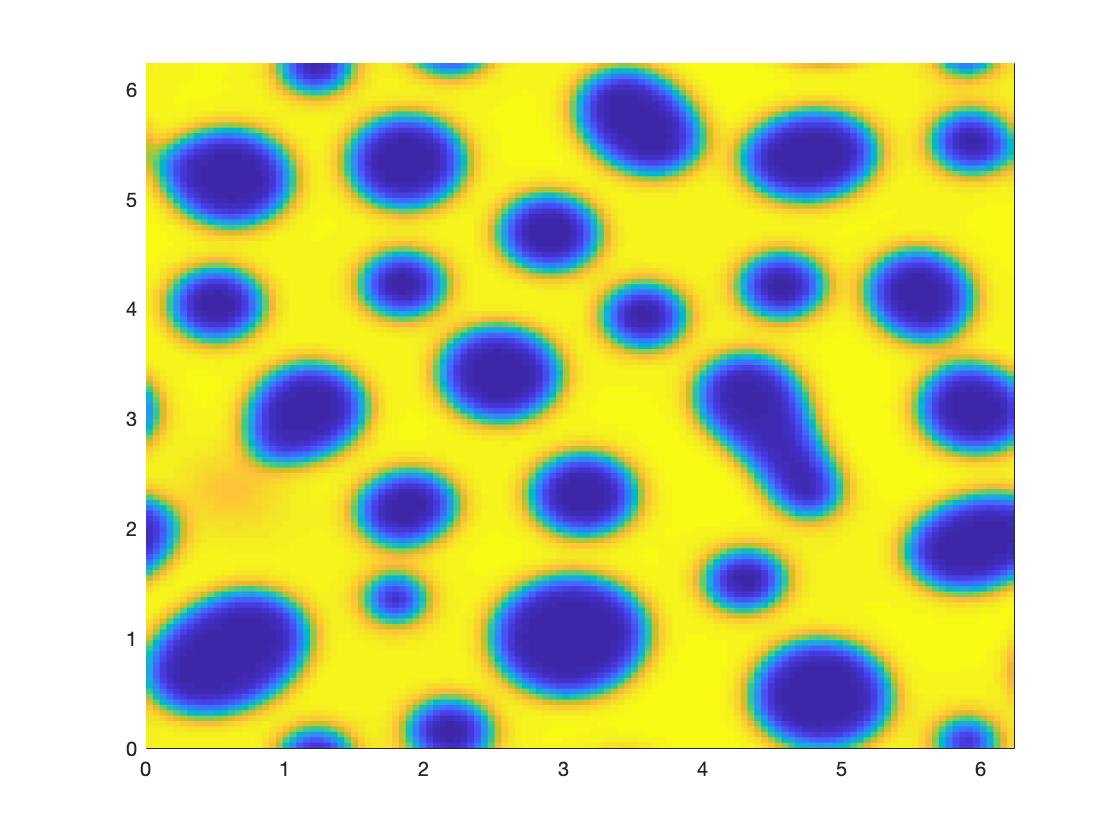}}
\subfigure[$G=\tanh(\frac{x}{c}) $ ]{
\includegraphics[width=0.22\textwidth,clip==]{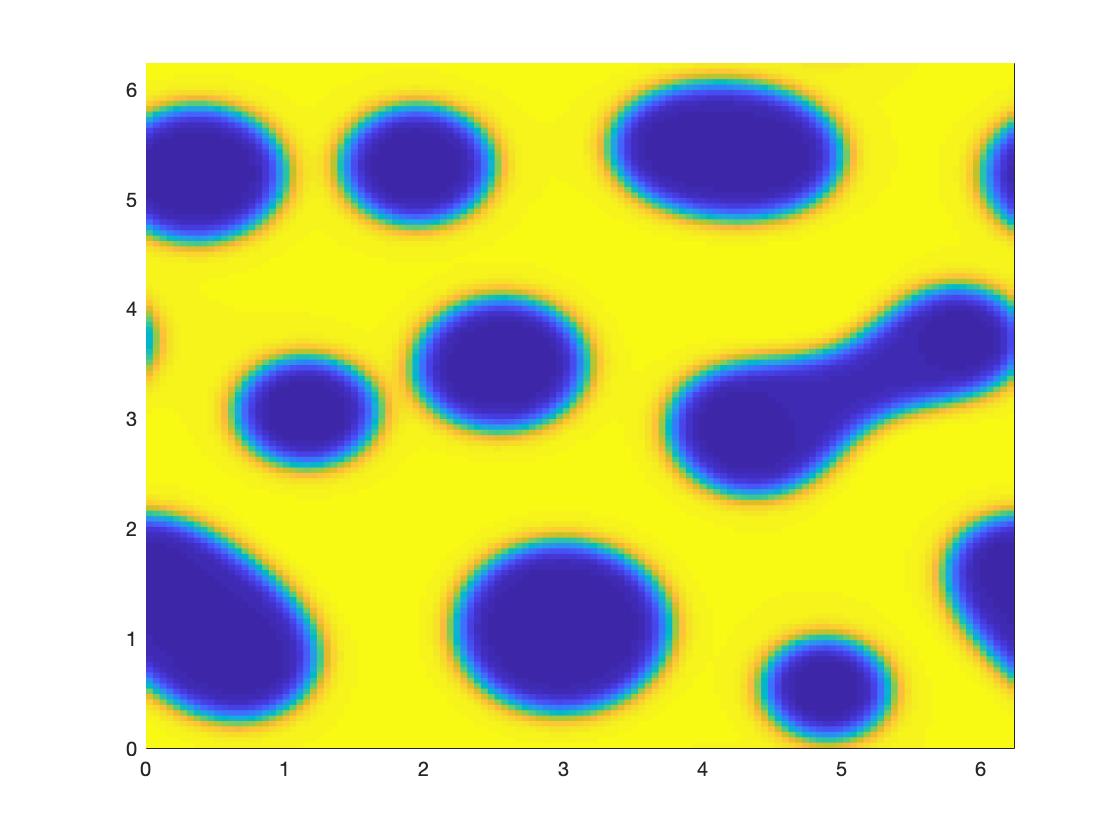}}
\subfigure[$G=\tanh(\frac{x}{c}) $ ]{
\includegraphics[width=0.22\textwidth,clip==]{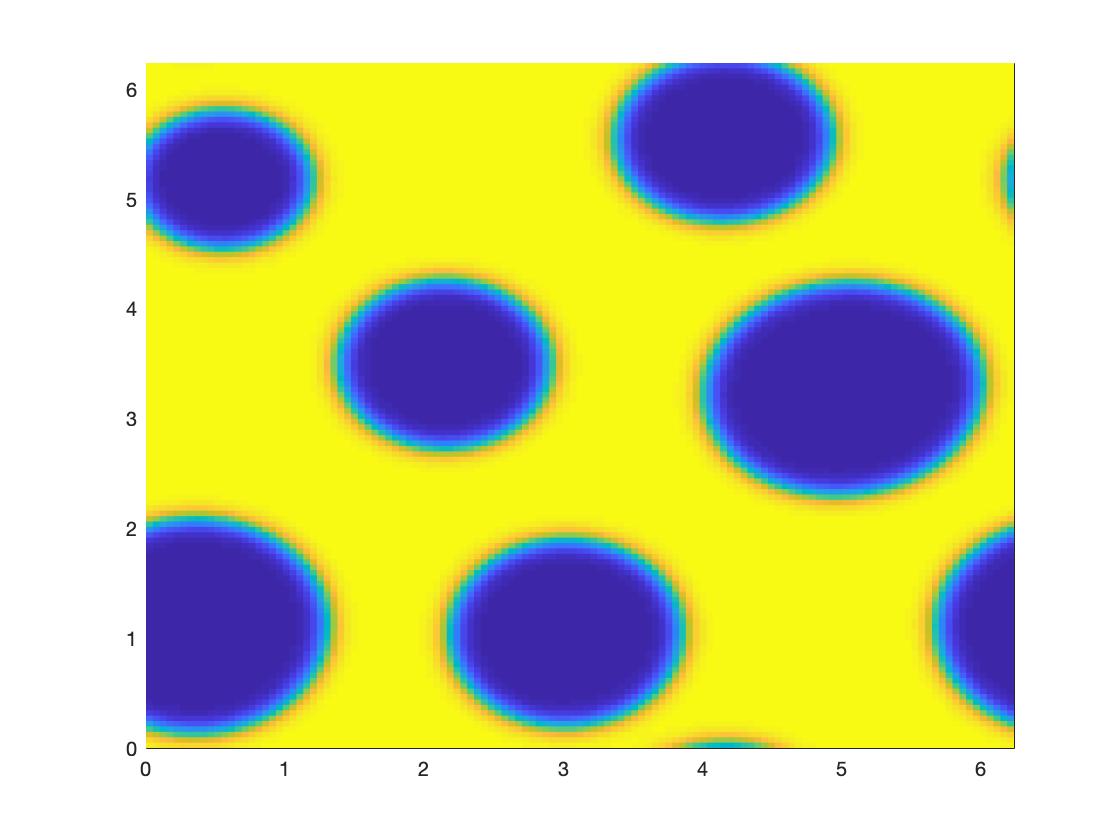}}
\subfigure[$G=x^{\frac 13}$]{
\includegraphics[width=0.22\textwidth,clip==]{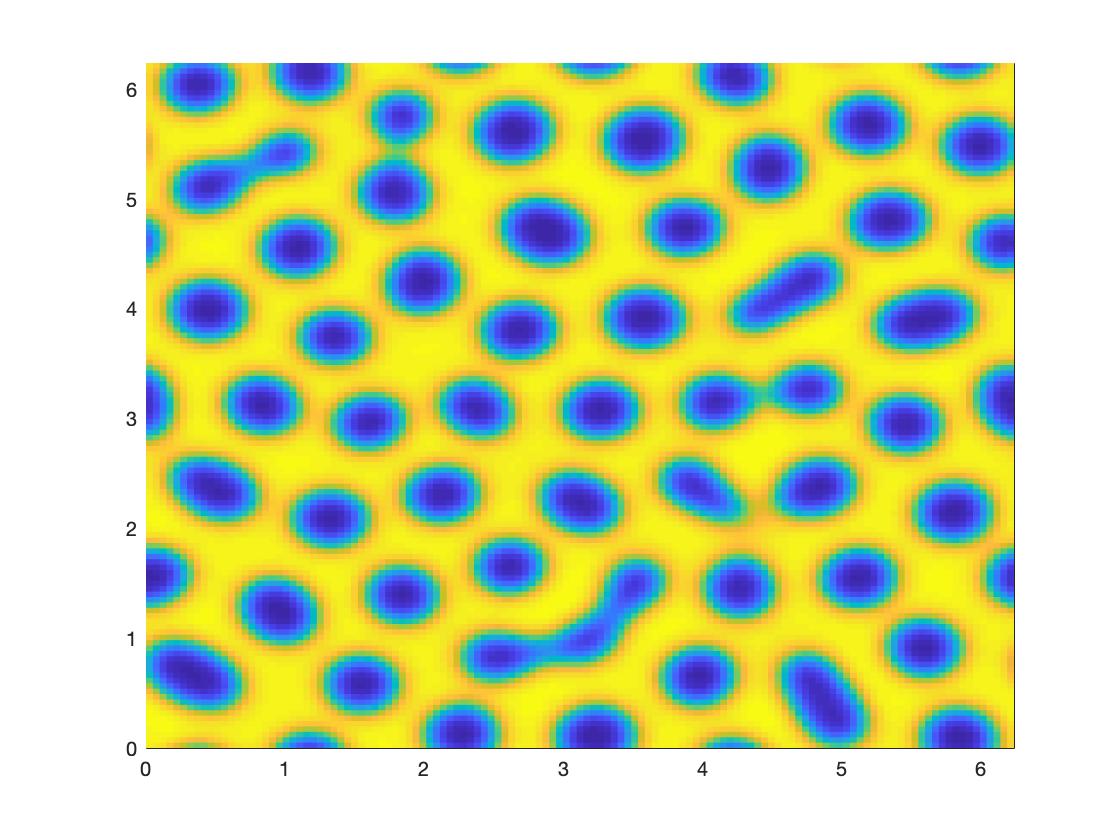}}
\subfigure[$G=x^{\frac 13} $ ]{
\includegraphics[width=0.22\textwidth,clip==]{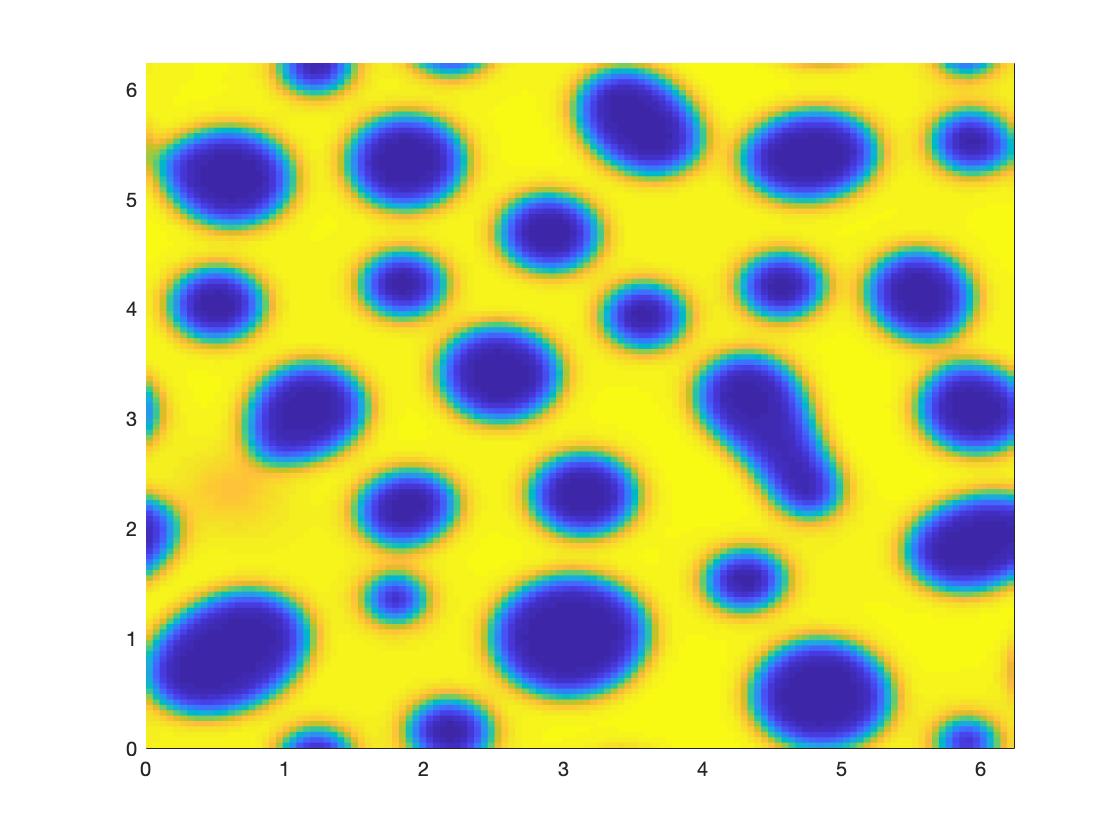}}
\subfigure[$G=x^{\frac 13} $ ]{
\includegraphics[width=0.22\textwidth,clip==]{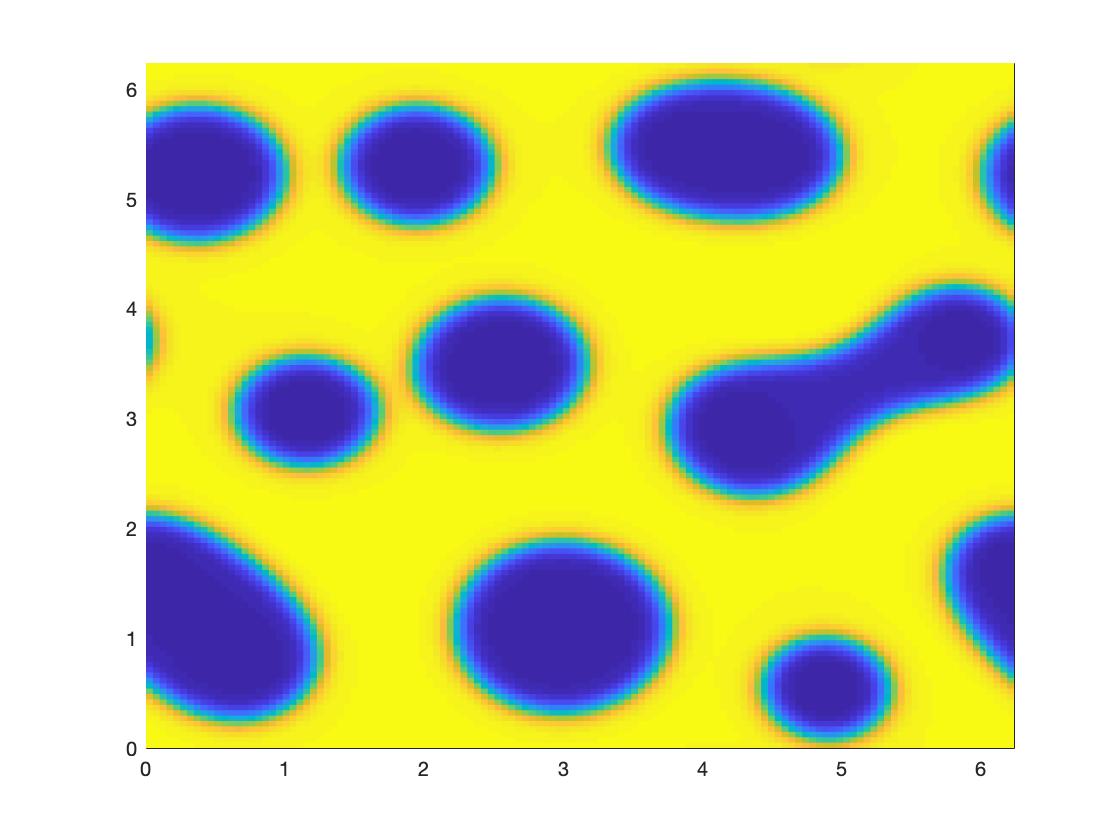}}
\subfigure[$G=x^{\frac 13} $ ]{
\includegraphics[width=0.22\textwidth,clip==]{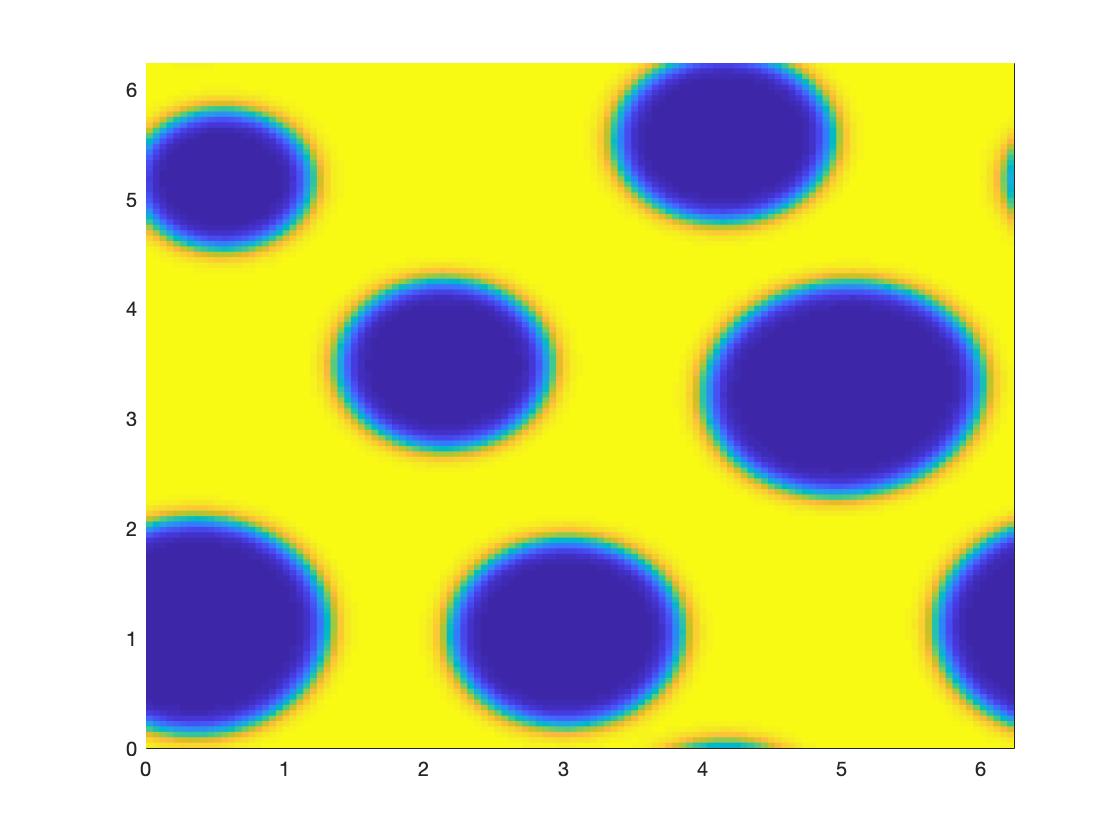}}
\subfigure[$G=x^3$]{
\includegraphics[width=0.22\textwidth,clip==]{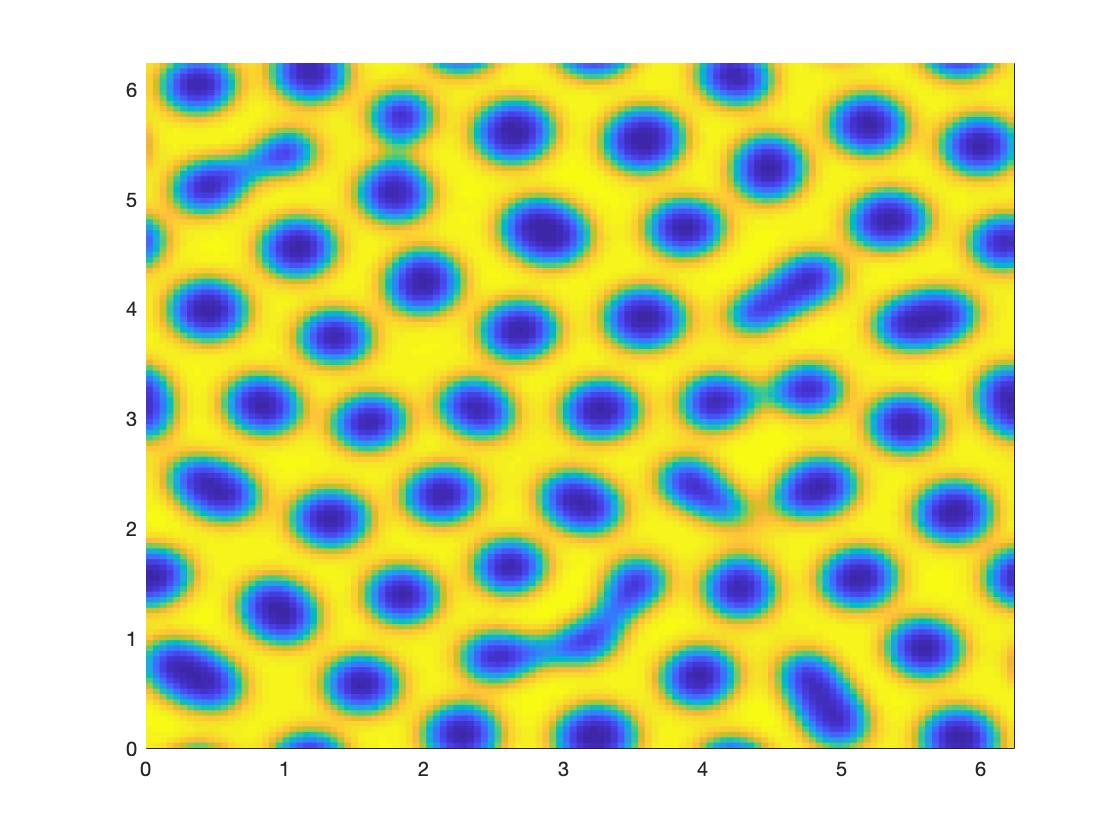}}
\subfigure[$G=x^3 $ ]{
\includegraphics[width=0.22\textwidth,clip==]{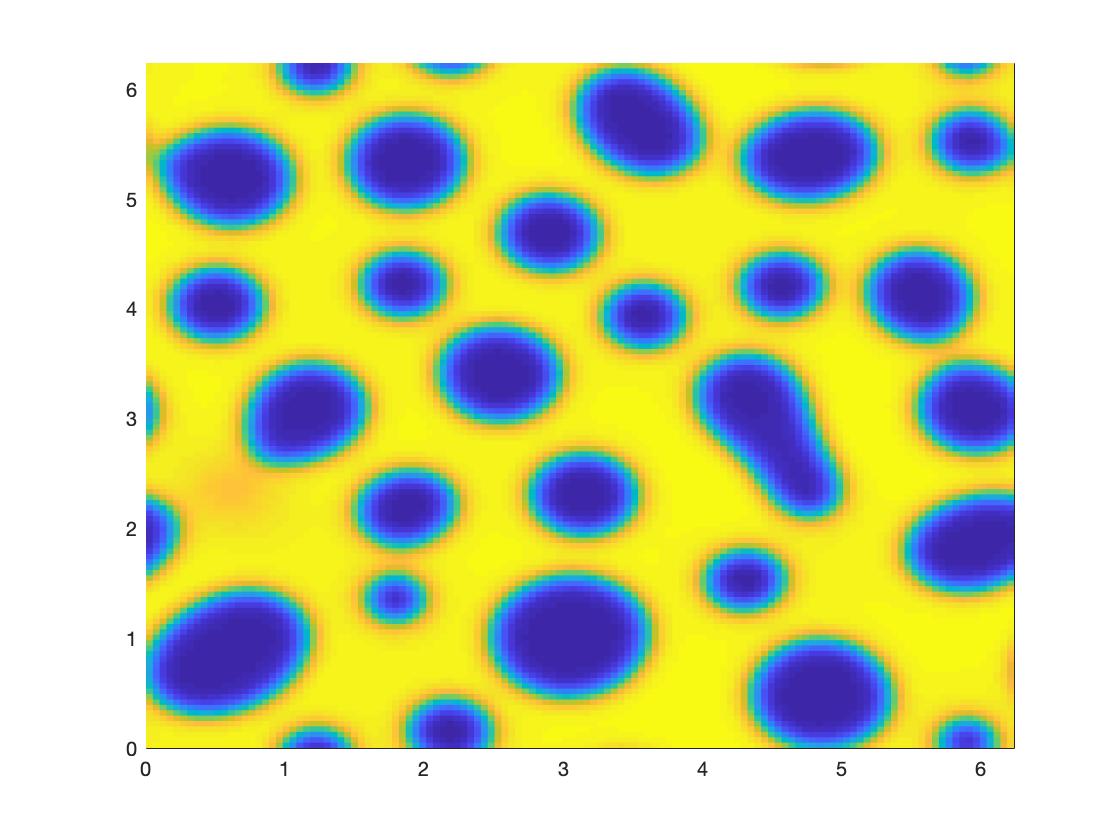}}
\subfigure[$G=x^3 $ ]{
\includegraphics[width=0.22\textwidth,clip==]{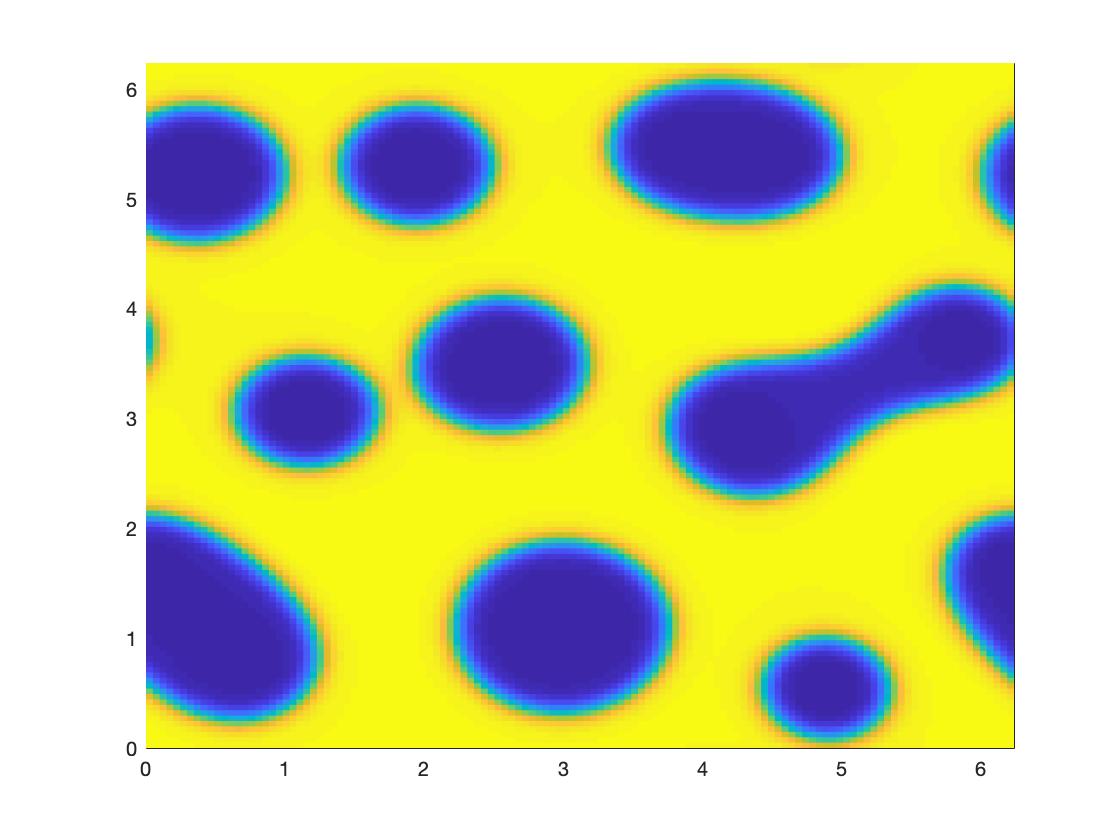}}
\subfigure[$G=x^3 $ ]{
\includegraphics[width=0.22\textwidth,clip==]{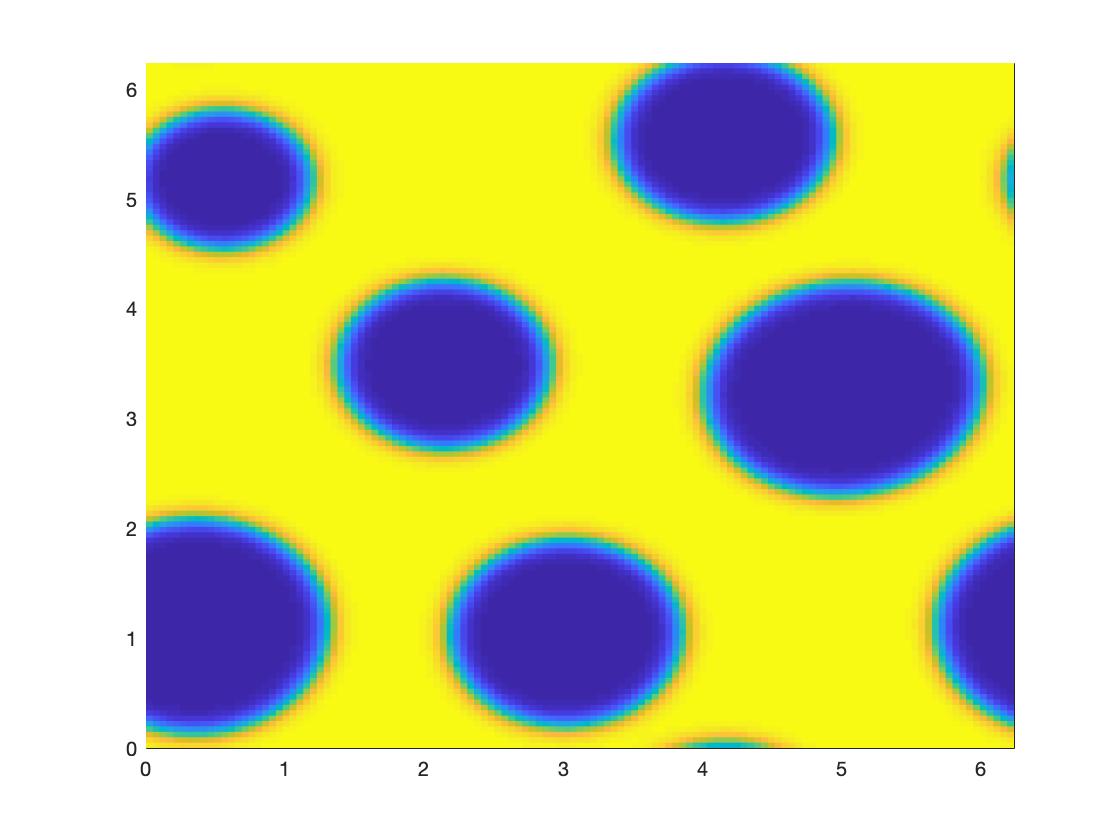}}
\subfigure[$G=e^{\frac{x}{c}} $]{
\includegraphics[width=0.22\textwidth,clip==]{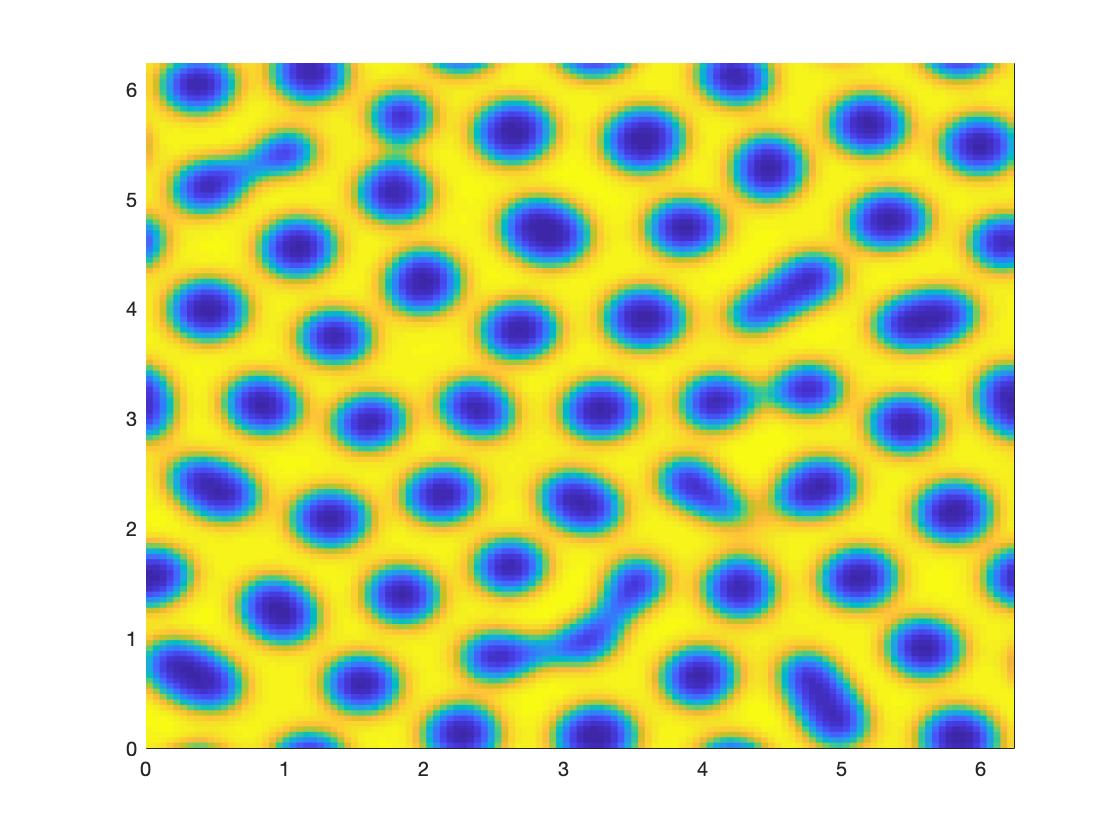}}
\subfigure[$G=e^{\frac{x}{c}}  $ ]{
\includegraphics[width=0.22\textwidth,clip==]{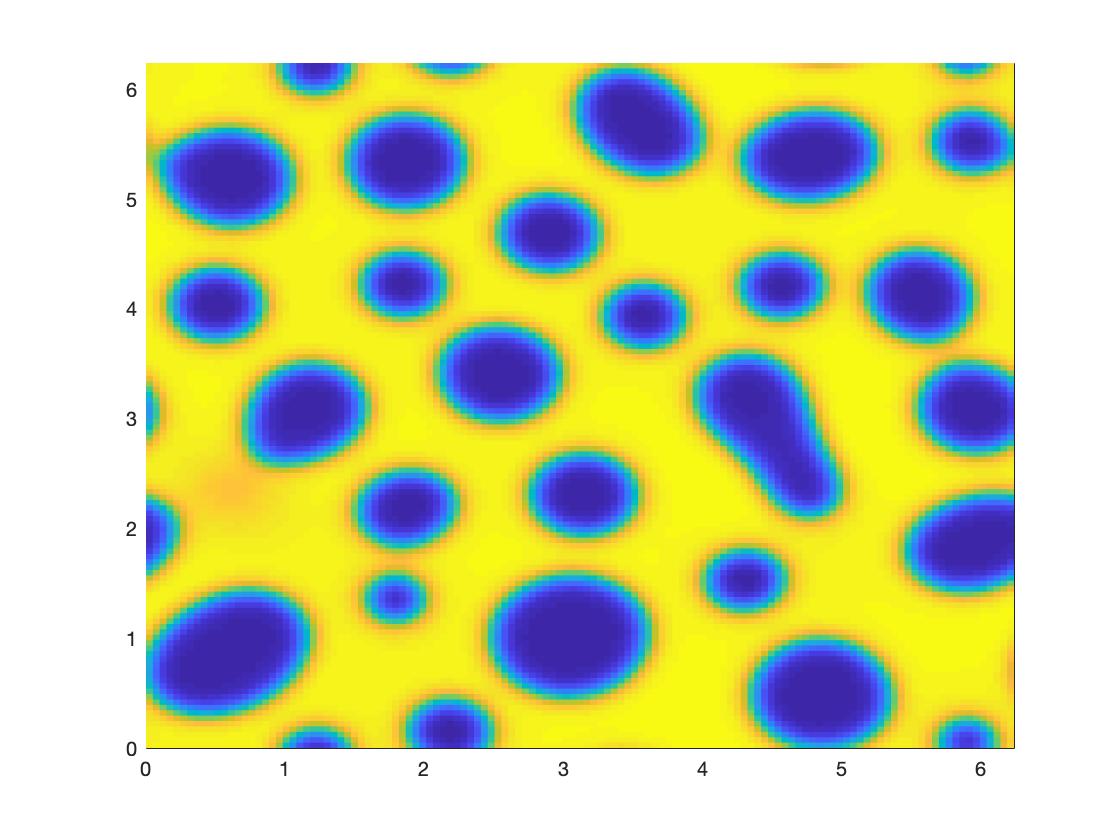}}
\subfigure[$G=e^{\frac{x}{c}}  $ ]{
\includegraphics[width=0.22\textwidth,clip==]{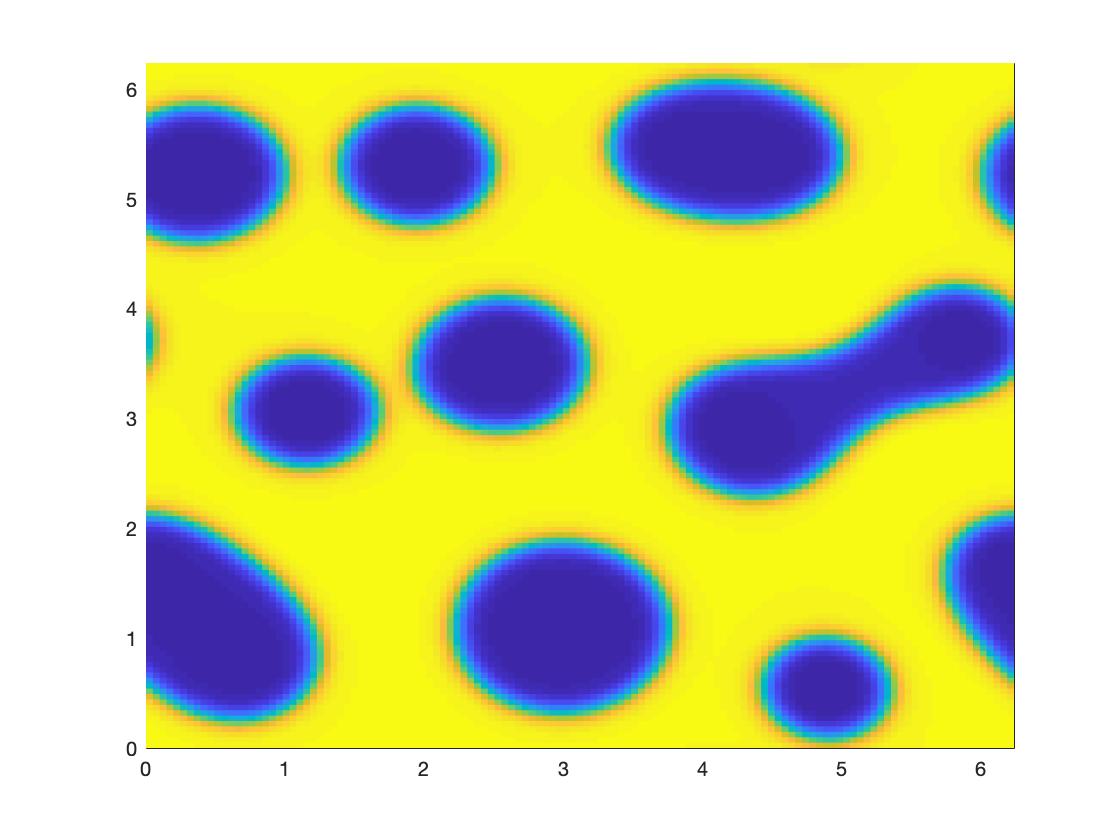}}
\subfigure[$G=e^{\frac{x}{c}} $ ]{
\includegraphics[width=0.22\textwidth,clip==]{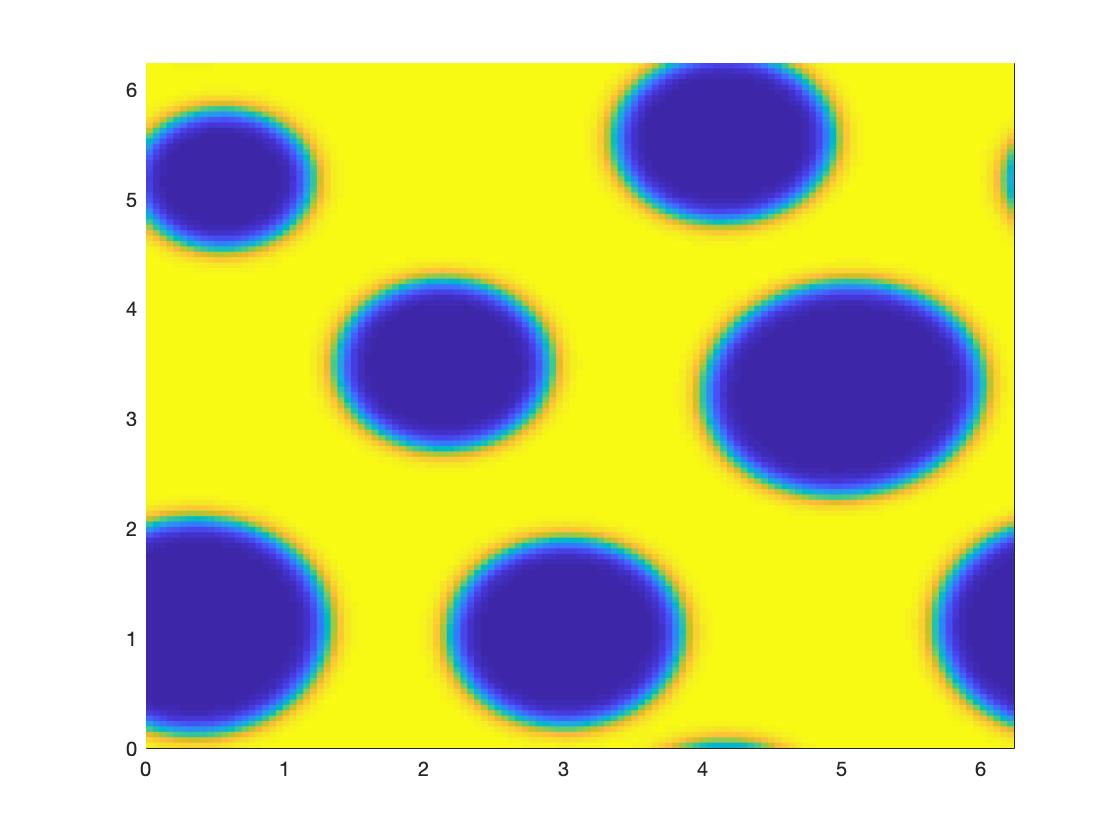}}
\subfigure[$ETDRK2$]{
\includegraphics[width=0.22\textwidth,clip==]{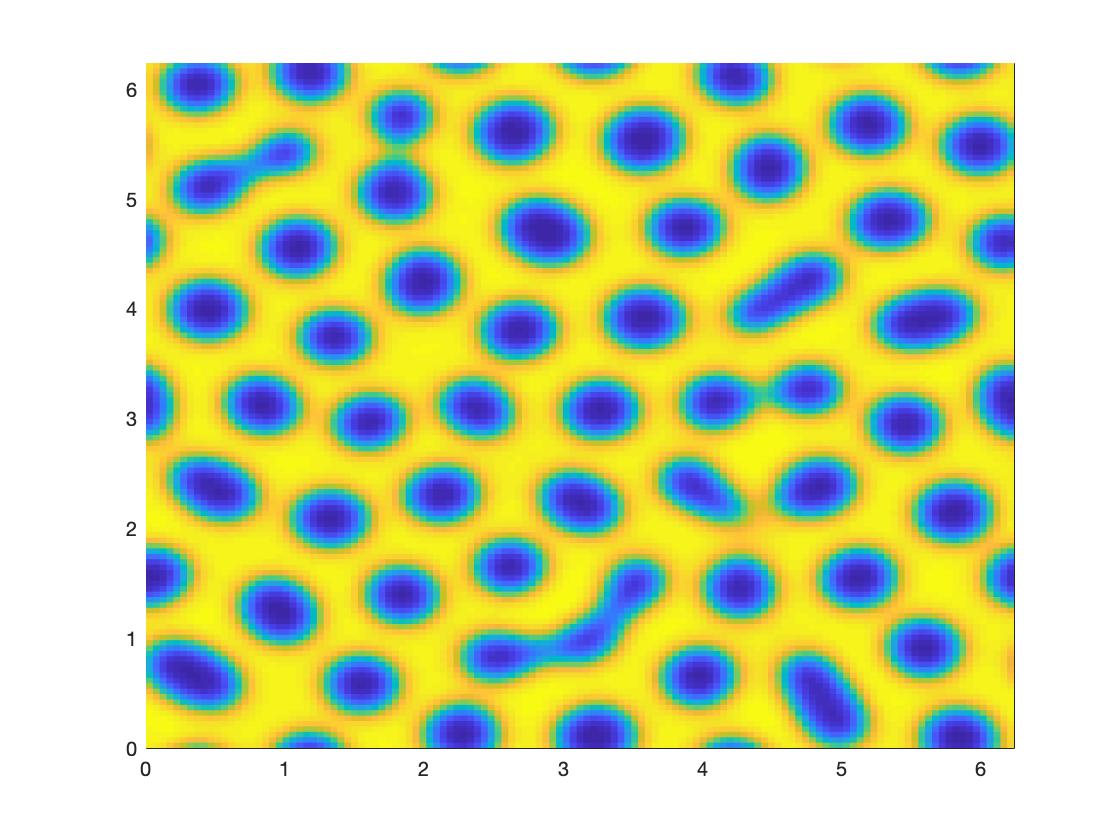}}
\subfigure[$ETDRK2 $ ]{
\includegraphics[width=0.22\textwidth,clip==]{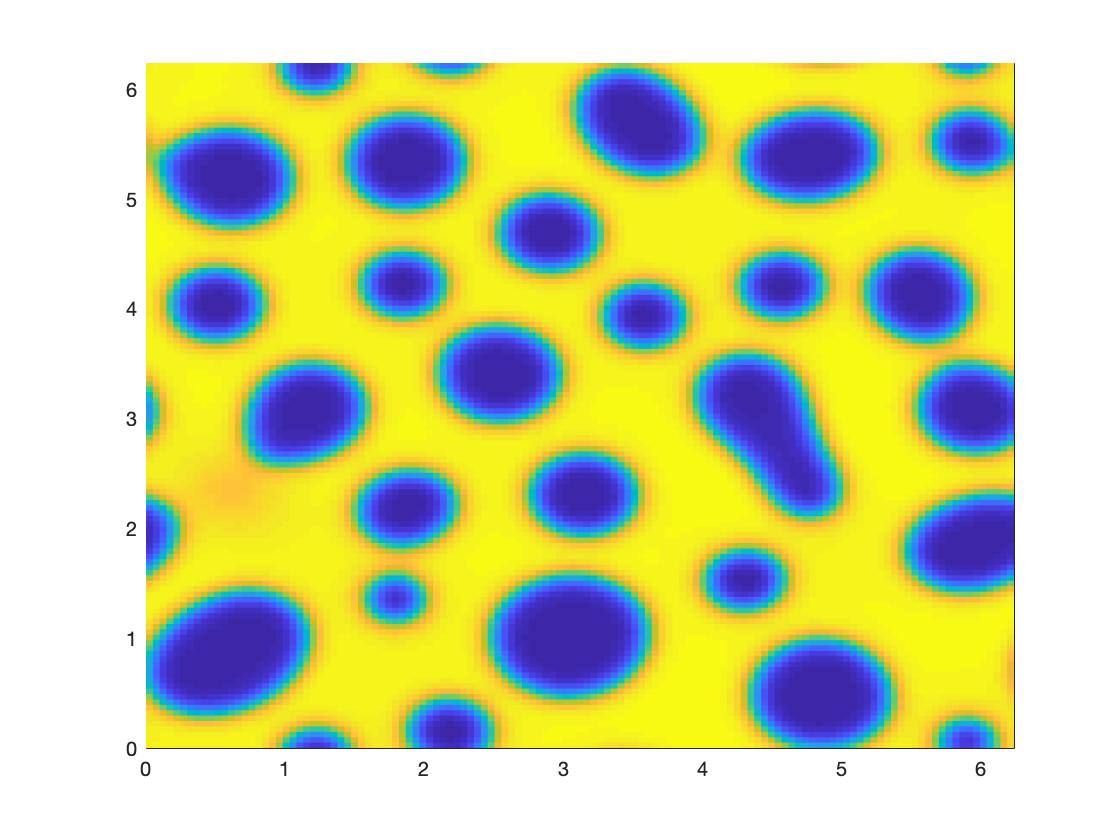}}
\subfigure[$ETDRK2  $ ]{
\includegraphics[width=0.22\textwidth,clip==]{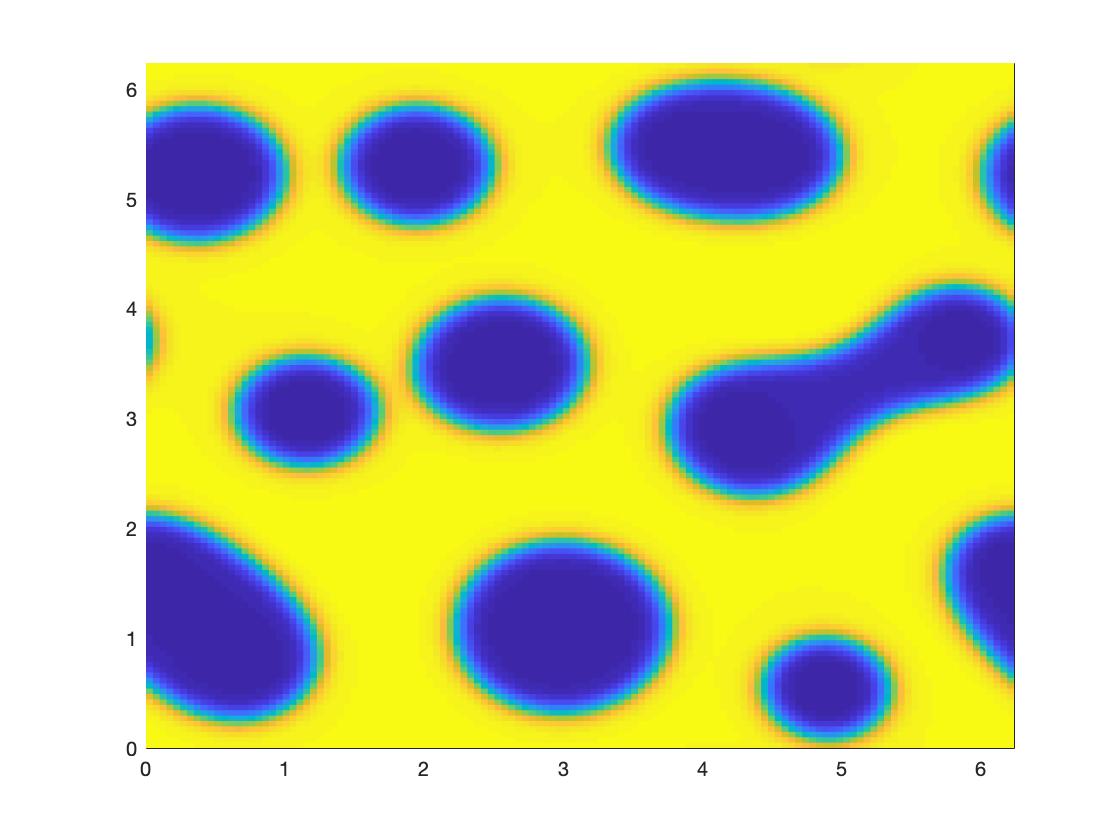}}
\subfigure[$ETDRK2$ ]{
\includegraphics[width=0.22\textwidth,clip==]{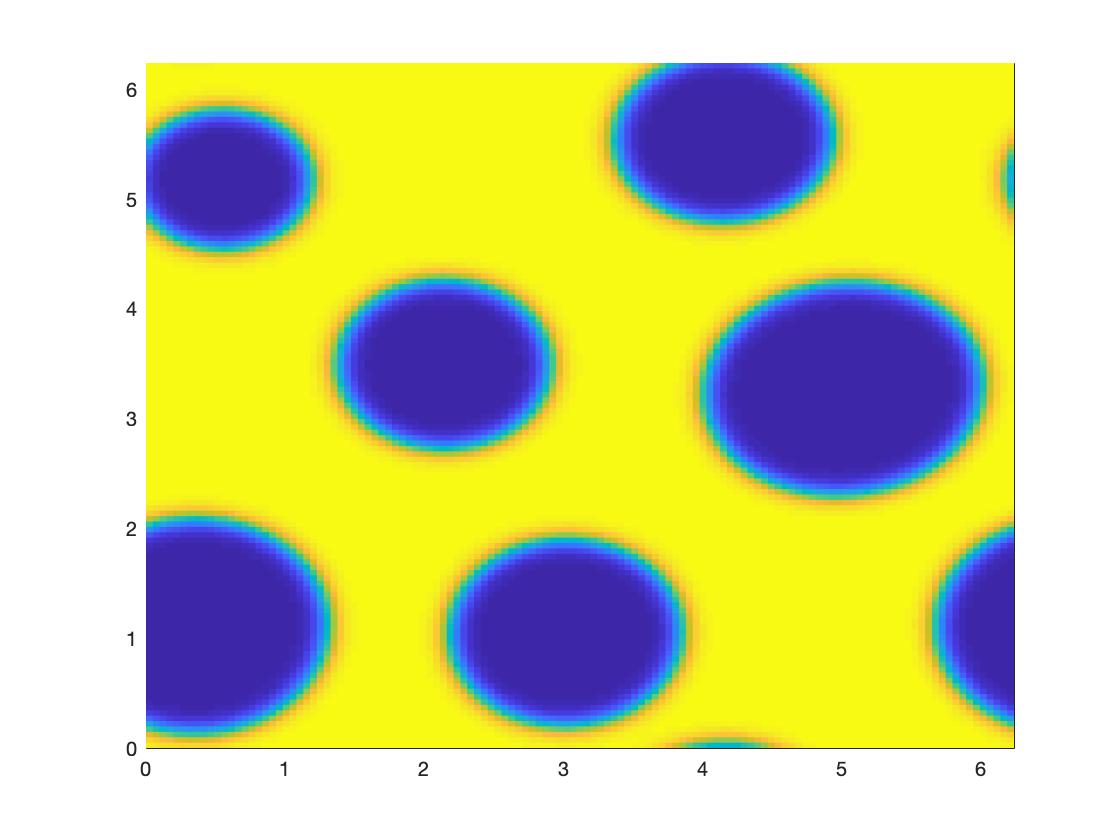}}
\caption{The 2D dynamical evolutions of the phase variable $\phi$ at $t=0.0025,0.01,0.04,0.1$ for the Cahn-Hilliard equation  with parameters $\eps^2=0.005$ computed by BDF2 scheme of  various no-iterative  SAV approaches and ETDRK2  with $\delta t=10^{-5}$.  The constant c is equal to  $10^4$ for $\tanh$ and mapped exponential SAV approaches. }\label{CH_contour}
\end{figure}

\begin{figure}[htbp]
\centering
\includegraphics[width=0.45\textwidth,clip==]{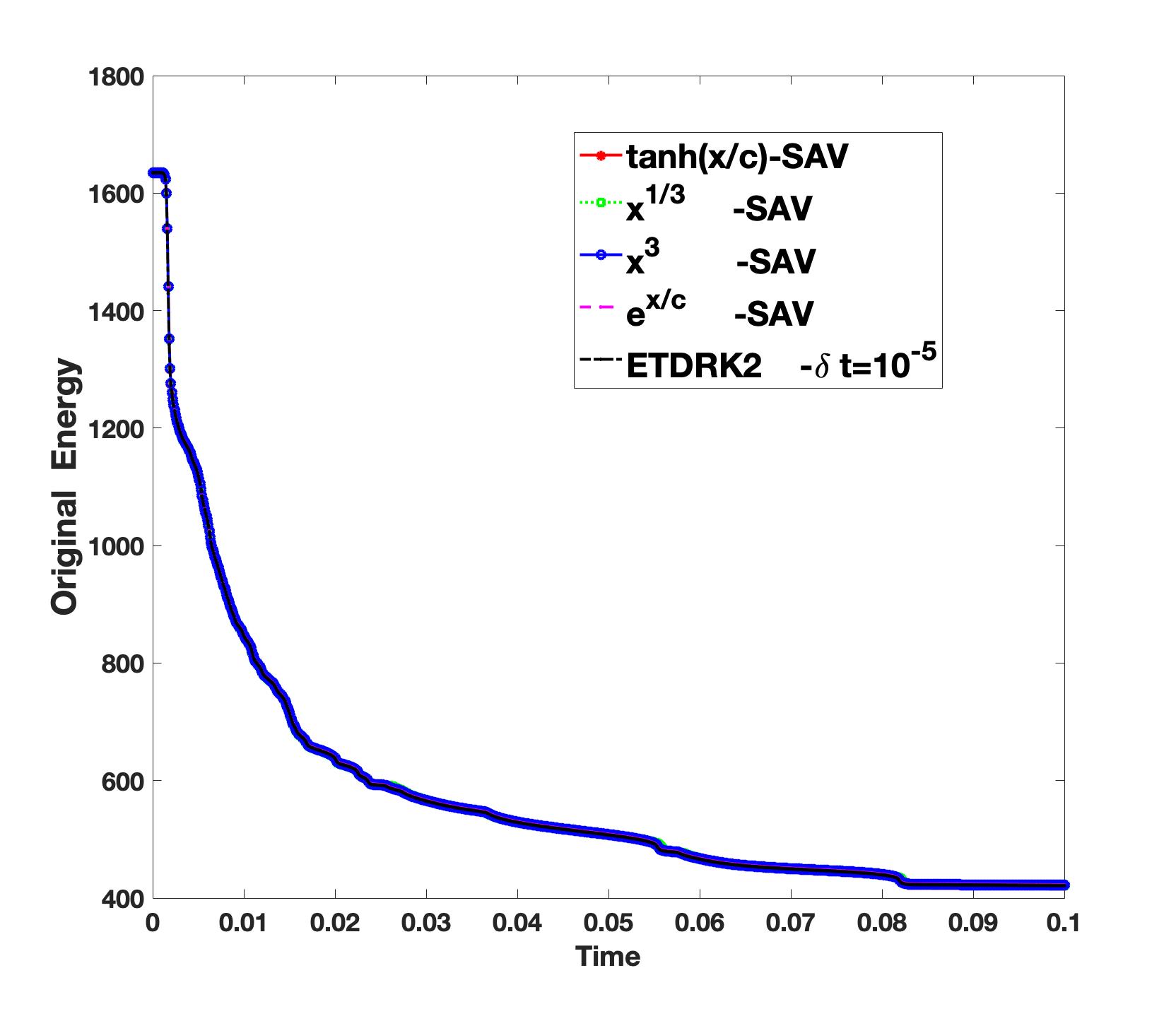}
\includegraphics[width=0.45\textwidth,clip==]{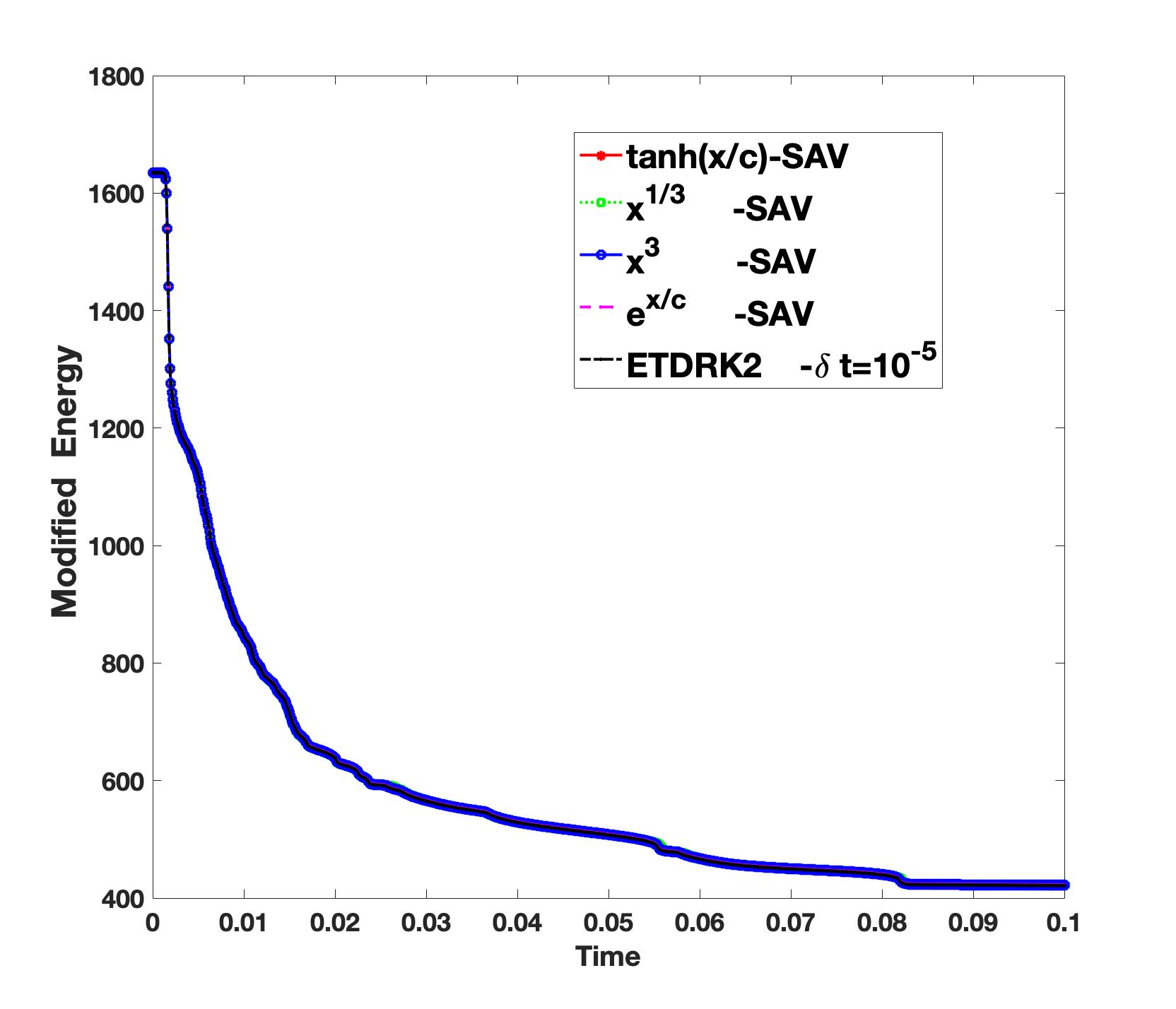}
\caption{Left: Evolutions of original energy by using various SAV approaches and ETDRK2  for Fig,\,\ref{CH_contour}; Right: Evolutions of the corresponding modified energy of various SAV approaches.}\label{G_SAV_energy}
\end{figure}

In the  Fig.\,\ref{CH_contour}, we plot the dynamic evolution of  phase separation for Cahn-Hilliard equation by using different BDF2 schemes of  the first approach  with $\delta t=10^{-5}$ and scheme  ETDRK2 \cite{cox2002exponential} with $\delta t=10^{-5}$.  We observe that all the  G-SAV of first approach   and ETDRK2 approaches lead to indistinguishable $\phi$ in Fig,\,\ref{CH_contour}.  This indicates that the accuracy of  BDF2 schemes by using G-SAV approaches is comparable with ETDRK2.

In the Fig.\,\ref{G_SAV_energy}, the evolutions of original and modified energy computed by schemes G-SAV and ETDRK2  are depicted.  From this figure we find no visible difference  are observed for those energy curves which are consistent with the numerical results in  Fig.\,\ref{CH_contour}.

\subsubsection{\bf Convergence rate with given exact solution}
We  test the  convergence rate of BDF2 and Crank-Nicolson schemes using various G-SAV approaches for 2D Allen-Cahn equation  with the exact solution
\begin{equation}\label{exact}
\phi(x,y,t)=(\frac{\sin(2x)\cos(2y)}{4}+0.48)(1-\frac{\sin^2(t)}{2}). 
\end{equation} 
 The  manufactured  exact  solution \eqref{exact} are obtained by adding a force term $f(\bx,t)$ into Allen-Cahn equation.   In table \ref{table11} and table \ref{table22},  we show the $L^{\infty}$ errors  of $\phi$ between numerical solution and the given exact  solution  with different time steps which are computed by BDF2 and Crank-Nicolson schemes of the first approach with $G=\tanh(\frac{x}{c})$  and $G=x^{3}$,  where the constant $c=10^4$.   Similar with table \ref{table11} and table \ref{table22},  we show the  
 the $L^{\infty}$ errors  of $\phi$ between numerical solution and the given exact  solution by using the second approach with same functions $G$ in table \ref{table1} and table \ref{table2}.  We observe that both first and second approaches obtain  second-order convergence rates in time.

\begin{table}[ht!]
\centering
\begin{tabular}{r||c|c|c|c}
\hline
$\delta t$            & {$BDF2-\tanh(\frac{x}{c})$}  & Order & {$CN2-\tanh(\frac{x}{c})$} & Order    \\ \hline
$8\times 10^{-4}$    &$5.41E(-4)$  & $-$   &$5.77E(-4)$ &$-$       \\\hline
$4\times 10^{-4}$    &$1.35E(-4)$  & $1.98$  &$1.46E(-4)$ &$2.00$       \\\hline
$2\times 10^{-4}$     &$3.37E(-5)$ & $1.99$&$3.66E(-5)$ &$2.00$    \\\hline
$1\times 10^{-4}$     &$8.41E(-6)$ &$1.99$  &$9.16E(-6)$ &$2.00$  \\\hline
$5\times 10^{-5}$  &$2.10E(-6)$ &$2.00$ &$2.29E(-6)$ &$2.00$   \\ \hline
$2.5\times 10^{-5}$  &$5.24E(-7)$&$1.99$ &$5.73E(-7)$ &$2.00$ \\\hline
$1.25\times 10^{-5}$  &$1.31E(-7)$&$2.00$ &$1.43E(-7)$ &$2.00$ \\\hline
\hline
\end{tabular}
\vskip 0.5cm
\caption{Accuracy test: with given exact solution for the Allen-Cahn equation. The $L^{\infty}$ errors at $t=0.1$ for the phase variables $\phi$  computed by the scheme based on schemes BDF2 and Crank-Nicolson  using  {\bf the first  approach} with $G=\tanh(\frac{x}{c})$, $c=10^4$. }\label{table11}
\end{table}

\begin{table}[ht!]
\centering
\begin{tabular}{r||c|c|c|c}
\hline
$\delta t$            & {$BDF2 - x^3$}  & Order & {$CN2 - x^3$} & Order    \\ \hline
$8\times 10^{-4}$    &$1.47E(-3)$  & $-$   &$1.42E(-3)$ &$-$       \\\hline
$4\times 10^{-4}$    &$3.74E(-4)$  & $1.97$  &$3.70E(-4)$ &$1.94$       \\\hline
$2\times 10^{-4}$     &$9.37E(-5)$ & $1.99$&$9.48E(-5)$ &$1.96$    \\\hline
$1\times 10^{-4}$     &$2.33E(-5)$ &$2.00$  &$2.39E(-5)$ &$1.98$  \\\hline
$5\times 10^{-5}$  &$5.84E(-6)$ &$1.99$ &$6.00E(-6)$ &$1.99$   \\ \hline
$2.5\times 10^{-5}$  &$1.45E(-6)$&$2.00$ &$1.50E(-6)$ &$2.00$ \\\hline
$1.25\times 10^{-5}$  &$3.64E(-7)$&$1.99$ &$3.77E(-7)$ &$1.99$ \\\hline
\hline
\end{tabular}
\vskip 0.5cm
\caption{Accuracy test: with given exact solution for the Allen-Cahn equation. The $L^{\infty}$ errors at $t=0.1$ for the phase variables $\phi$  computed by the scheme based on schemes BDF2 and Crank-Nicolson  using {\bf the first  approach} with $G=x^3$. }\label{table22}
\end{table}

\begin{table}[ht!]
\centering
\begin{tabular}{r||c|c|c|c}
\hline
$\delta t$            & {$BDF2-\tanh(\frac{x}{c})$}  & Order & {$CN2-\tanh(\frac{x}{c})$} & Order    \\ \hline
$8\times 10^{-4}$    &$6.39E(-5)$  & $-$   &$1.11E(-4)$ &$-$       \\\hline
$4\times 10^{-4}$    &$1.56E(-5)$  & $2.03$  &$2.78E(-5)$ &$2.00$       \\\hline
$2\times 10^{-4}$     &$3.85E(-6)$ & $2.02$&$6.94E(-6)$ &$2.00$    \\\hline
$1\times 10^{-4}$     &$9.55E(-7)$ &$2.01$  &$1.73E(-6)$ &$2.00$  \\\hline
$5\times 10^{-5}$  &$2.38E(-7)$ &$2.00$ &$4.32E(-7)$ &$2.00$   \\ \hline
$2.5\times 10^{-5}$  &$5.96E(-8)$&$2.00$ &$1.08E(-7)$ &$2.00$ \\\hline
$1.25\times 10^{-5}$  &$1.53E(-8)$&$1.96$ &$2.73E(-8)$ &$1.98$ \\\hline
\hline
\end{tabular}
\vskip 0.5cm
\caption{Accuracy test: with given exact solution for the Allen-Cahn equation. The $L^{\infty}$ errors at $t=0.1$ for the phase variables $\phi$  computed by the scheme based on schemes BDF2 and Crank-Nicolson  using  {\bf the second  approach} with $G=\tanh(\frac{x}{c})$, $c=10^4$. }\label{table1}
\end{table}

\begin{table}[ht!]
\centering
\begin{tabular}{r||c|c|c|c}
\hline
$\delta t$            & {$BDF2 - x^3$}  & Order & {$CN2 - x^3$} & Order    \\ \hline
$8\times 10^{-4}$    &$6.28E(-5)$  & $-$   &$1.10E(-4)$ &$-$       \\\hline
$4\times 10^{-4}$    &$1.53E(-5)$  & $2.04$  &$2.77E(-5)$ &$1.99$       \\\hline
$2\times 10^{-4}$     &$3.77E(-6)$ & $2.02$&$6.91E(-6)$ &$2.00$    \\\hline
$1\times 10^{-4}$     &$9.36E(-7)$ &$2.01$  &$1.72E(-6)$ &$2.00$  \\\hline
$5\times 10^{-5}$  &$2.33E(-7)$ &$2.01$ &$4.31E(-7)$ &$2.00$   \\ \hline
$2.5\times 10^{-5}$  &$5.84E(-8)$&$2.00$ &$1.07E(-7)$ &$2.01$ \\\hline
$1.25\times 10^{-5}$  &$1.50E(-8)$&$1.96$ &$2.71E(-8)$ &$1.98$ \\\hline
\hline
\end{tabular}
\vskip 0.5cm
\caption{Accuracy test: with given exact solution for the Allen-Cahn equation. The $L^{\infty}$ errors at $t=0.1$ for the phase variables $\phi$  computed by the scheme based on schemes BDF2 and Crank-Nicolson  using {\bf the second  approach} with $G=x^3$. }\label{table2}
\end{table}

\subsubsection{Cahn-Hilliard equation with singular potential}
In this subsection, we show the validation for Cahn-Hilliard equation with logarithmic (singular) potential \cite{chen2019positivity}. The total free energy is 
\begin{equation}
E(\phi)=\int_{\Omega}\frac{\eps^2}{2}|\Grad\phi|^2+F(\phi)d\bx,
\end{equation}
where the logarithmic Flory Huggins energy potential is  
\begin{equation}
F(\phi) = -\frac{\theta}{2}\phi^2 + (1+\phi)\ln(1+\phi) +(1-\phi)\ln(1-\phi).
\end{equation}
Parameter $\theta$ is a  positive constant which is associated with diffusive interface. The chemical potential for $H^{-1}$ gradient flow is 
\begin{equation}
\mu=\ln(1+\phi)-\ln(1-\phi) -\theta \phi- \eps^2\Delta \phi. 
\end{equation}
We consider the second approach by choosing  $G=e^{\frac{x}{c}}$ with $c=10^4$ and  define the new variable $r$ as 
\begin{equation}
r = e^{\frac{\int_{\Omega} -\frac{\theta}{2}\phi + (1+\phi)\ln(1+\phi) +(1-\phi)\ln(1-\phi)d\bx}{c}}=G(\frac{\int_{\Omega}F(\phi)d\bx}{c}),
\end{equation}
where $c$ is a positive constant.
Compared with original SAV approach and  IEQ approach,  the exponential-SAV approach  eliminates the constraint: $\int_{\Omega}F(\phi)d\bx+c>0$.  In Fig.\,\ref{spin_1}  we simulate the phase separation at various time  by choosing  phase parameters  $\theta=3$ and $\eps^2=0.002$ where initial condition is \eqref{ini_rand}.

\begin{figure}
\centering
\subfigure[$t=0$.]{\includegraphics[width=0.22\textwidth,clip==]{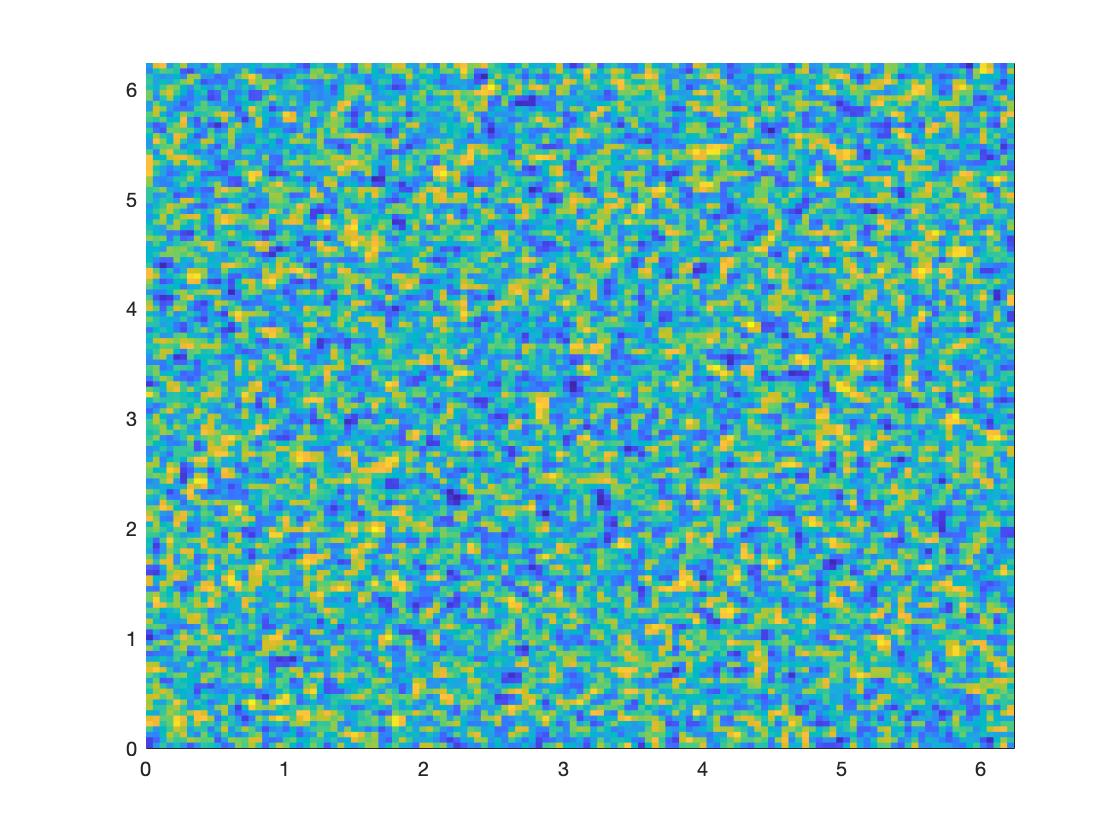}\hskip 0cm}
\subfigure[$t=0.025$.]{\includegraphics[width=0.22\textwidth,clip==]{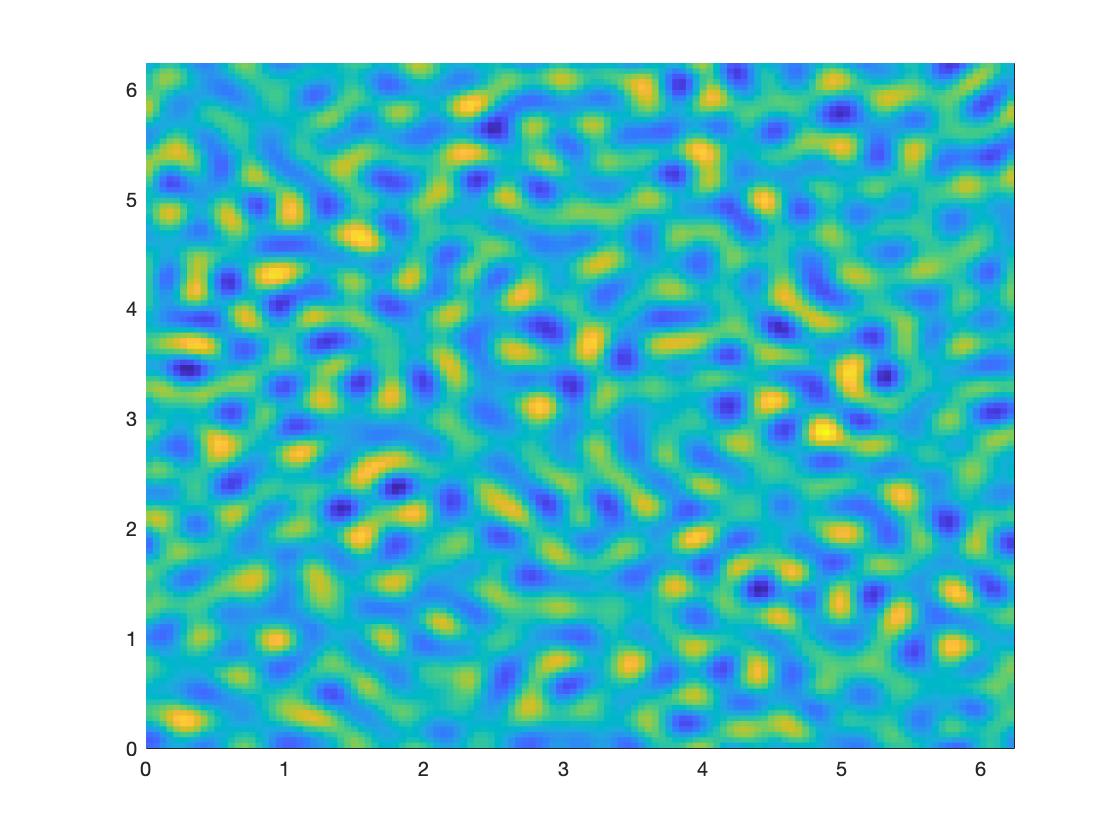}\hskip 0cm}
\subfigure[$t=0.05$.]{\includegraphics[width=0.22\textwidth,clip==]{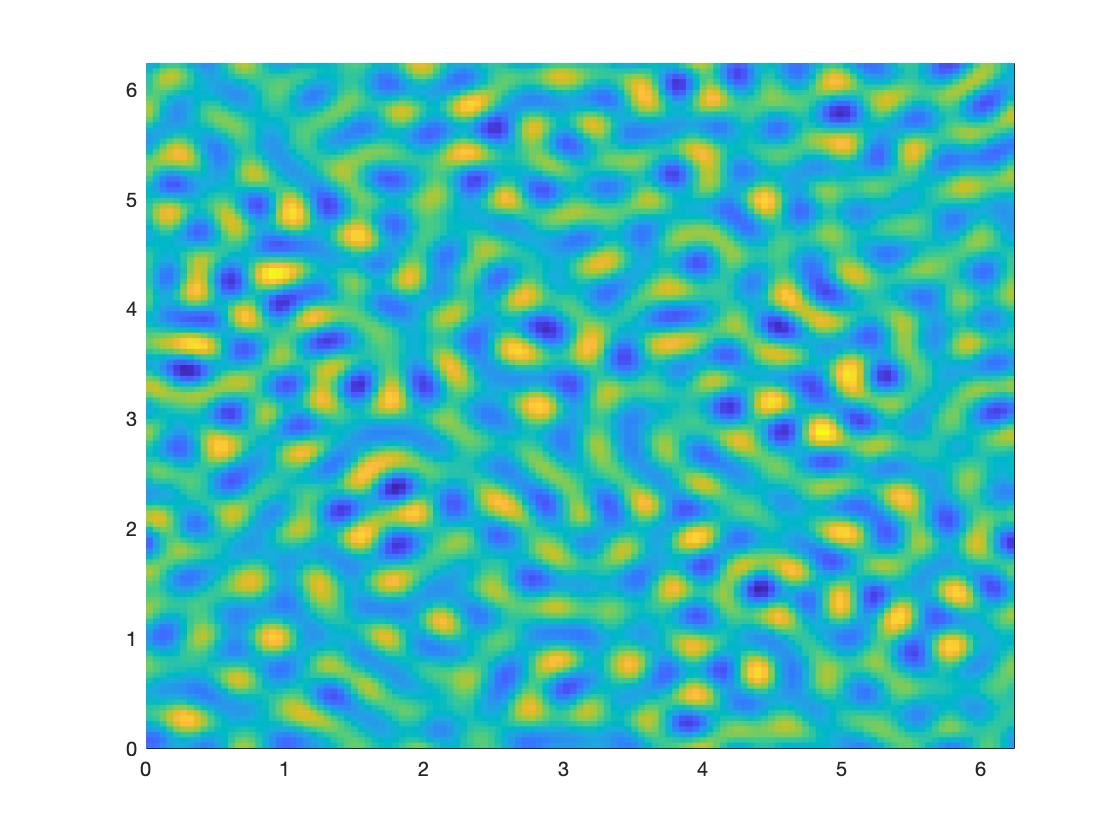}\hskip 0cm}
\subfigure[$t=0.1$.]{\includegraphics[width=0.22\textwidth,clip==]{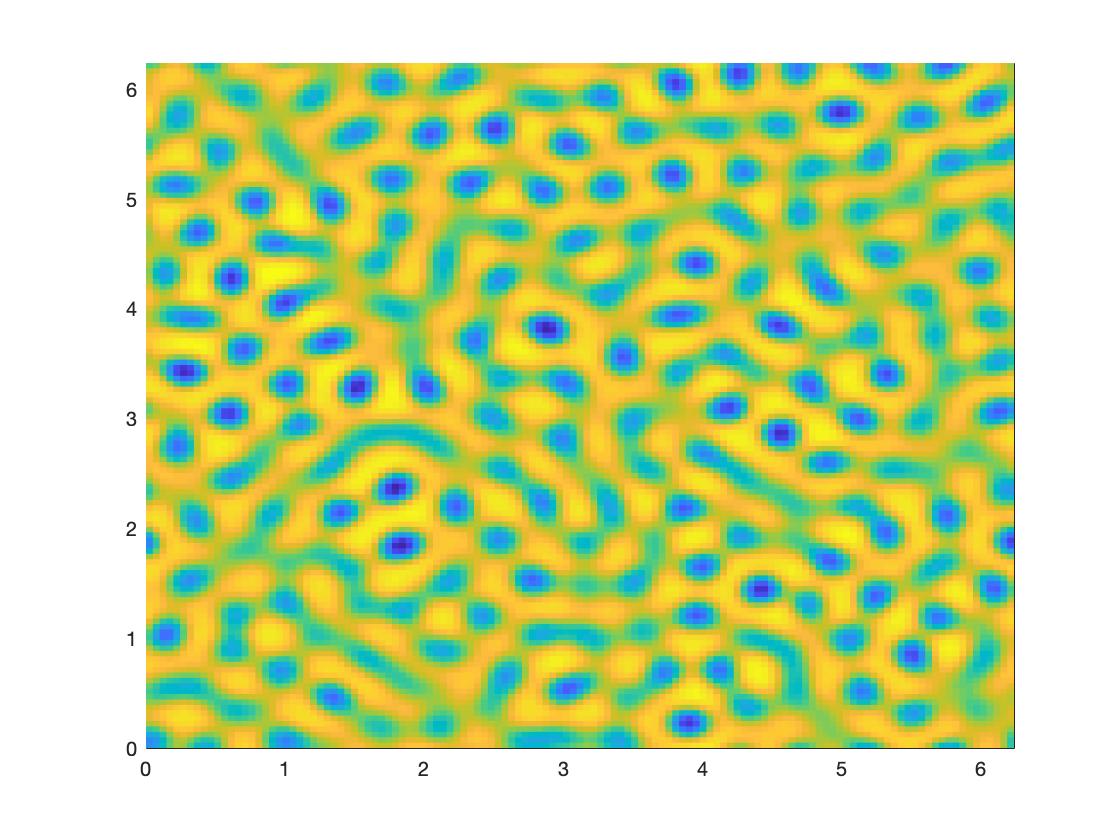}\hskip 0cm}
\subfigure[$t=0.4$.]{\includegraphics[width=0.22\textwidth,clip==]{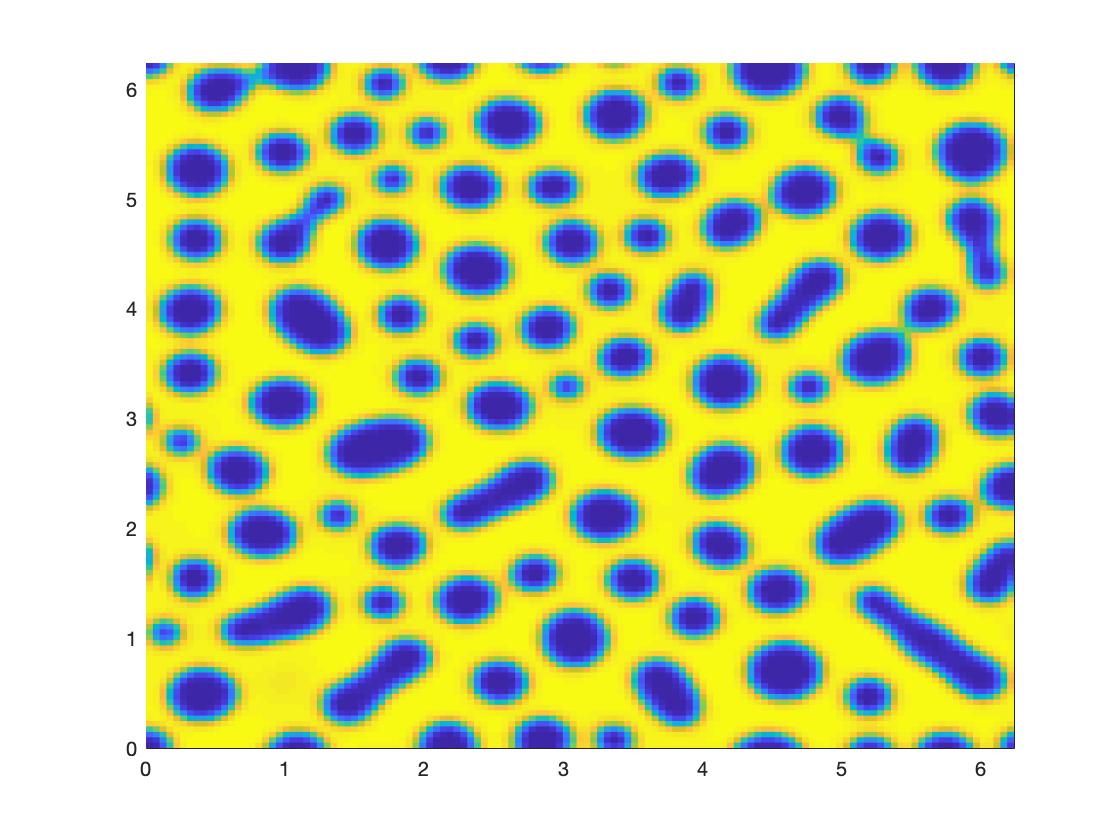}\hskip 0cm}
\subfigure[$t=3$.]{\includegraphics[width=0.22\textwidth,clip==]{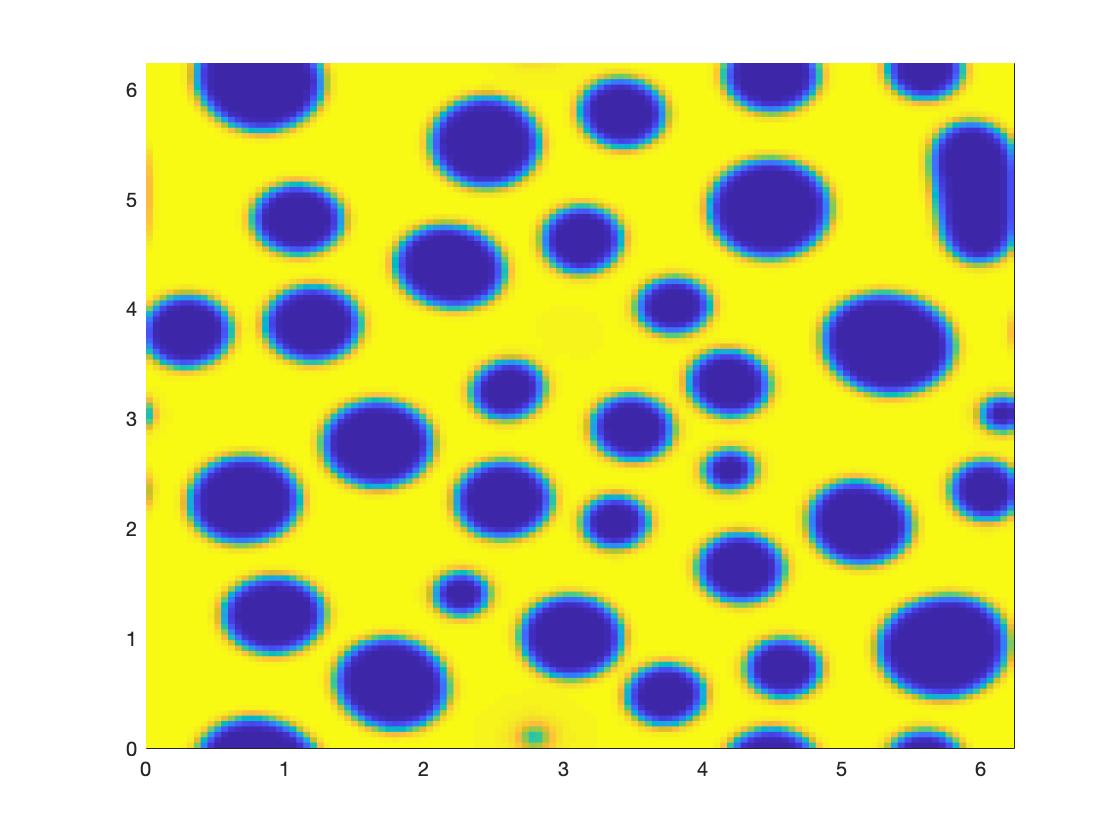}\hskip 0cm}
\subfigure[$t=6$.]{\includegraphics[width=0.22\textwidth,clip==]{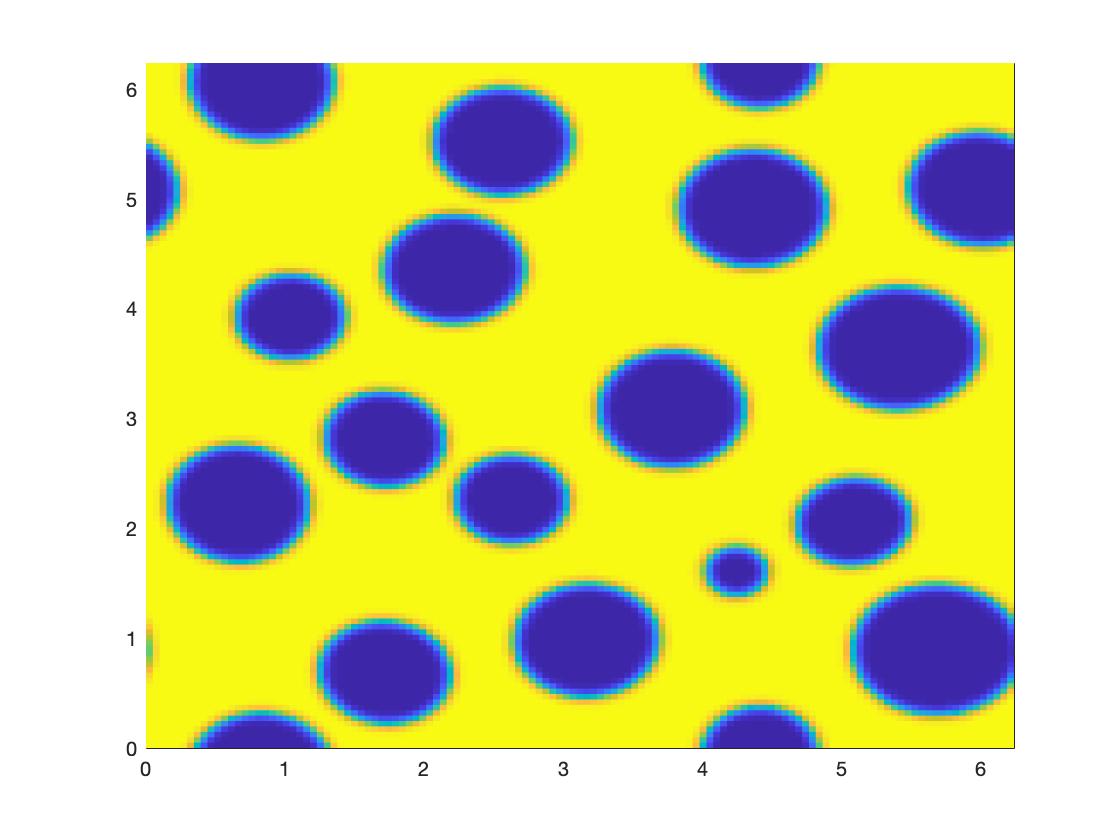}\hskip 0cm}
\subfigure[$t=10$.]{\includegraphics[width=0.22\textwidth,clip==]{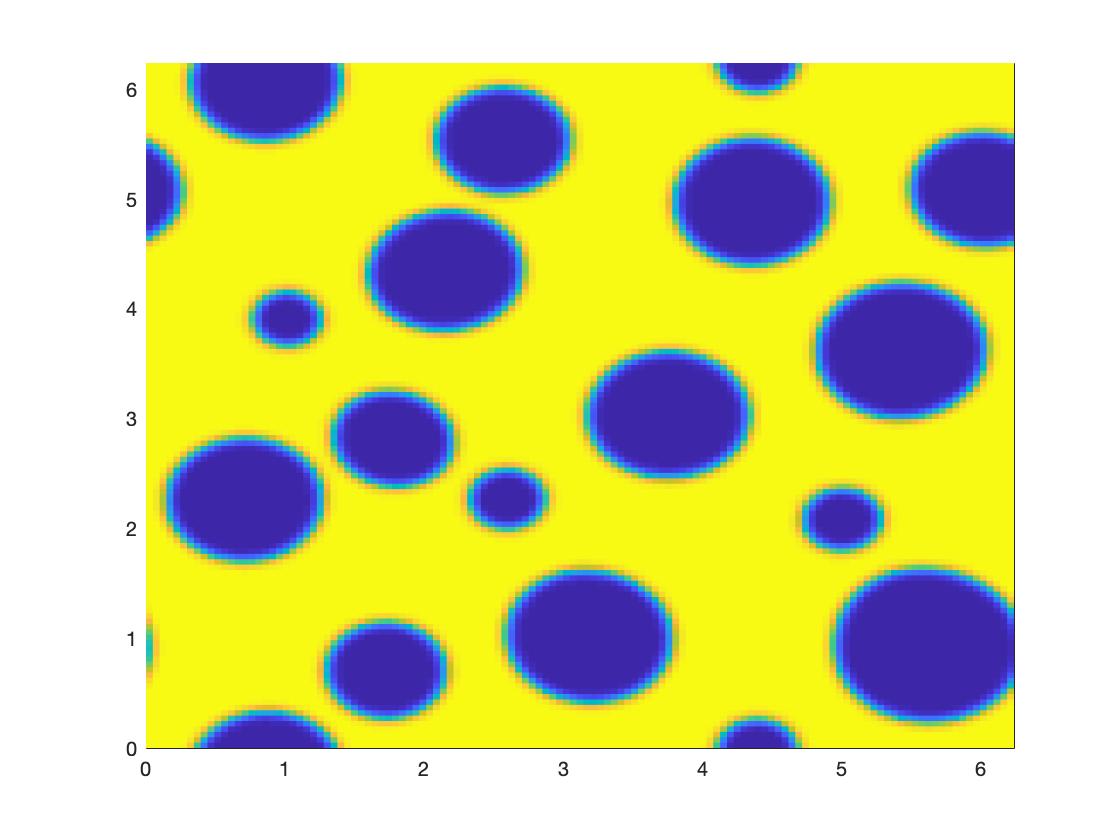}\hskip 0cm}
\caption{Dynamical evolution of the phase variables $\phi$ for the Cahn-Hilliard  model with logarithmic potential}\label{spin_1}
\end{figure}

\subsection{Molecular beam epitaxial (MBE) without slope selection}
As we mentioned above, the G-SAV approach takes big advantages of dealing with the model  where the nonlinear potential or  free energy are  unbounded from below . For example,  the molecular beam epitaxial (MBE)  without slope selection \cite{villain1991continuum}, where 
the total free energy is $E(\phi)=\int_{\Omega}\frac{\eps^2}{2}|\Delta \phi|^2+F(\phi)dx$, and 
the nonlinear potential is 
\begin{equation}\label{ori:ene2}
F(\phi)=-\frac{1}{2}\ln(1+|\nabla \phi|^2).
\end{equation}
Especially,  the $L^2$ gradient flow with respect to the free energy above is  
\begin{eqnarray}
&&\phi_t =-M\frac{\delta E(\phi)}{\delta \phi}= -M\big( \epsilon^2\Delta^2 \phi+F'(\phi)\big), \label{MBE:1}
\end{eqnarray}
 with periodic  boundary conditions:
where $\bf n$ is the unit outward normal on the boundary $\partial\Omega$.
  In the above,  $M$ is a mobility constant, and $F'(\phi)= \Grad \cdot\Big(\frac{\Grad \phi}{1+|\Grad \phi|^2}\big)$. 
  
  One has to use energy splitting method \cite{cheng2019highly} or stabilized method \cite{hou2019variant} to deal with the nonlinear part of energy. However,  G-SAV  approach can be directly applied since there is no requirement of free energy to be  bounded  from below.  
   For example, we consider the  $\tanh$-SAV approach,  the new variable $r$ is set to be 
  \begin{equation}
  r = \tanh(\frac{\int_{\Omega}-\frac{1}{2}\ln(1+|\nabla \phi|^2)d\bx}{c})=G(\frac{\int_{\Omega}F(\phi)d\bx}{c}),
  \end{equation}
where $c$ is a positive constant. Notice that we can choose any invertible  $G=x^3,x^{\frac{1}{3}},e^{\frac{x}{c}},\cdots$,  whose domain are  $(-\infty,\infty)$.
For example,  a second-order BDF2 scheme based on the $\tanh$-SAV  approach is:
\begin{eqnarray}
&&\frac{3\phi^{n+1}-4\phi^n+\phi^{n-1}}{2\delta t} +M\big( \epsilon^2\Delta^2 \phi^{n+1}+\frac{r^{\dagger,n}}{G(\int_{\Omega}F(\phi^{\dagger,n})d\bx)}F'( \phi^{\dagger,n})\big)=0,\label{mbe:bdf1}\\
&&\frac{3(G^{-1}(r))^{n+1}-4(G^{-1}(r))^n+(G^{-1}(r))^{n-1}}{2\delta t}\nonumber \\&&\hskip 1cm=\frac{r^{\dagger,n}}{G(\int_{\Omega}F(\phi^{\dagger,n})d\bx)}(F'(\phi^{\dagger,n}),\frac{3\phi^{n+1}-4\phi^n+\phi^{n-1}}{2\delta t}),\label{mbe:bdf2}
\end{eqnarray}
where $f^{\dagger,n}=2f^n-f^{n-1}$ for any sequence $\{f^n\}$.
Similarly with the proof of Theorem \ref{stable:1}, we can easily show that the above equation is unconditionally energy stable.  Meanwhile,  the scheme \eqref{mbe:bdf1}-\eqref{mbe:bdf2} can also be implemented very efficiently.

We simulate the coarsening dynamical process of MBE model \eqref{mbe:bdf1}-\eqref{mbe:bdf2},  where  a random initial condition  varying from $-0.001$ to $0.001$.
The phase parameters  are  set to be 
\begin{eqnarray}\label{Coarse_pa}
\epsilon= 0.03, \; \delta t=10^{-2},\;  M=1.
\end{eqnarray}
The computational domain is  $\Omega=[0,2\pi]^2$.  The total free energy will decay with  time growth where the decay rate behaves like  $-88\log(t)-124$  which are depicted in Fig. \ref{MBE_coarse}.  The coarse dynamic process  at various time  are  observed in Fig. \ref{NoSlopeCoarse} which are consistent with numerical results in \cite{cheng2019highly,chen2014linear}.

\begin{figure}
\centering
\subfigure[$t=0.005$.]{
\includegraphics[width=0.23\textwidth,clip==]
{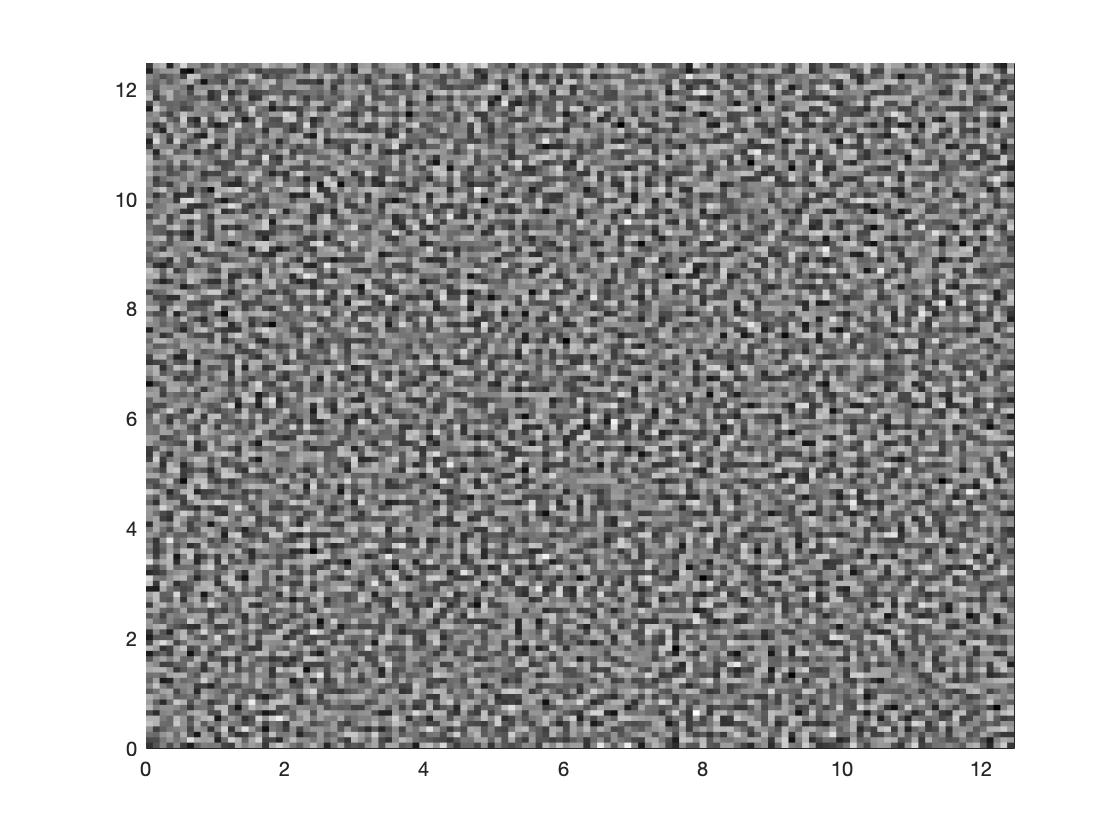}\hskip 0cm
\includegraphics[width=0.23\textwidth,clip==]{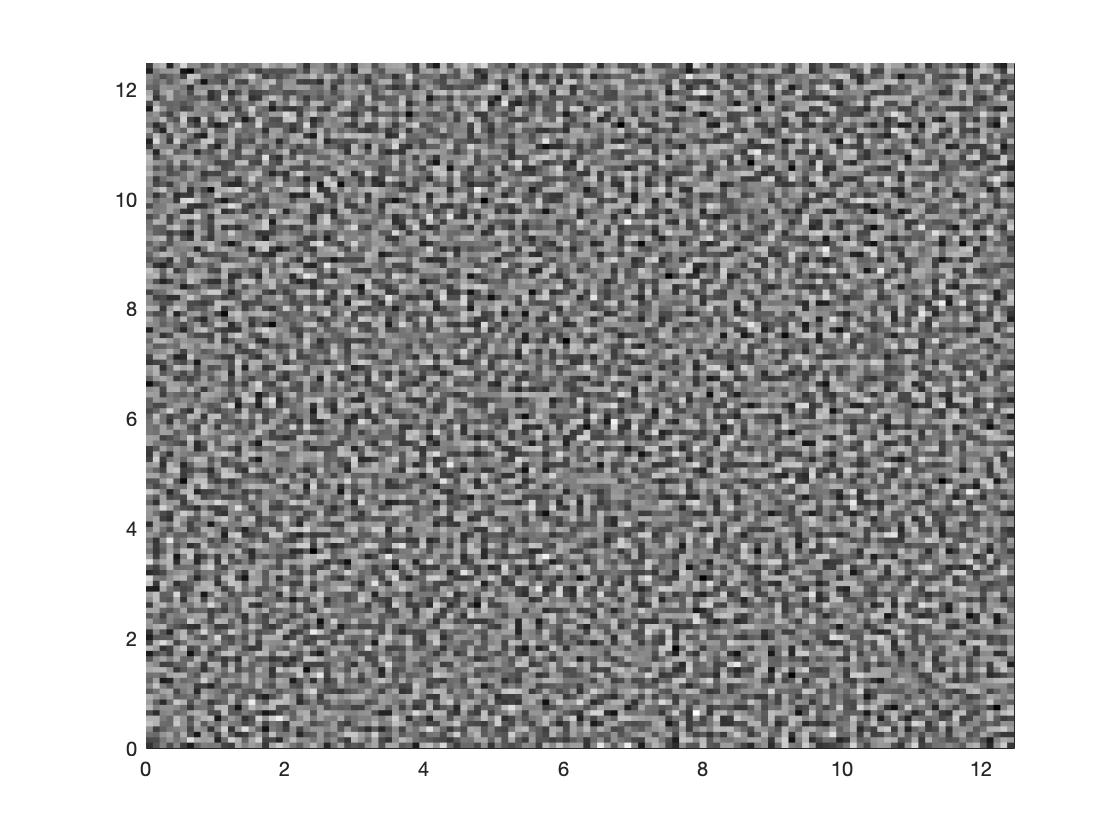}}
\subfigure[$t=1$.]{
\includegraphics[width=0.23\textwidth,clip==]{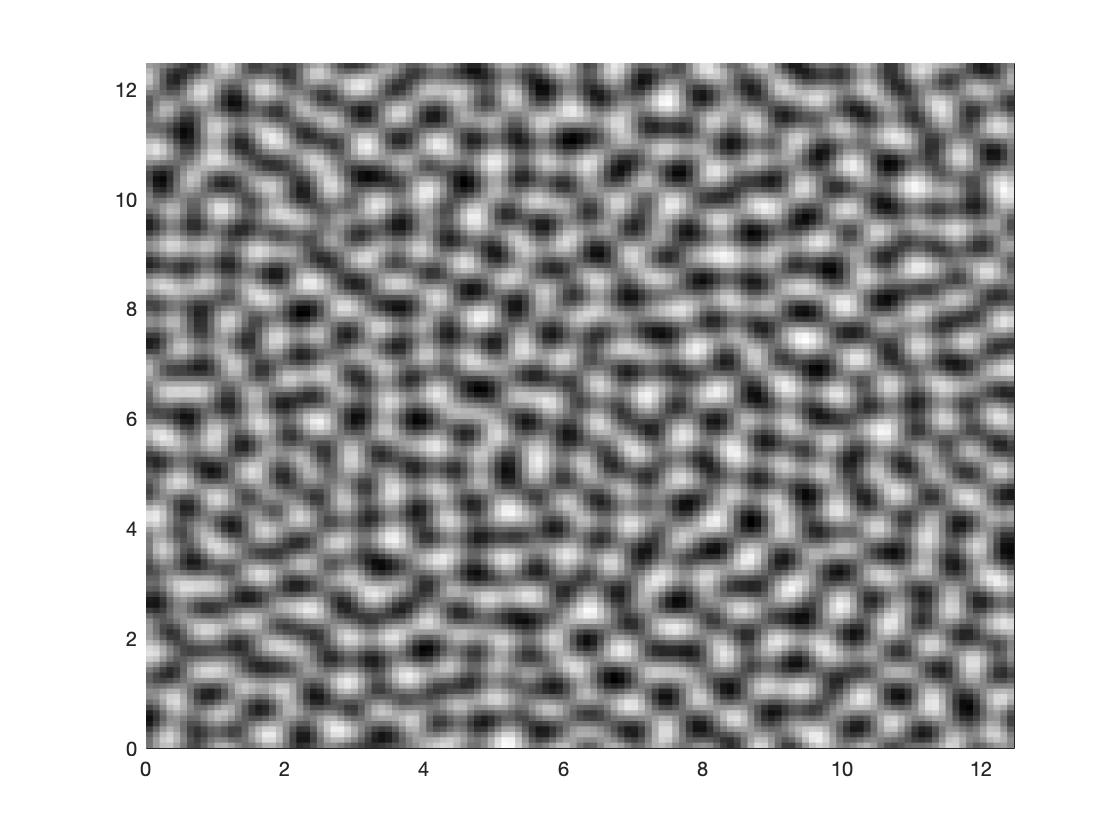}\hskip 0cm
\includegraphics[width=0.23\textwidth,clip==]{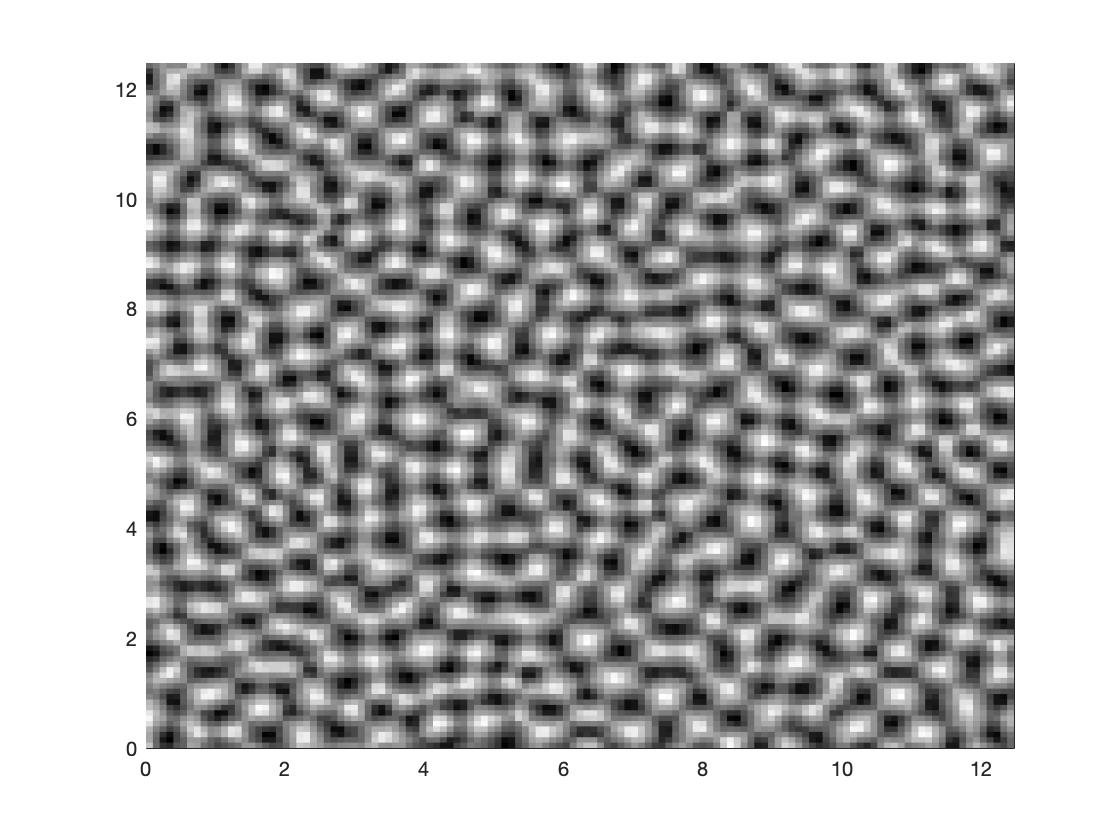}}
\subfigure[$t=6$.]{
\includegraphics[width=0.23\textwidth,clip==]{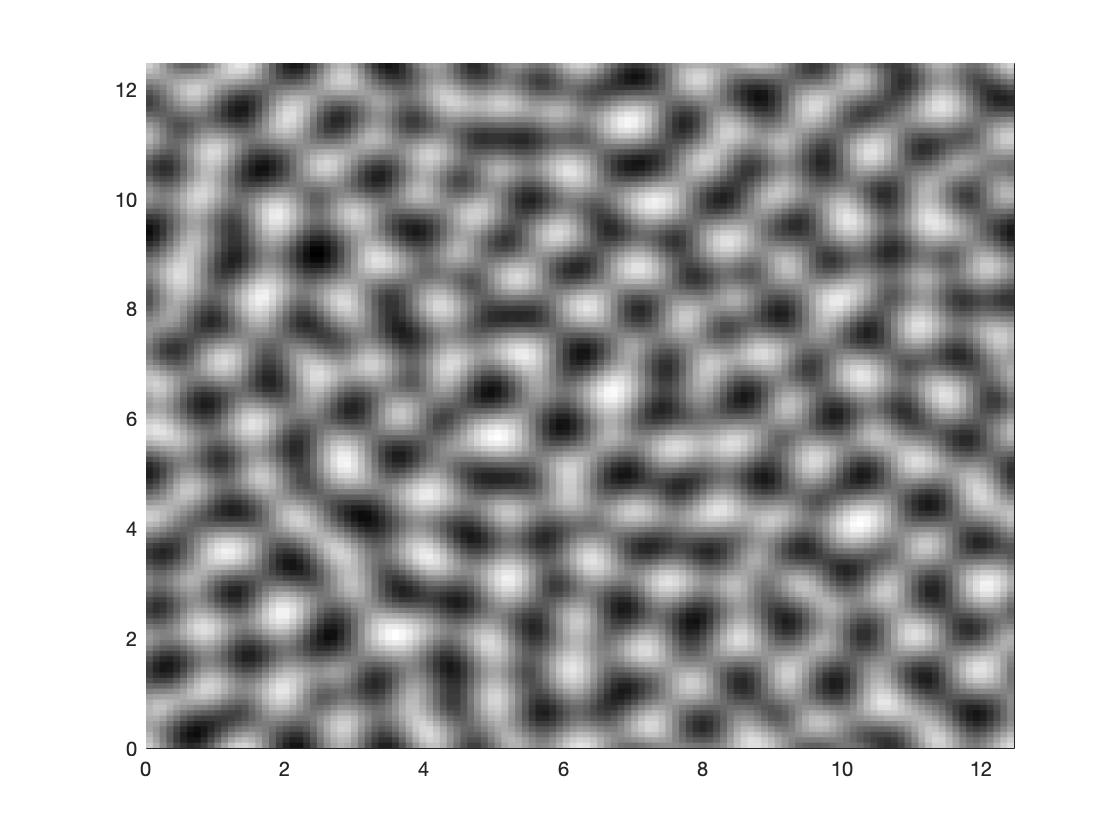}\hskip 0cm
\includegraphics[width=0.23\textwidth,clip==]{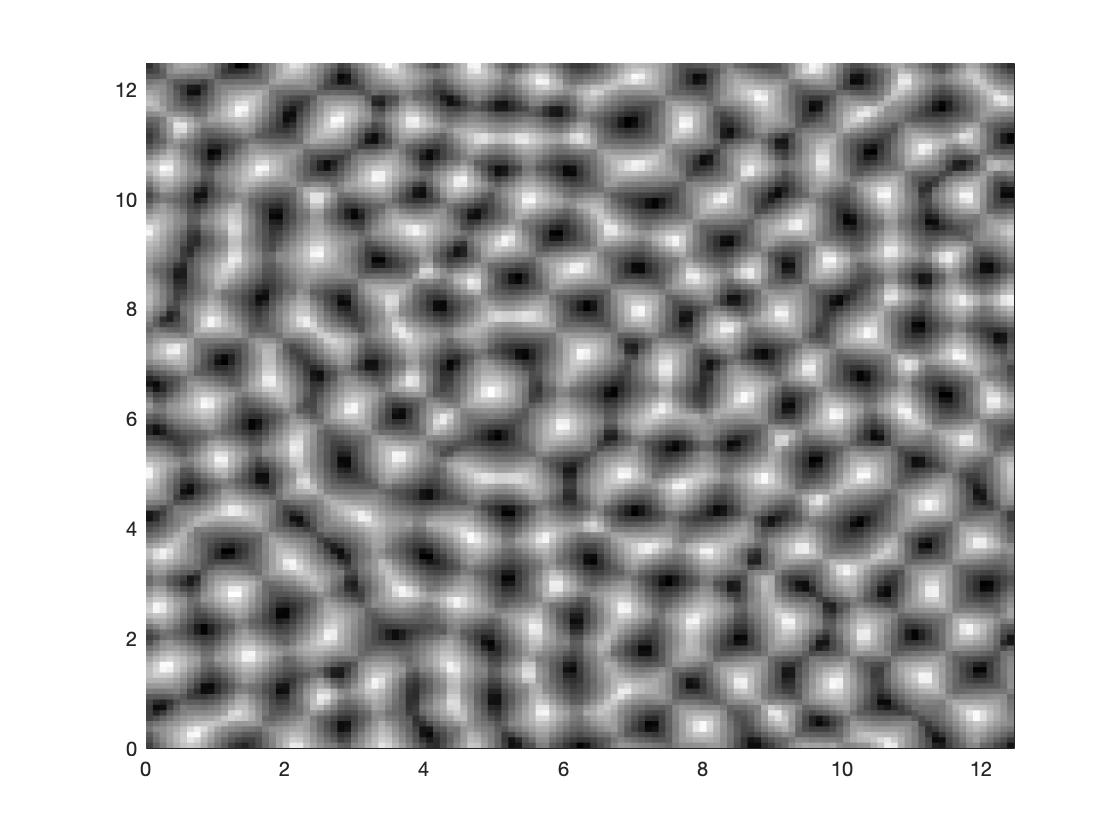}}
\subfigure[$t=20$.]{
\includegraphics[width=0.23\textwidth,clip==]{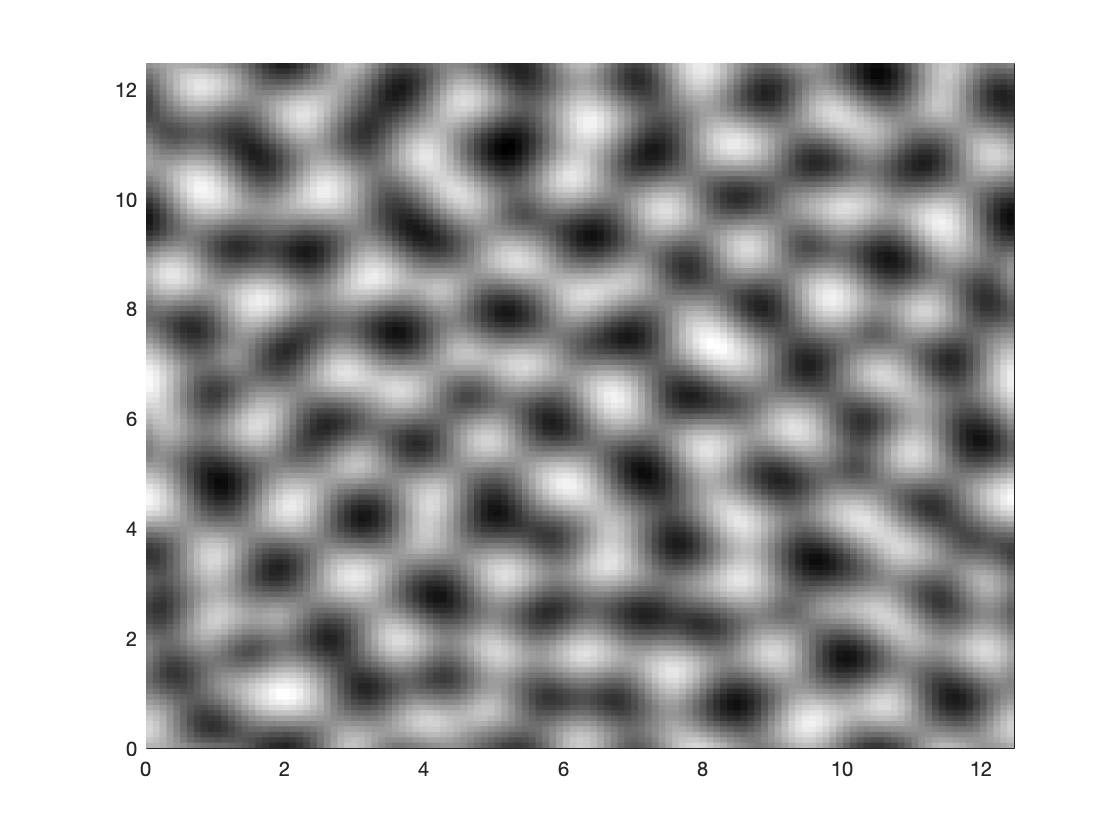}\hskip 0cm
\includegraphics[width=0.23\textwidth,clip==]{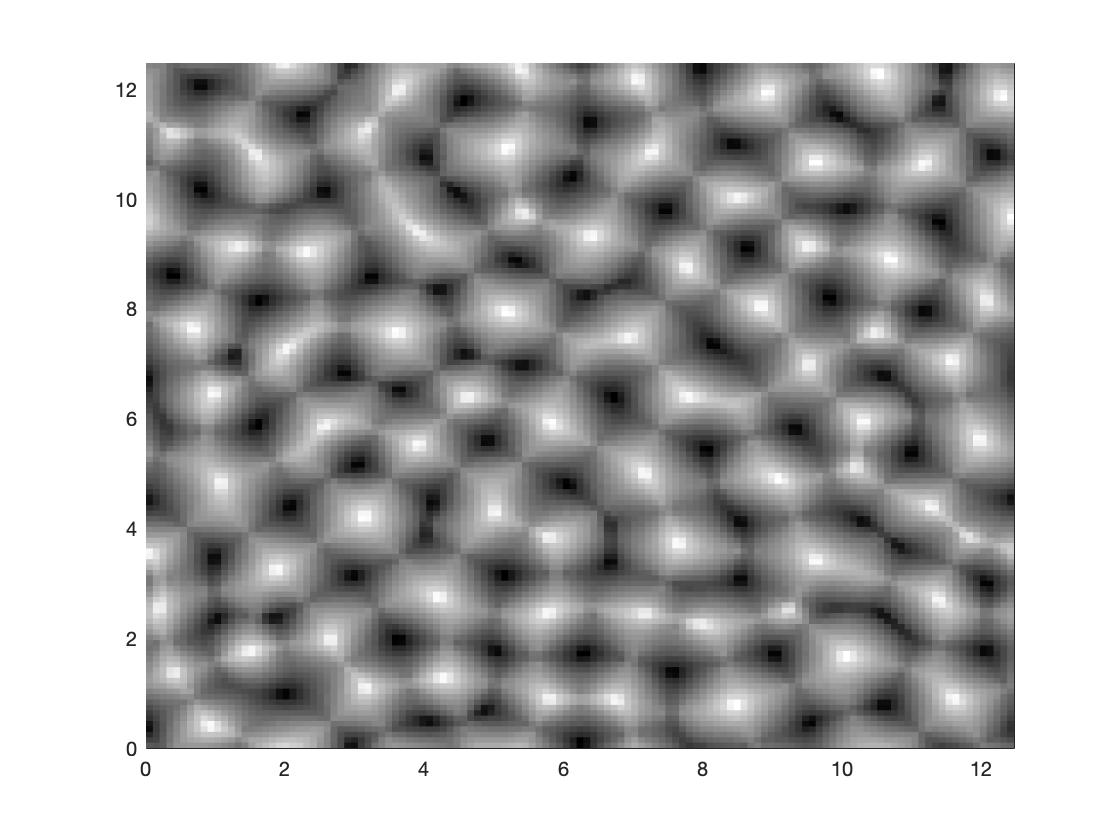}}
\subfigure[$t=100$.]{
\includegraphics[width=0.23\textwidth,clip==]{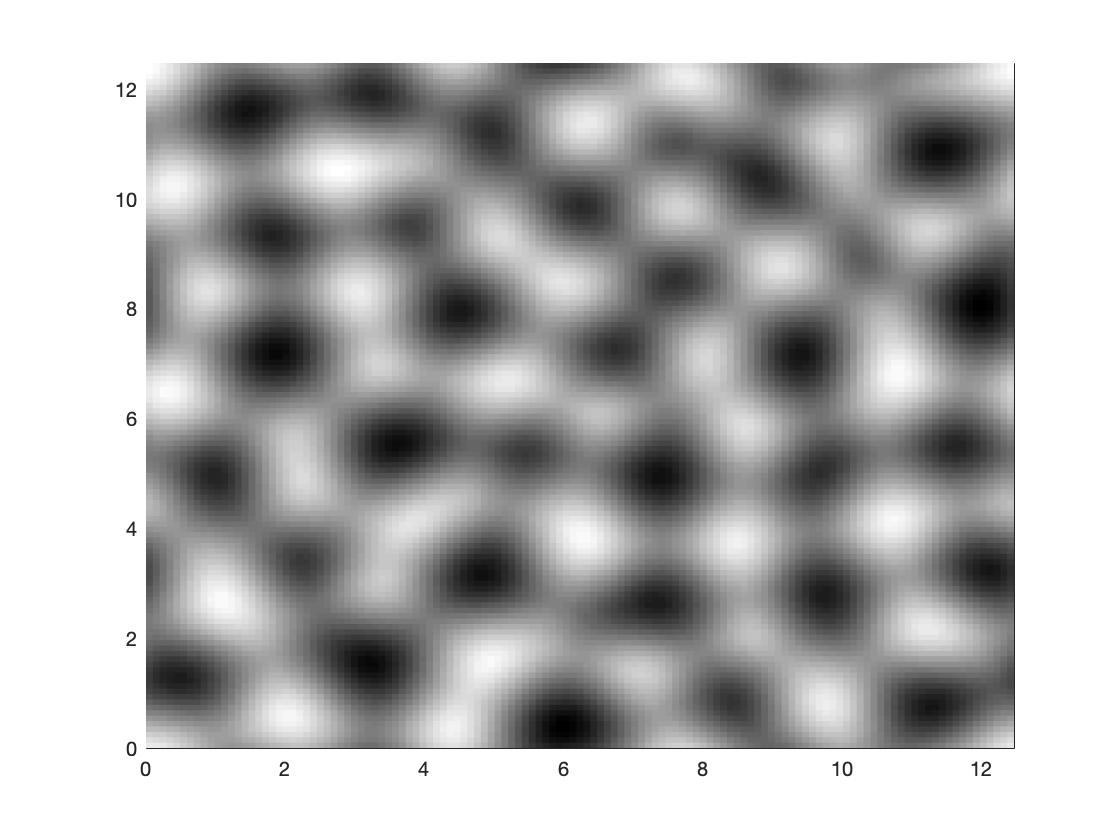}\hskip 0cm
\includegraphics[width=0.23\textwidth,clip==]{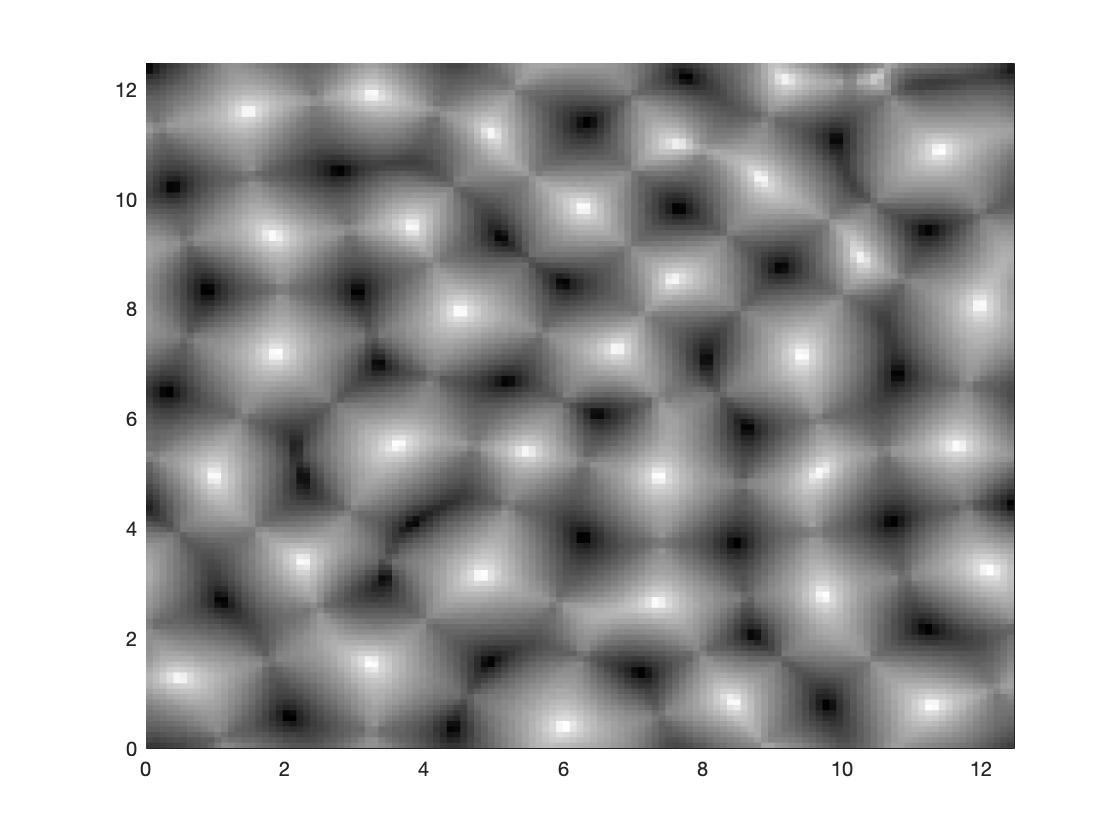}}
\subfigure[$t=600$.]{
\includegraphics[width=0.23\textwidth,clip==]{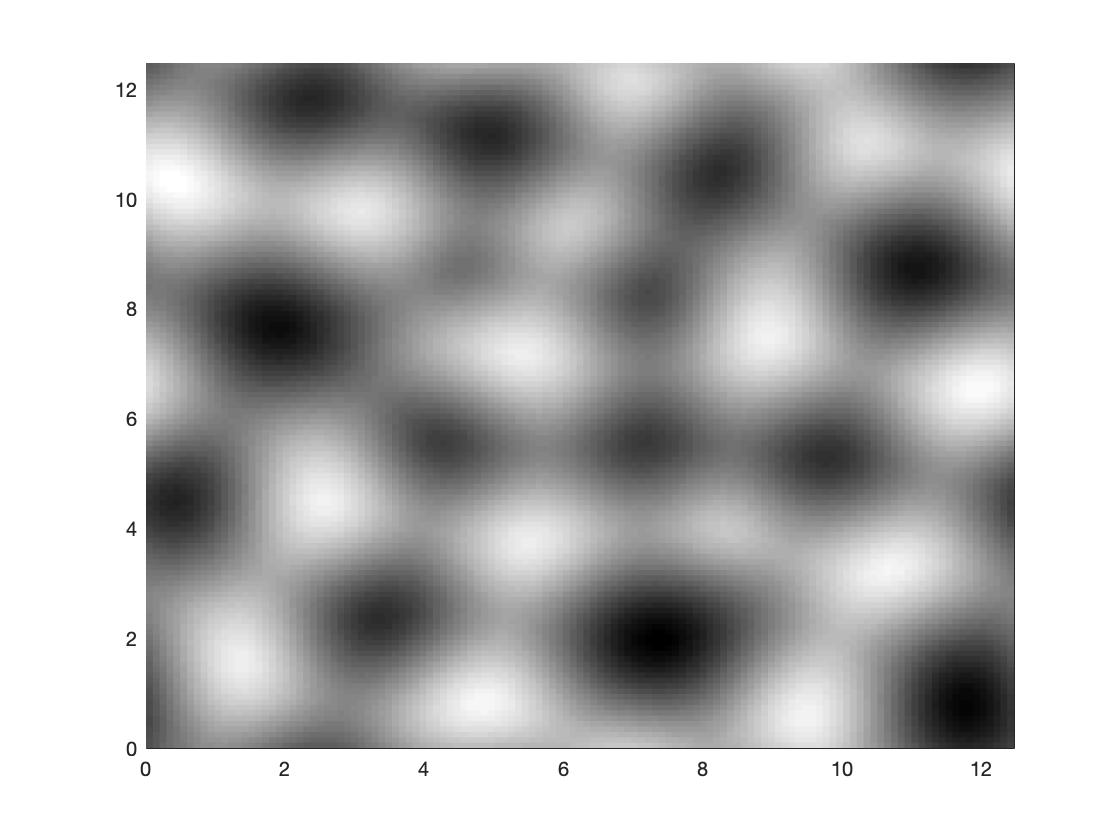}\hskip 0cm
\includegraphics[width=0.23\textwidth,clip==]{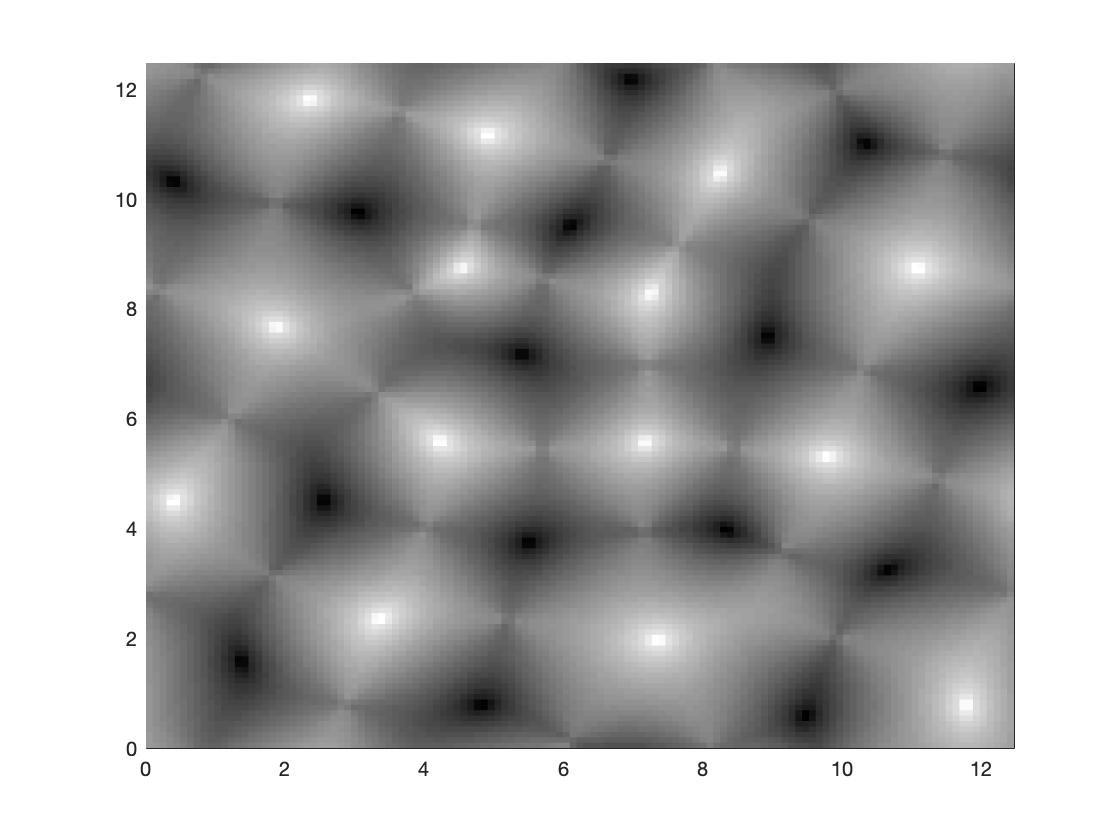}}
\caption{The isolines of the numerical solutions of the height function $\phi$ and its Laplacian $\Delta \phi$ for the model without slope selection with random initial condition. For each subfigure, the left is $\phi$ and the right is $\Delta \phi$ . Snapshots are taken at $t = 0.005 , 1 , 6 , 20 , 100 , 600$, respectively.}\label{NoSlopeCoarse}
\end{figure}

\begin{figure}[htbp]
\centering
\includegraphics[width=0.45\textwidth,clip==]{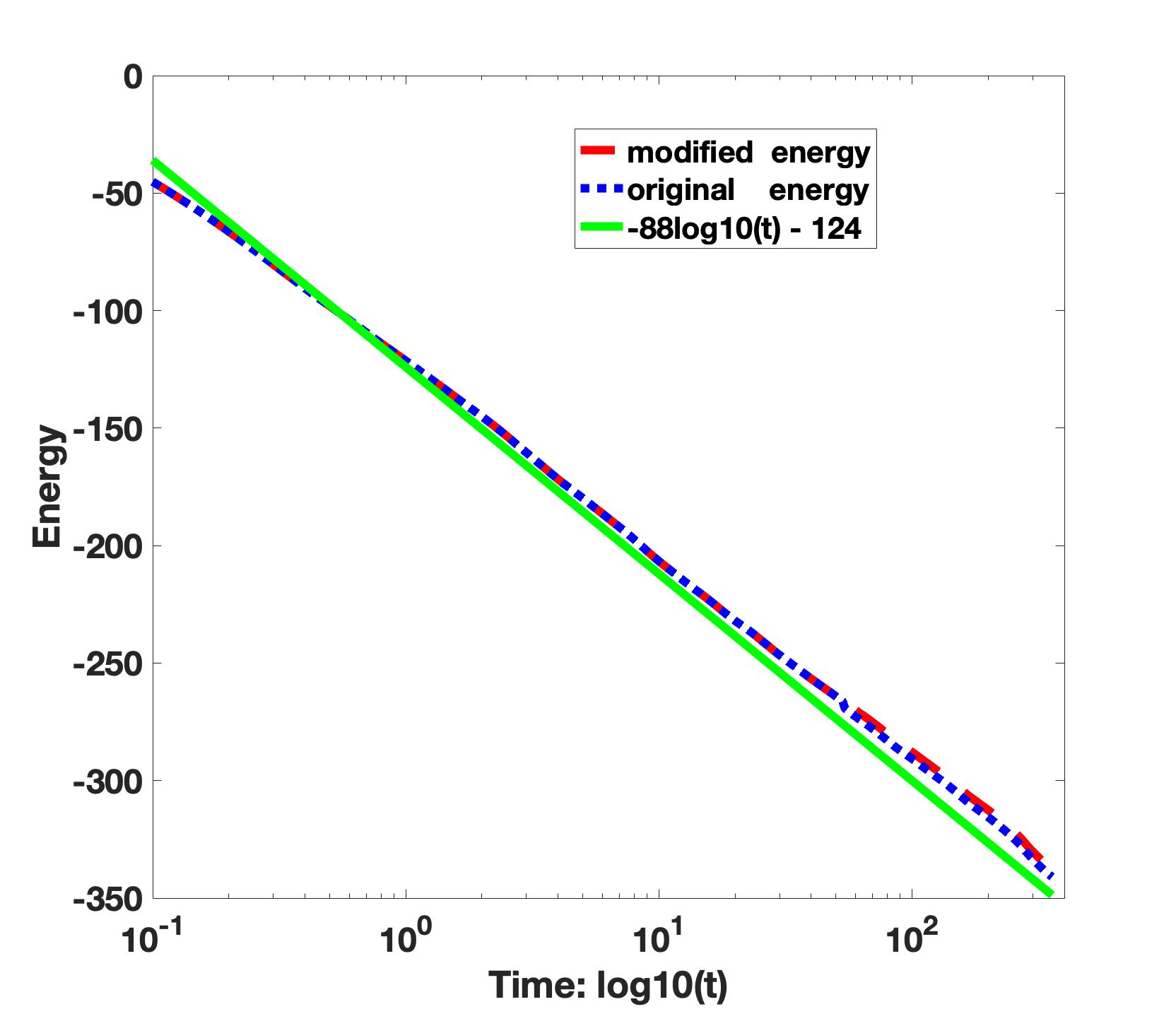}
\caption{The coarse dynamic  evolution of energy  with time for MBE model without slope selection.  }\label{MBE_coarse}
\end{figure}


\subsection{BCP model}
The (block copolymer) BCP or coupled Cahn-Hilliard system \cite{avalos2016frustrated,avalos2018transformation}  can be interpreted as a gradient flow as follows
\begin{eqnarray}
&&u_t=M_u\Delta\frac{\delta E(u,v)}{\delta u},  \label{ch2:1a}\\
&&v_t=M_v\Delta \frac{\delta E(u,v)}{\delta v},\label{ch2:3a} 
\end{eqnarray}
 with
 the  total free energy
 \begin{equation}\label{BCP:e}
 E_{\eps_u,\eps_v}(u,v)=\int_{\Omega}\frac{\eps^2_u}{2}|\Grad u|^2+\frac{\eps^2_v}{2}|\Grad v|^2+W(u,v)+\frac{\sigma}{2}|(-\Delta)^{-\frac 12}(v-\overline v)|^2d\bx,
 \end{equation}
  where
$$W(u,v)=\frac{(u^2-1)^2}{4}+\frac{(v^2-1)^2}{4}+\alpha uv+\beta uv^2+\gamma u^2v.$$
Indeed, one can easily check that 
$$\frac{\delta E(u,v)}{\delta u}=-\eps_u^2\Delta u +\frac{\delta W}{\delta u}=\mu_u,\quad \frac{\delta E(u,v)}{\delta v}=-\eps_u^2\Delta v +\frac{\delta W}{\delta v}-\sigma\Delta^{-1}(v-\overline v)=\mu_v.$$
The coupled Cahn-Hilliard equation describes a BCP particle is surrounded with homopolymer.  The order parameter $u$ describes these two components in the interval $[-1,1]$,   $-1$ represents the homopolymer rich domain and $1$ is the BCP-rich domain. Order parameter $v$  describes micro separation inside the BCP
domain which also acquires values from interval $[-1,1]$ with the end points corresponding to A-type BCP and B-type BCP.  $\eps_u$ and $\eps_v$ are corresponding diffusive interface parameters and $M_u$ and $M_v$ are mobility constants.

Now  we construct a second-order  numerical scheme by using G-SAV approach and  define  a new variable $r=G(\int_{\Omega}W(u,v)d\bx)$ where $G$ is a reversible function. Then the new total free energy is rewritten as 
\begin{equation}
 E_{\eps_u,\eps_v}(u,v)=\int_{\Omega}\frac{\eps^2_u}{2}|\Grad u|^2+\frac{\eps^2_v}{2}|\Grad v|^2+\frac{\sigma}{2}|(-\Delta)^{-\frac 12}(v-\overline v)|^2d\bx+G^{-1}\{G(\int_{\Omega}W(u,v)d\bx)\}.
 \end{equation}
As in the previous section, we can construct a second-order  G-SAV  scheme based on BDF2 version  for the above system.  
 Assuming that $u^{n-1}$, $u^{n}$ and $v^{n-1}$, $v^{n}$ are known, we find
$u^{n+1}$ and  $v^{n+1}$ as follows:
\begin{eqnarray}
&&\frac{3u^{n+1}-4u^n+u^{n-1}}{2\delta t}=M_u\Delta\mu^{n+1}_u, \label{ch:bdf2:1}\\
&&\mu^{n+1}_u=-\eps_u^2\Delta u^{n+1} +(\frac{\delta W}{\delta u})^{\dagger,n}\frac{r^{n+1}}{G(\int_{\Omega}W(u^{\dagger,n},v^{\star,n})d\bx)},\label{ch:bdf2:2}\\
&&\frac{3v^{n+1}-4v^n+v^{n-1}}{2\delta t}=M_v\Delta\mu^{n+1}_v,\label{ch:bdf2:3} \\
&&\mu^{n+1}_v=-\eps_v^2\Delta v^{n+1} +(\frac{\delta W}{\delta v})^{\dagger,n}\frac{r^{n+1}}{G(\int_{\Omega}W(u^{\dagger,n},v^{\dagger,n})d\bx)}\nonumber\\&& \hskip 1cm -\sigma\Delta^{-1}(v^{n+1}-\overline v)\label{ch:bdf2:4},\\
&&3G^{-1}(r^{n+1})-4G^{-1}(r^{n})+G^{-1}(r^{n-1})\label{ch:bdf2:5},\\
&&\hskip 1cm = \frac{r^{n+1}}{G(\int_{\Omega}W(u^{\dagger,n},v^{\dagger,n})d\bx)}\{((\frac{\delta W}{\delta u})^{\dagger,n},3u^{n+1}-4u^n+u^{n-1})\nonumber\\&&\hskip 1cm+((\frac{\delta W}{\delta v})^{\dagger,n},3v^{n+1}-4v^n+v^{n-1})\}.\nonumber
\end{eqnarray}
Where $f^{\dagger,n}=2f^n-f^{n-1}$ for any function $f$, and the boundary conditions are periodic. 
The G-SAV scheme \eqref{ch:bdf2:1}-\eqref{ch:bdf2:5} is energy stable and  can be solved following the  G-SAV  scheme \eqref{scheme:mod:three:CH} with multi-components.

Now we make  numerical experiments  to simulate the annealing process  \cite{avalos2016frustrated,avalos2018transformation}  of block copolymer  and  detect its  morphology transformations.  Phase variables for coupled Cahn-Hilliard equation  are chosen as 
\begin{equation}
\eps_u=0.075 \quad \eps_v=0.05 \quad \sigma =10 \quad \alpha=0.1 \quad \beta =-0.75 \quad \gamma=0,
\end{equation}
and  domain is set to be  $[-1,1]$. 
The initial conditions are taken as a randomly perturbed concentration field as follows:
\begin{eqnarray}\label{initial}
&&u(t=0)=\,{\rm rand}(x,y),\\
&&v(t=0)=\,{\rm rand}(x,y),
\end{eqnarray}
where the ${\rm rand}(x,y)$ is a uniformly distributed random function in $[-1,1]^2$ with zero mean.
Assuming that the yellow bulk presents  A-BCP particle and bule bulk presents  as B-BCP particle.  The numerical solutions in Fig.\,\ref{BCP} and Fig.\,\ref{BCP_2}  are computed by G-SAV scheme \eqref{ch:bdf2:1}-\eqref{ch:bdf2:5} with $G=e^{x/c}$ and $G=\sqrt{x+c}$.
From this two figures, it is observed that the contours of numerical solution are indistinguishable by using different functions $G$.  Phase variable $u$ presents  clearly the confined surface  for BCP particles. While variable $v$ describes the dynamic process of morphological transformation for BCP particles.
From Fig.\,\ref{BCP} and Fig.\,\ref{BCP_2}  the striped ellipsoids gradually  appears  at final stage as Fig.\,\ref{BCP}.  The steady morphology coincides  with the experimental results depicted  in Fig.\,\ref{refer_1} which also are observed in \cite{avalos2016frustrated,avalos2018transformation}.

\begin{figure}
\centering
\includegraphics[width=0.7\textwidth,clip==]{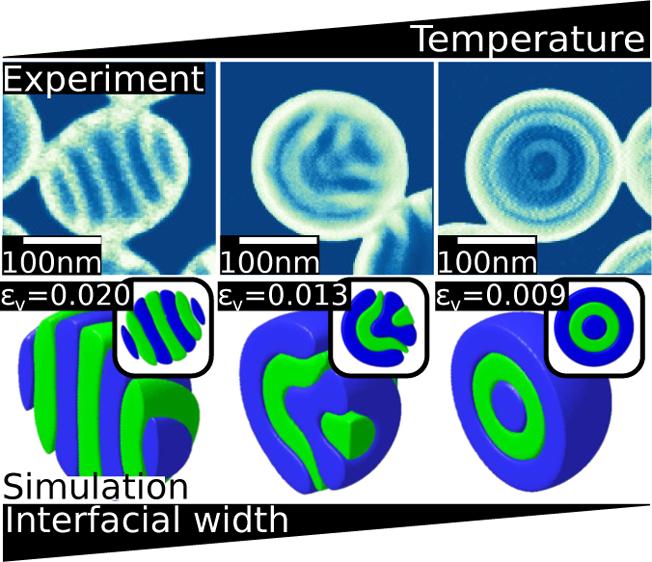}\hskip 0cm
\caption{Experimental results  and simulations at various temperature in \cite{avalos2016frustrated,avalos2018transformation}.}
\label{refer_1}
\end{figure}

\begin{figure}
\centering
\subfigure[$\bu:t=0.2$.]{\includegraphics[width=0.22\textwidth,clip==]{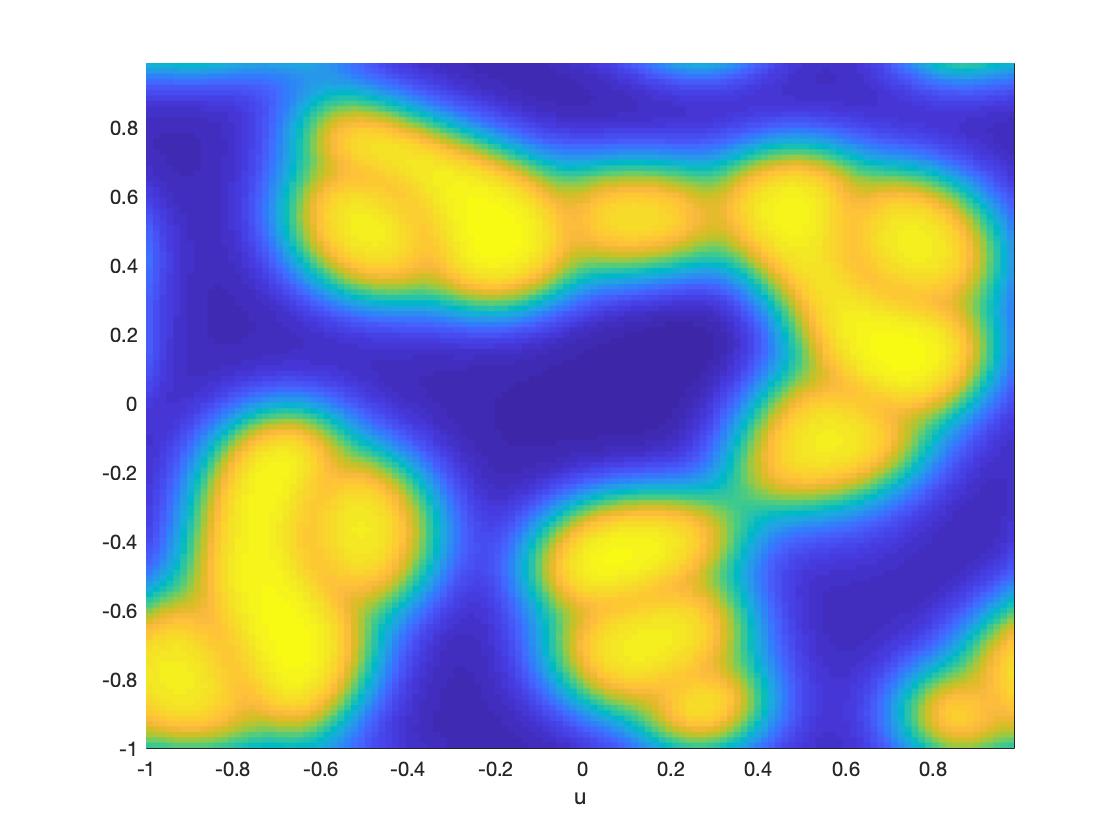}\hskip 0cm}
\subfigure[$\bv:t=0.2$.]{\includegraphics[width=0.22\textwidth,clip==]{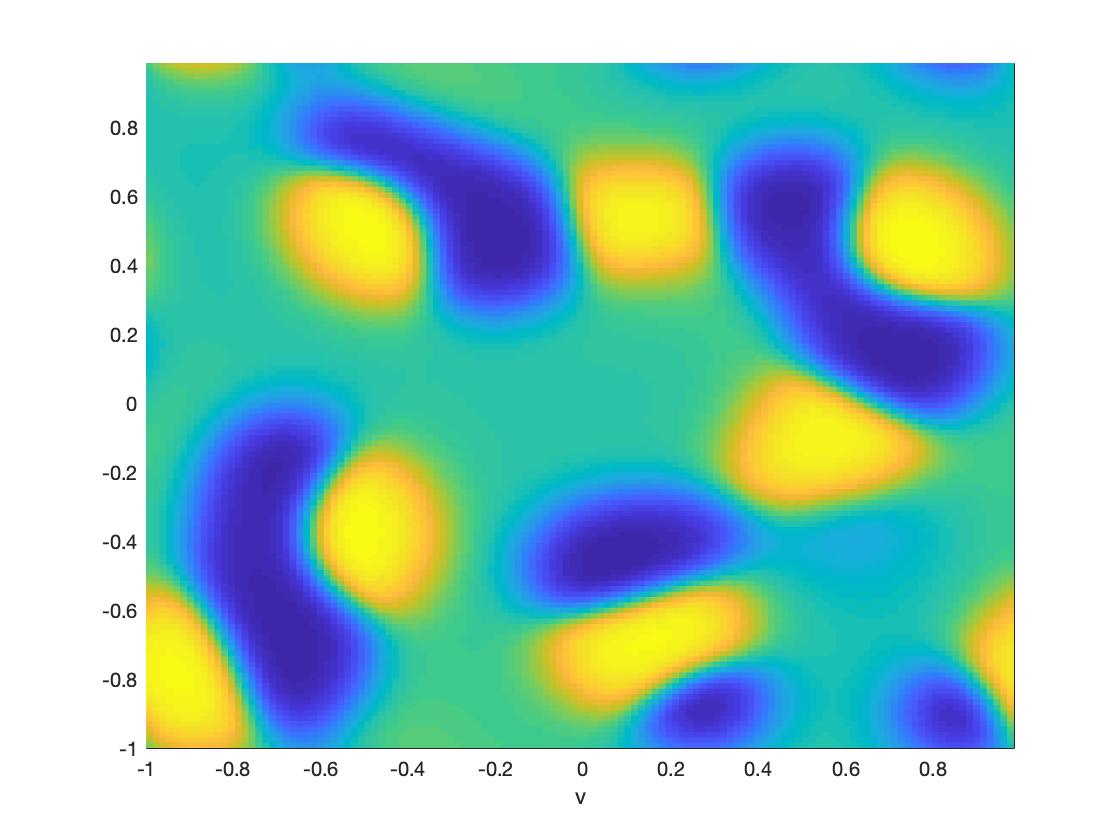}\hskip 0cm}
\subfigure[$\bu:t=0.5$.]{\includegraphics[width=0.22\textwidth,clip==]{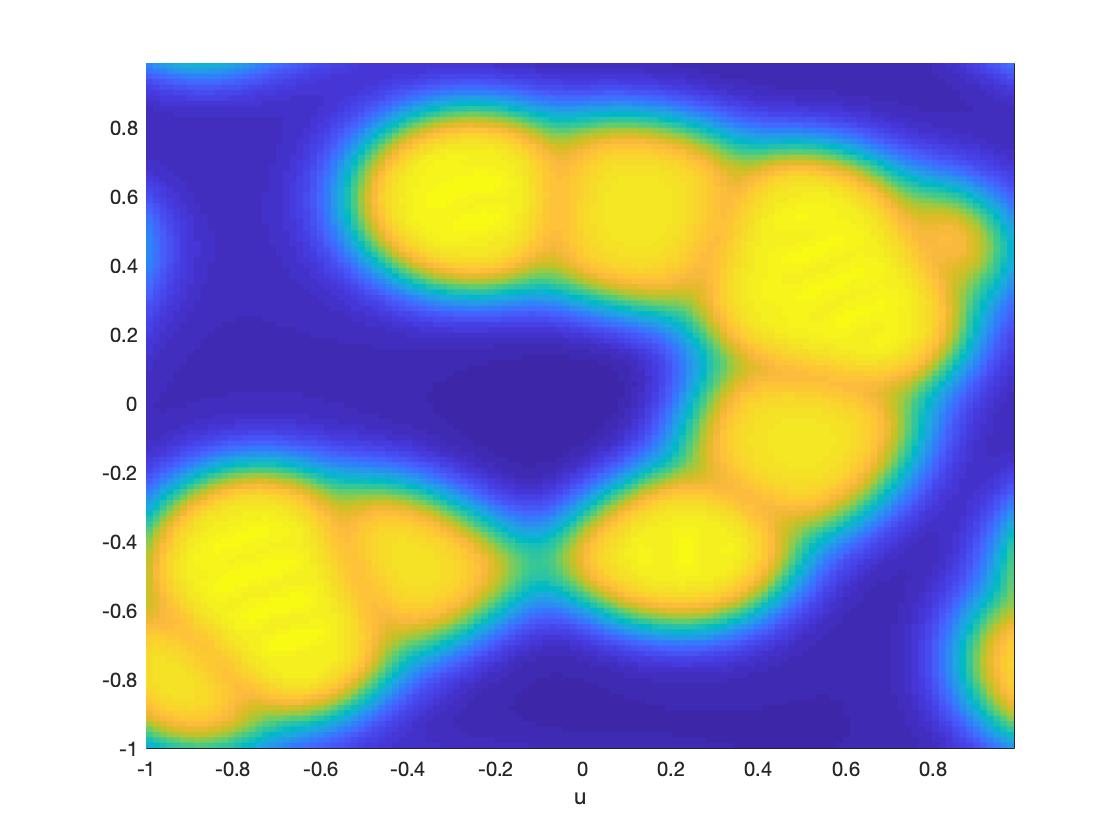}\hskip 0cm}
\subfigure[$\bv:t=0.5$.]{\includegraphics[width=0.22\textwidth,clip==]{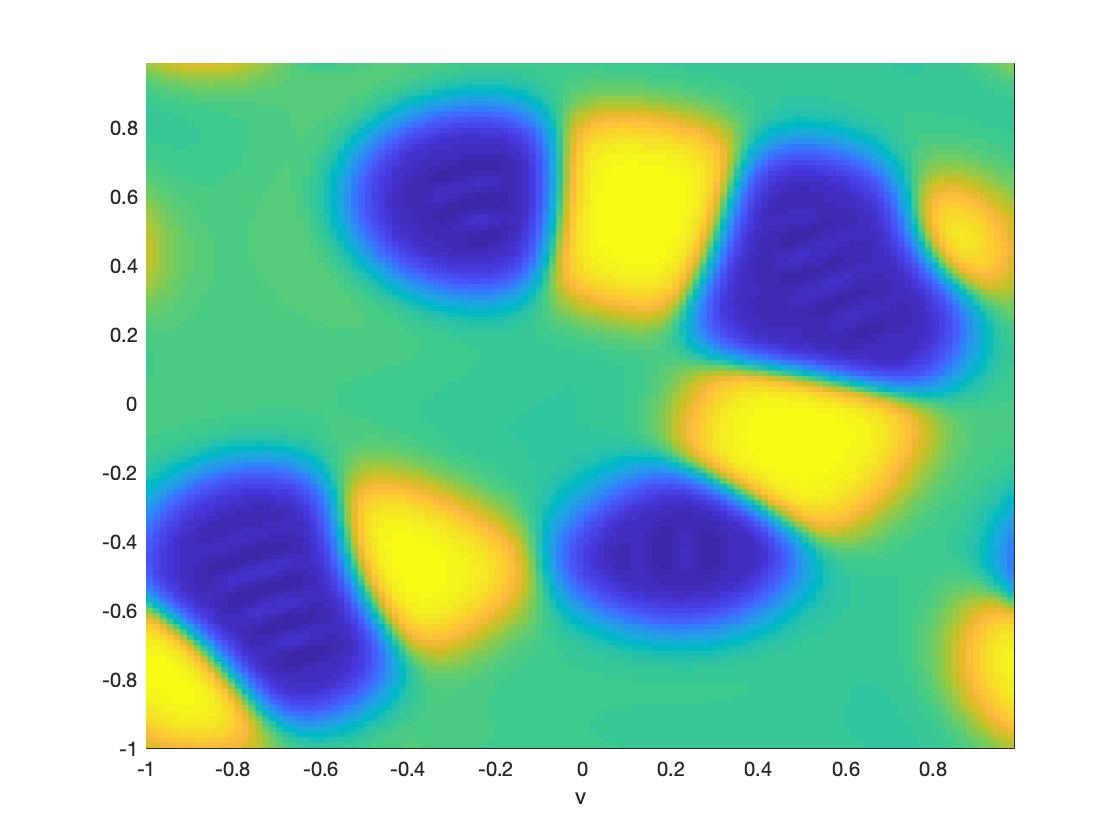}\hskip 0cm}
\subfigure[$\bu:t=1$.]{\includegraphics[width=0.22\textwidth,clip==]{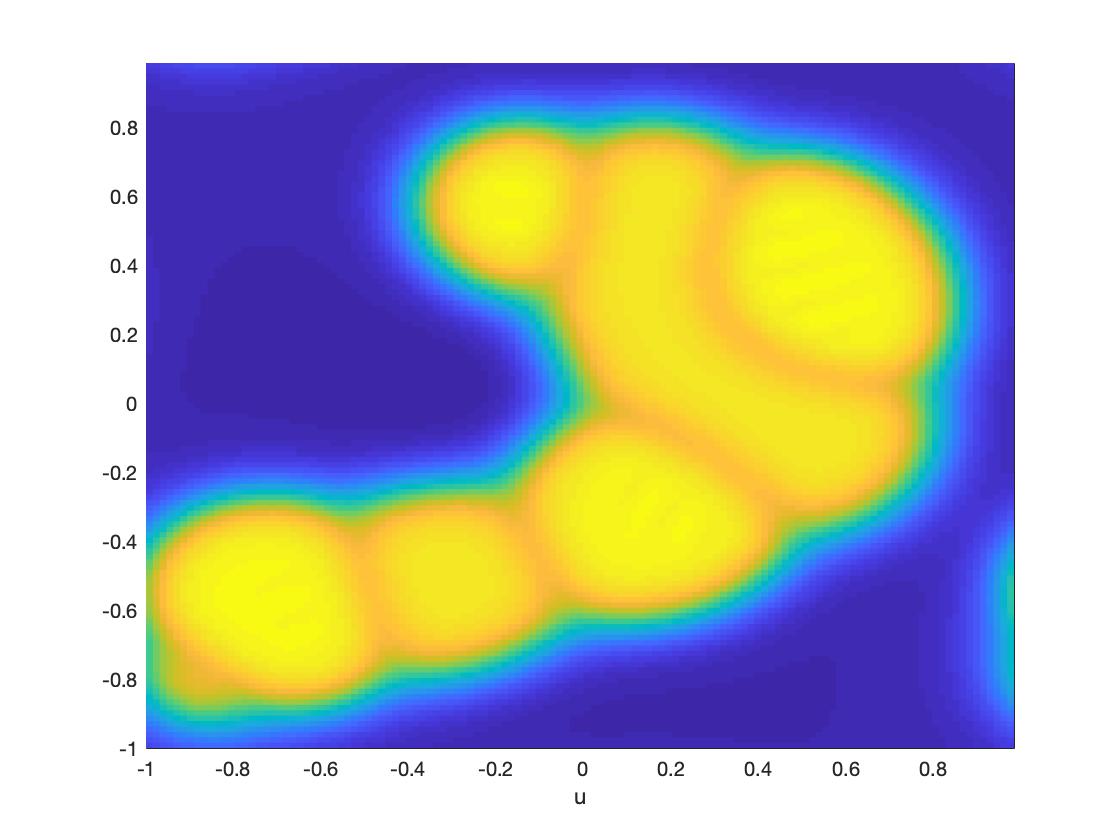}\hskip 0cm}
\subfigure[$\bv:t=1$.]{\includegraphics[width=0.22\textwidth,clip==]{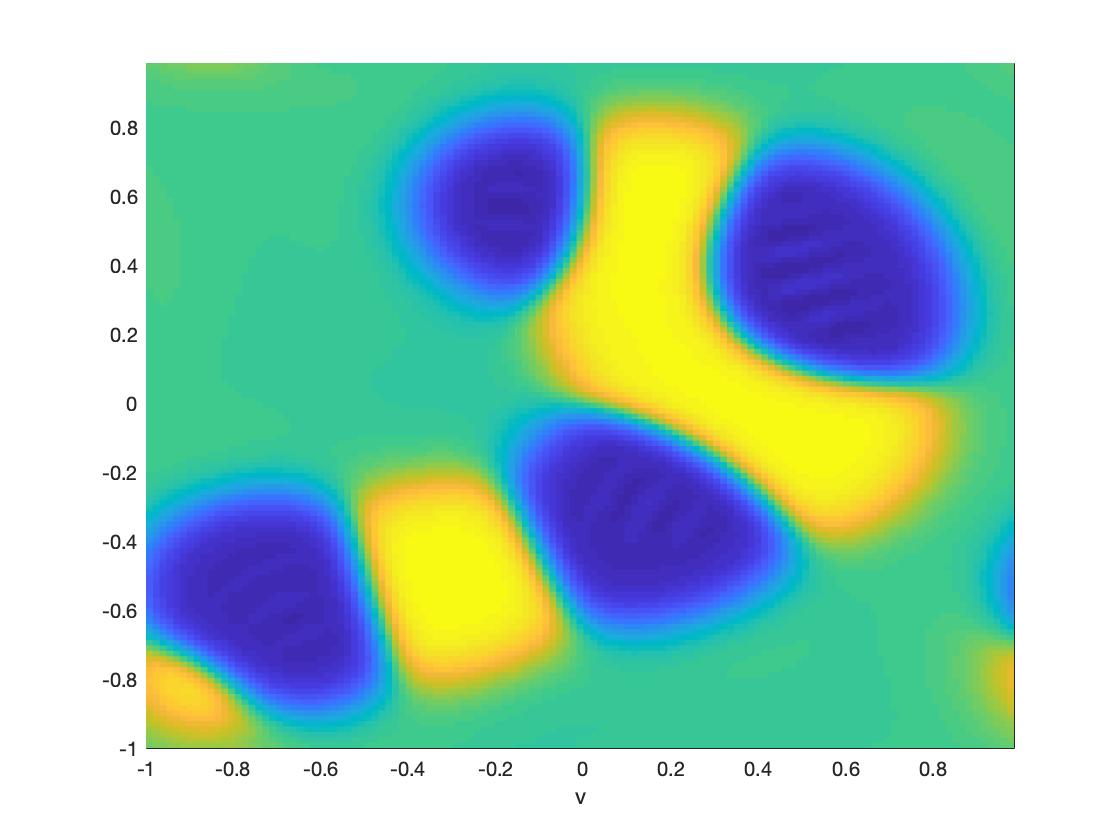}\hskip 0cm}
\subfigure[$\bu:t=2$.]{\includegraphics[width=0.22\textwidth,clip==]{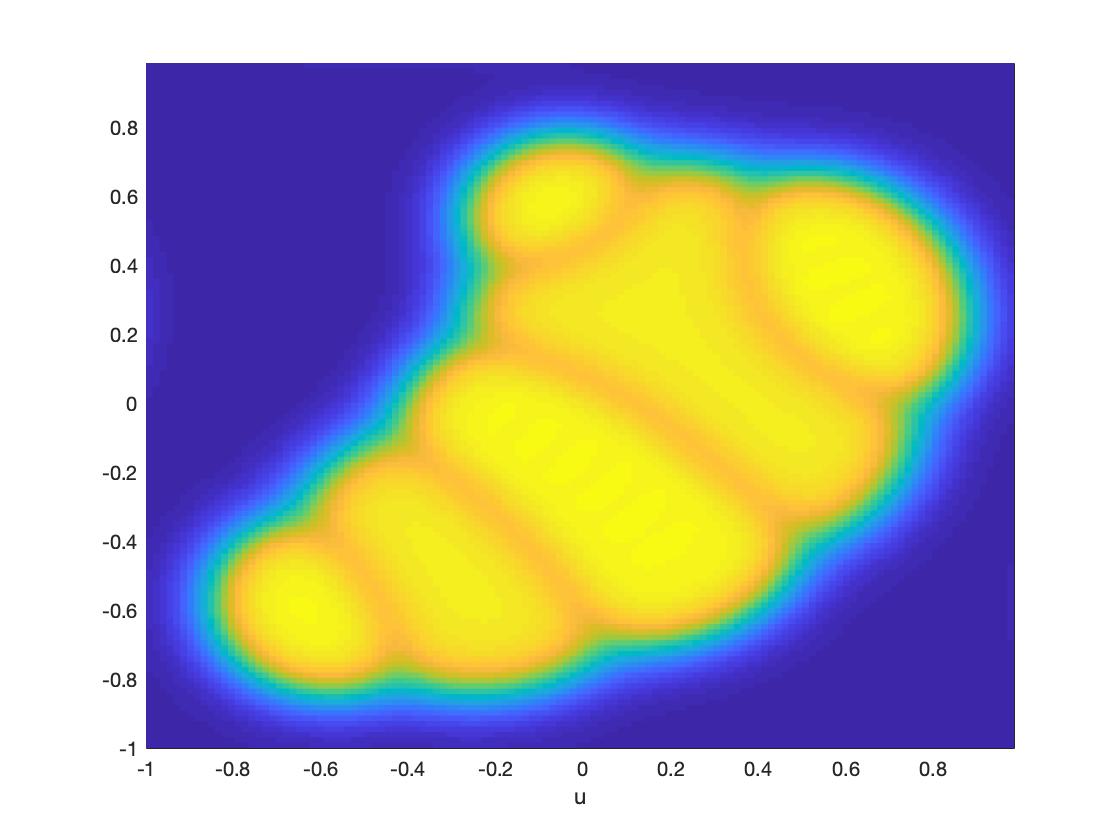}\hskip 0cm}
\subfigure[$\bv:t=2$.]{\includegraphics[width=0.22\textwidth,clip==]{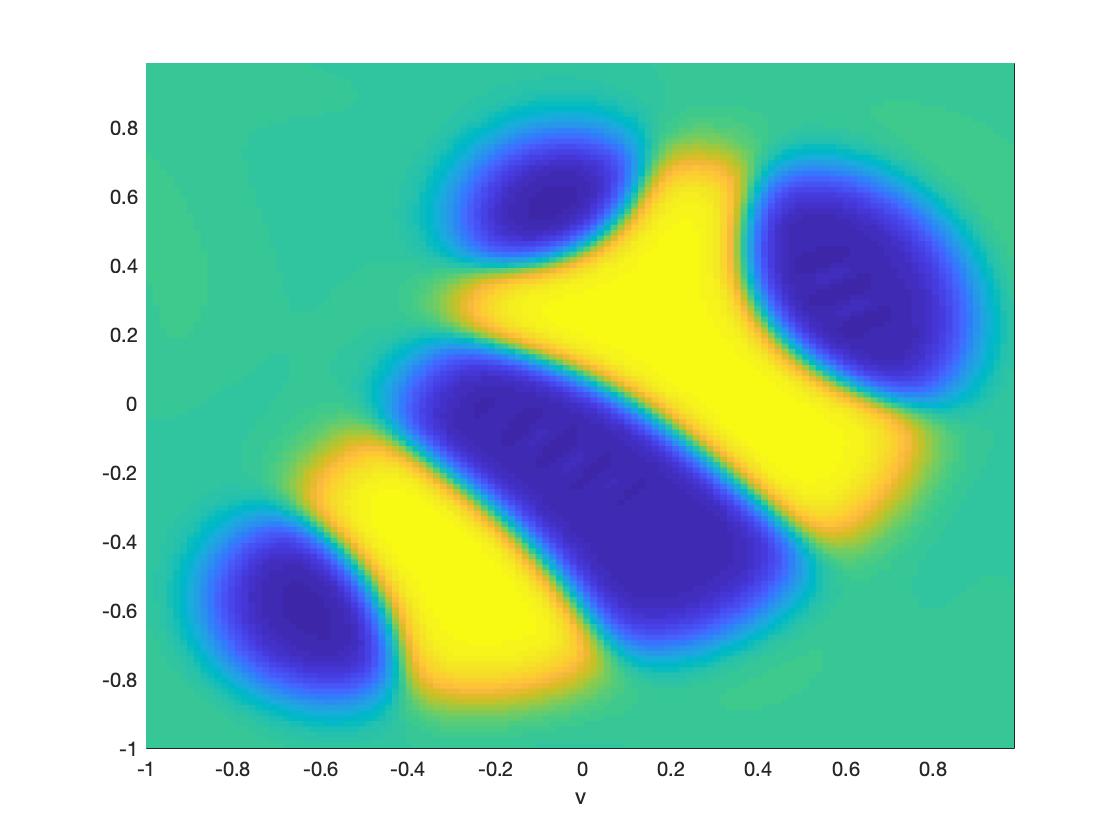}\hskip 0cm}
\subfigure[$\bu:t=3$.]{\includegraphics[width=0.22\textwidth,clip==]{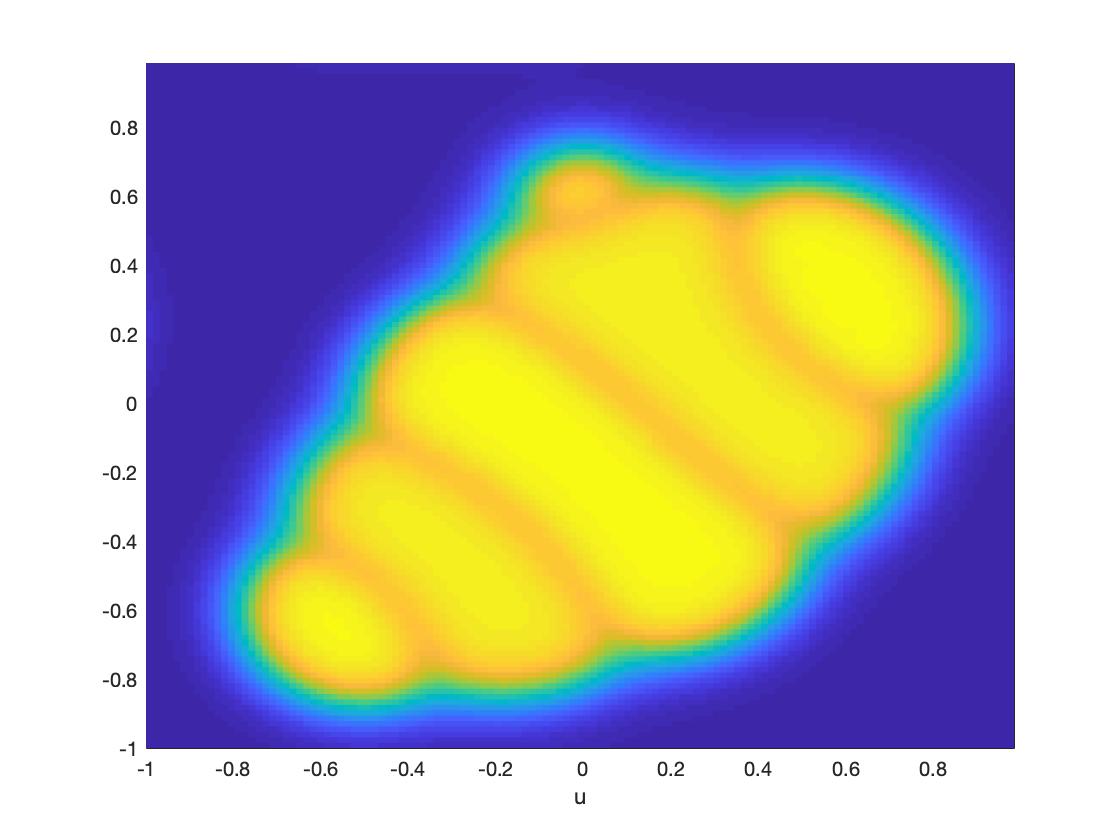}\hskip 0cm}
\subfigure[$\bv:t=3$.]{\includegraphics[width=0.22\textwidth,clip==]{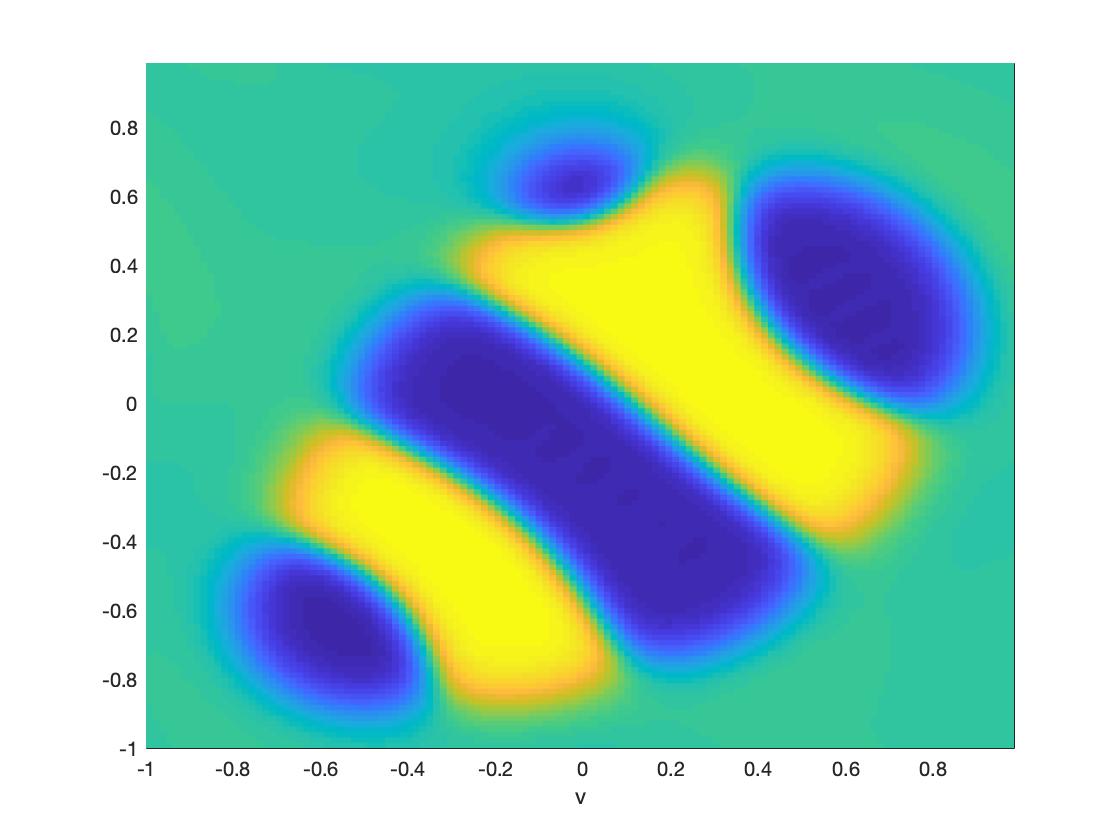}\hskip 0cm}
\subfigure[$\bu:t=4$.]{\includegraphics[width=0.22\textwidth,clip==]{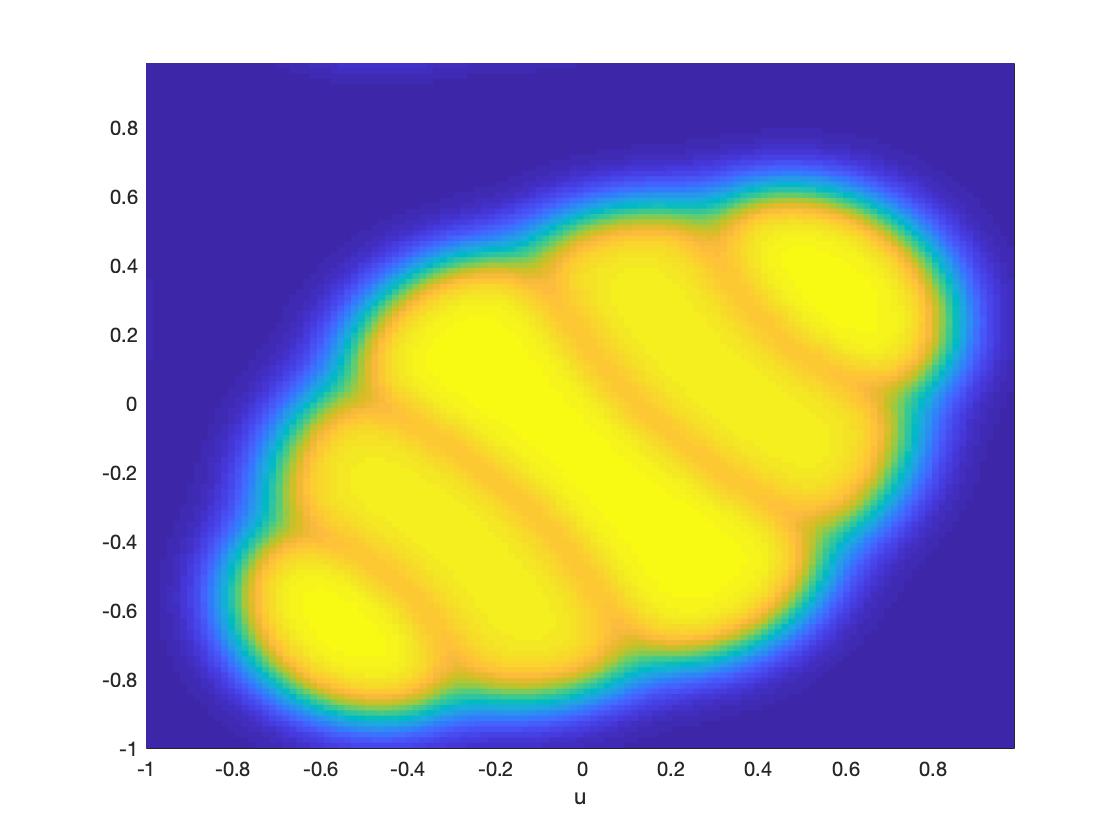}\hskip 0cm}
\subfigure[$\bv:t=4$.]{\includegraphics[width=0.22\textwidth,clip==]{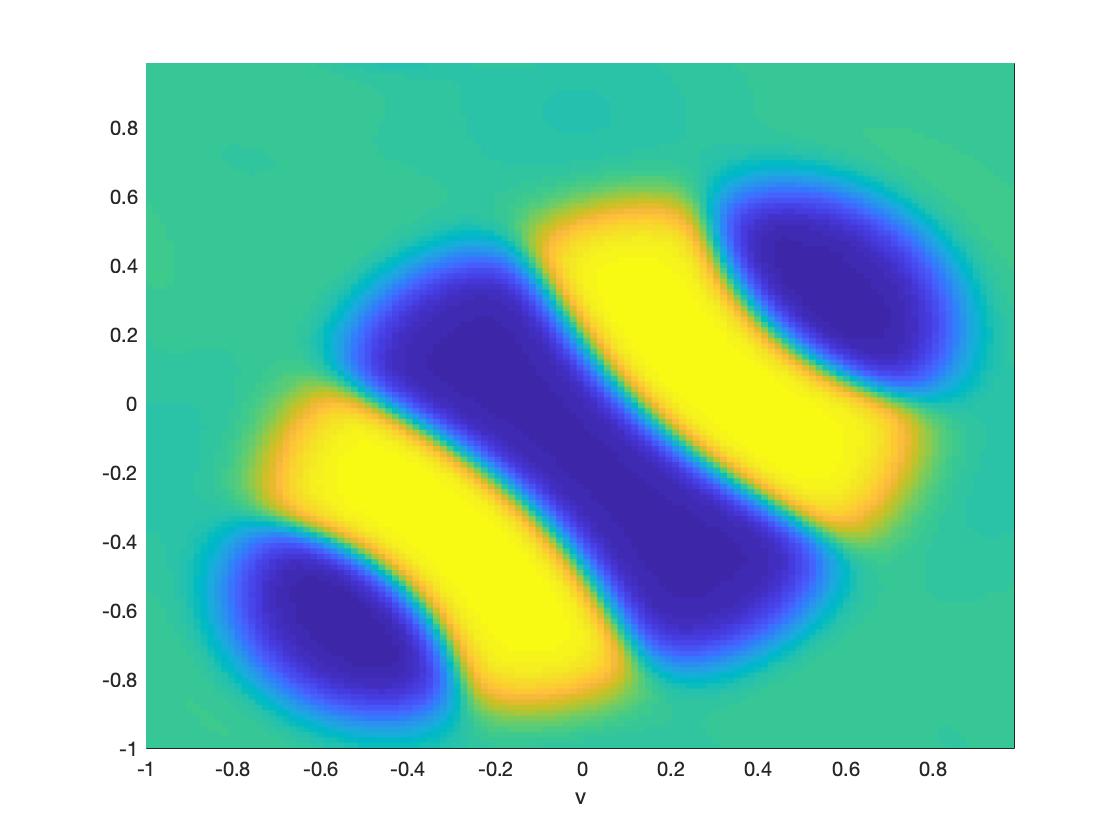}\hskip 0cm}
\caption{The 2D dynamical evolution of the phase variable $\bu,\bv$ for the Coupled-BCP model with the initial condition \eqref{initial} and $G=e^{x/c}$ with $c=10^4$.}\label{BCP}
\end{figure}

\begin{figure}
\centering
\subfigure[$\bu:t=0.2$.]{\includegraphics[width=0.22\textwidth,clip==]{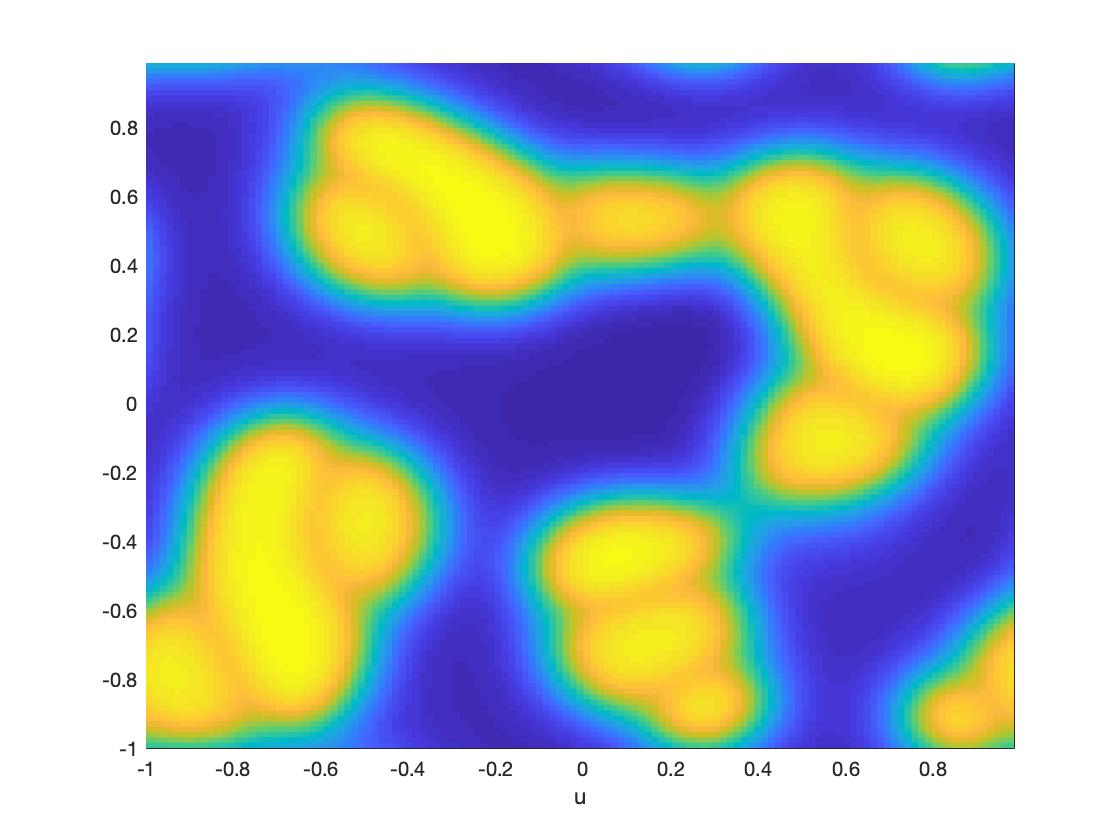}\hskip 0cm}
\subfigure[$\bv:t=0.2$.]{\includegraphics[width=0.22\textwidth,clip==]{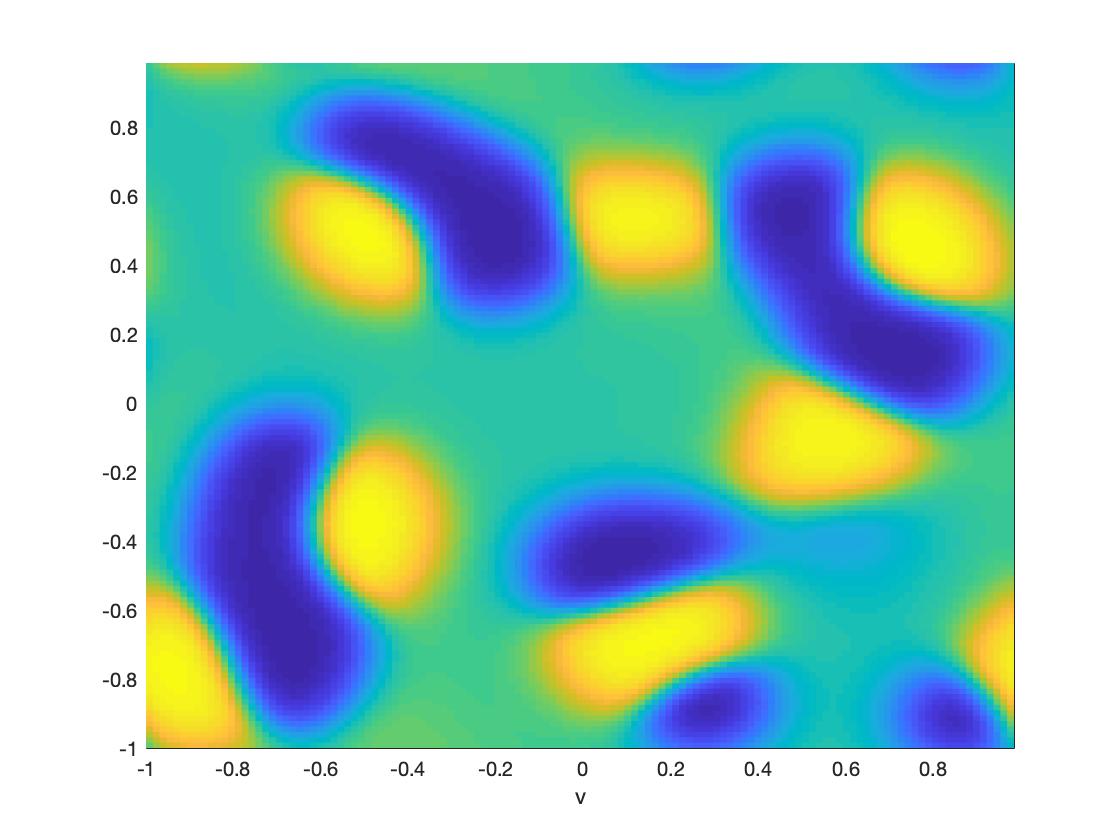}\hskip 0cm}
\subfigure[$\bu:t=0.5$.]{\includegraphics[width=0.22\textwidth,clip==]{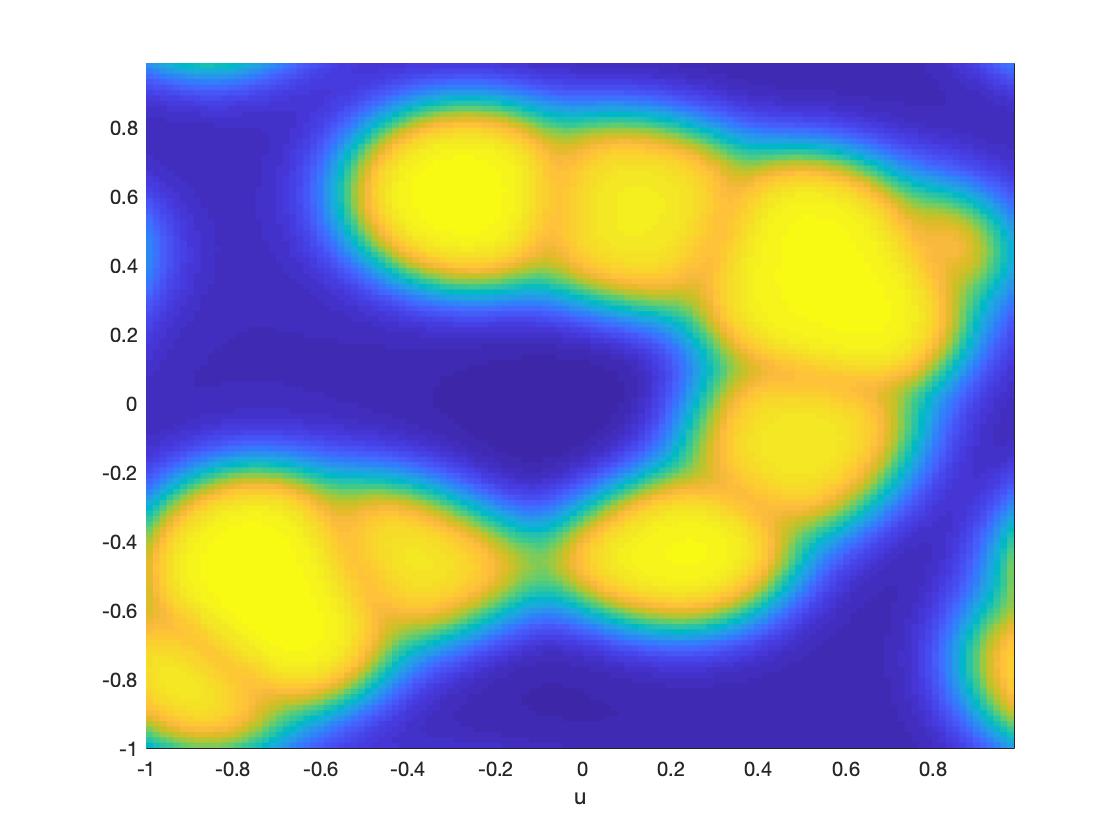}\hskip 0cm}
\subfigure[$\bv:t=0.5$.]{\includegraphics[width=0.22\textwidth,clip==]{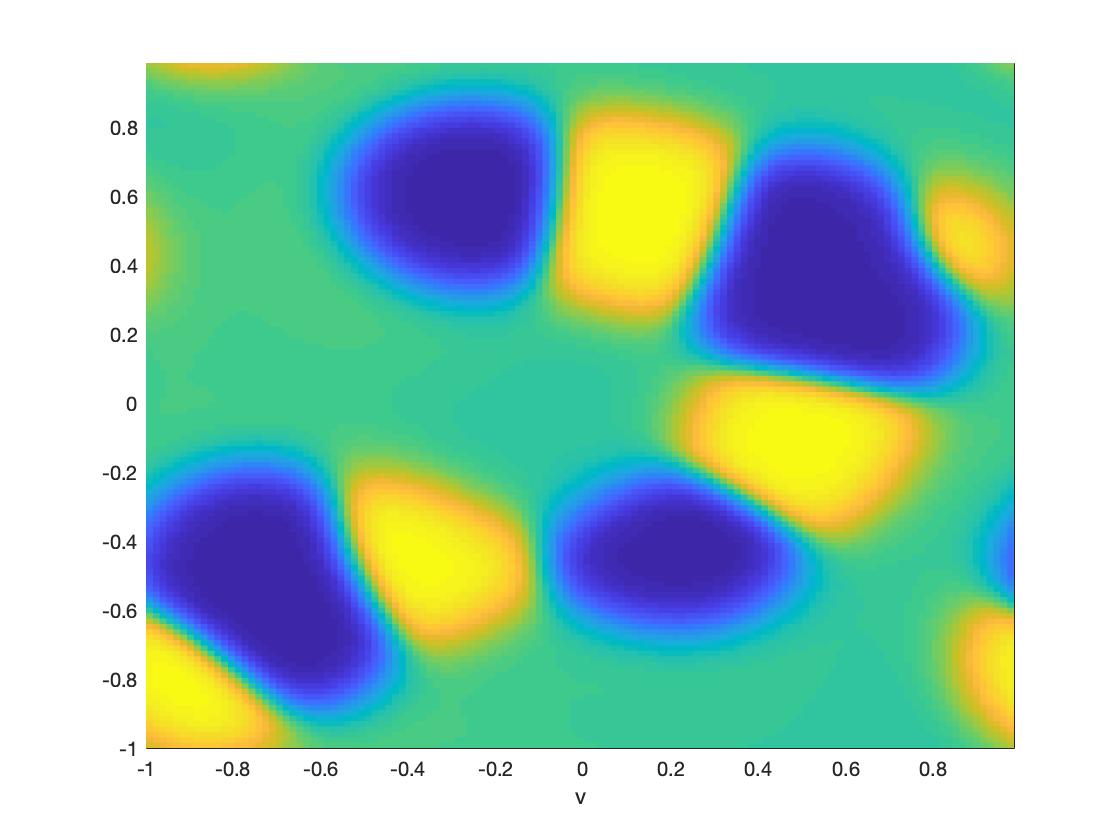}\hskip 0cm}
\subfigure[$\bu:t=1$.]{\includegraphics[width=0.22\textwidth,clip==]{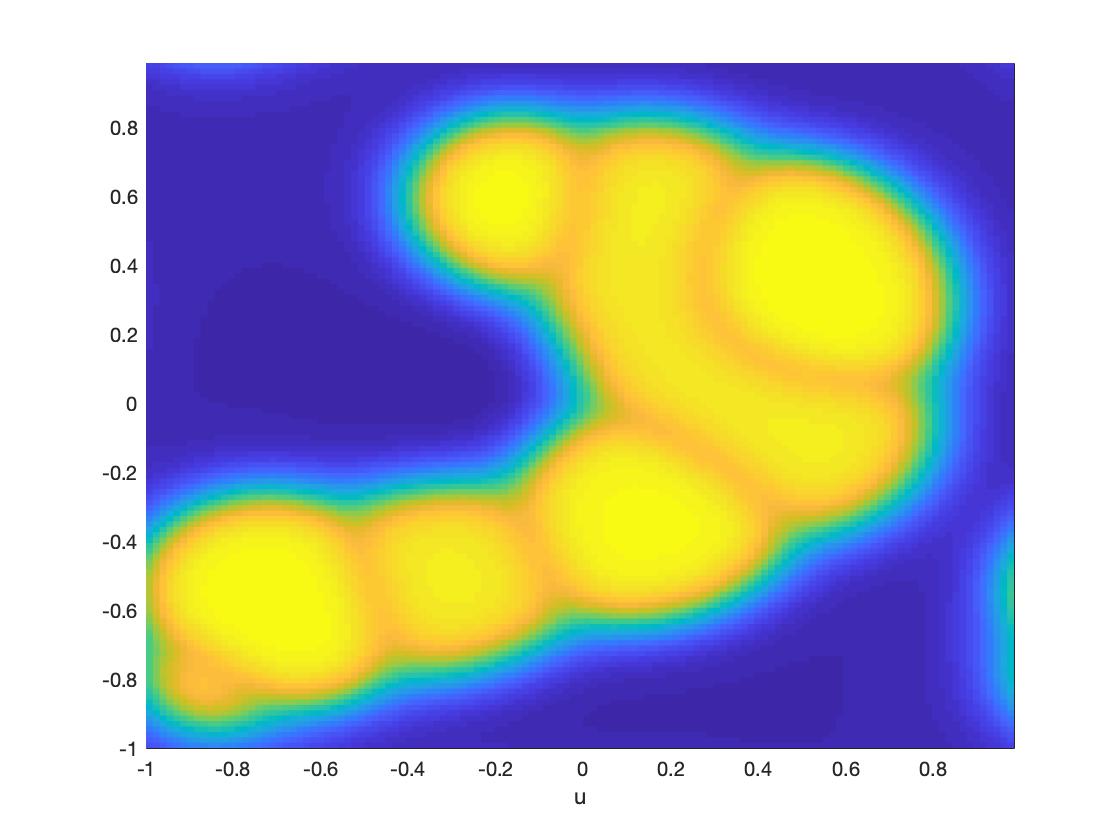}\hskip 0cm}
\subfigure[$\bv:t=1$.]{\includegraphics[width=0.22\textwidth,clip==]{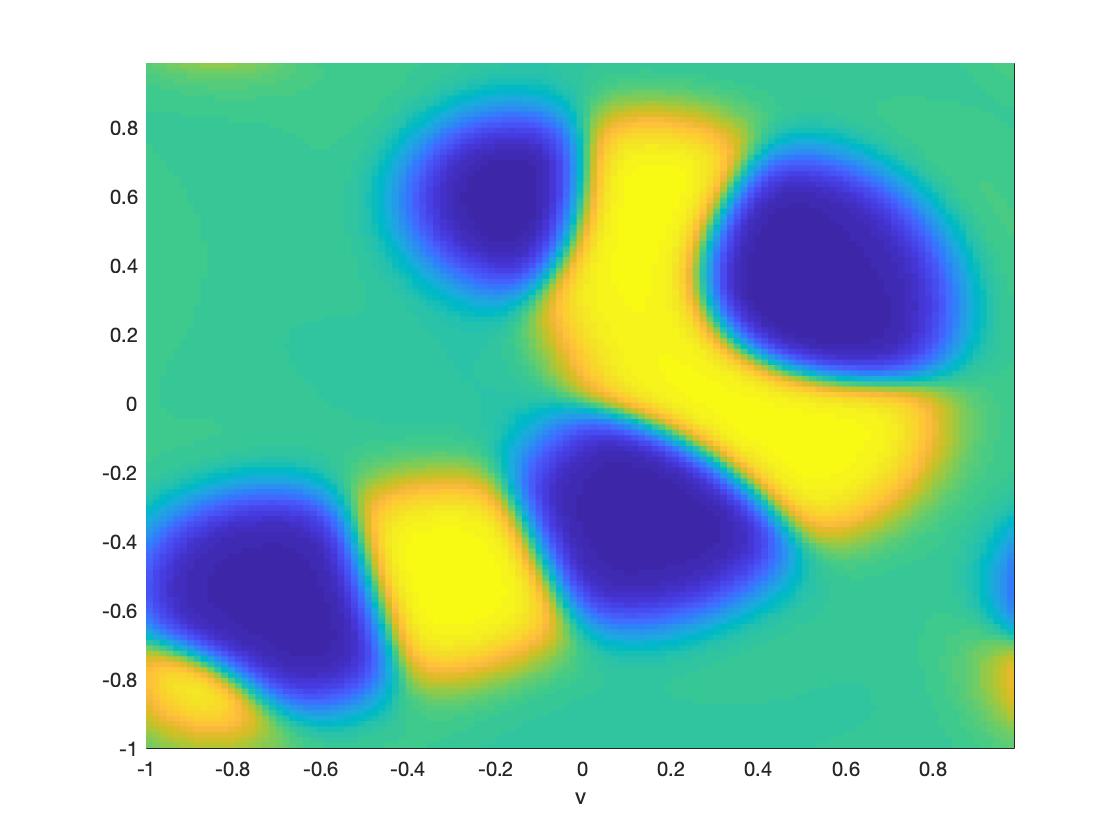}\hskip 0cm}
\subfigure[$\bu:t=2$.]{\includegraphics[width=0.22\textwidth,clip==]{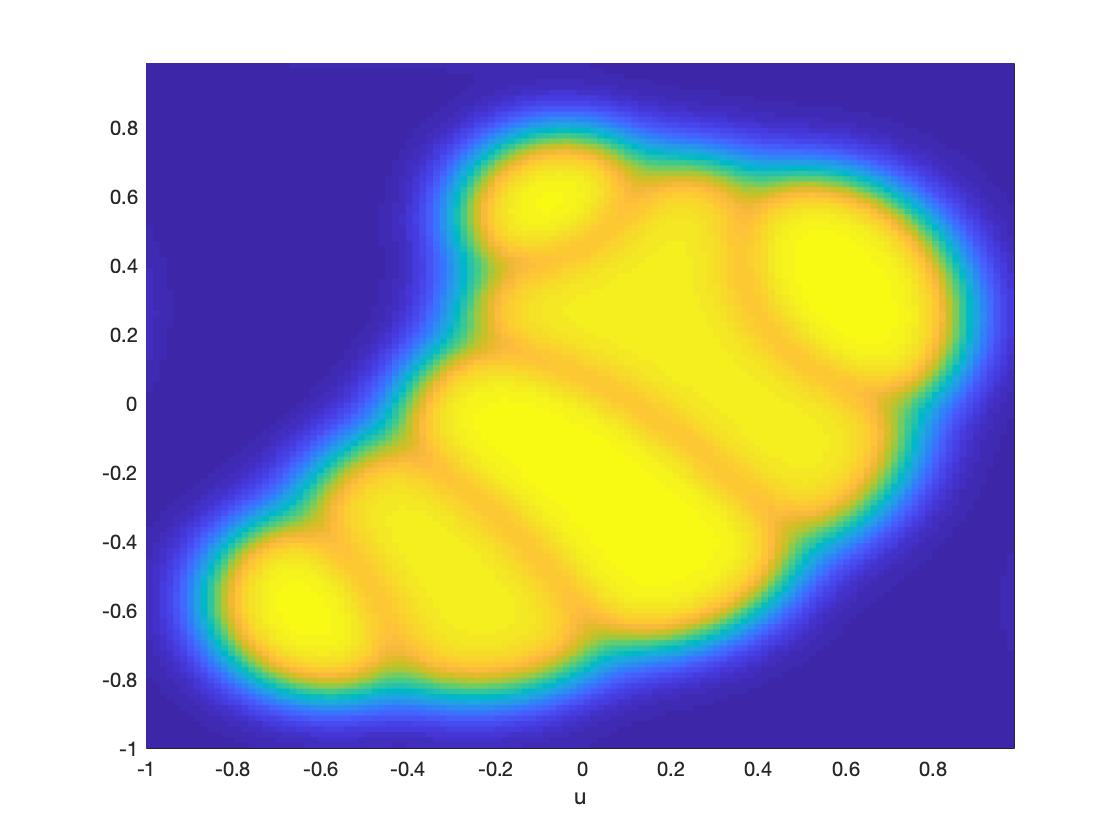}\hskip 0cm}
\subfigure[$\bv:t=2$.]{\includegraphics[width=0.22\textwidth,clip==]{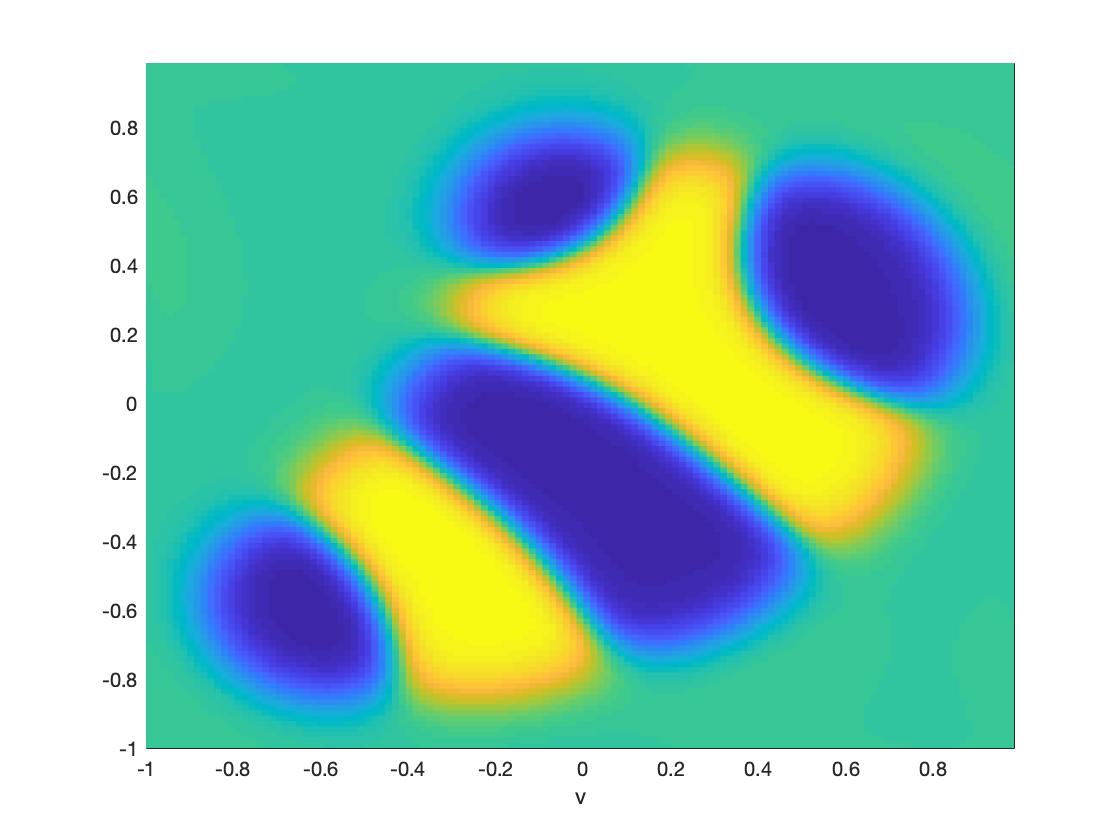}\hskip 0cm}
\subfigure[$\bu:t=3$.]{\includegraphics[width=0.22\textwidth,clip==]{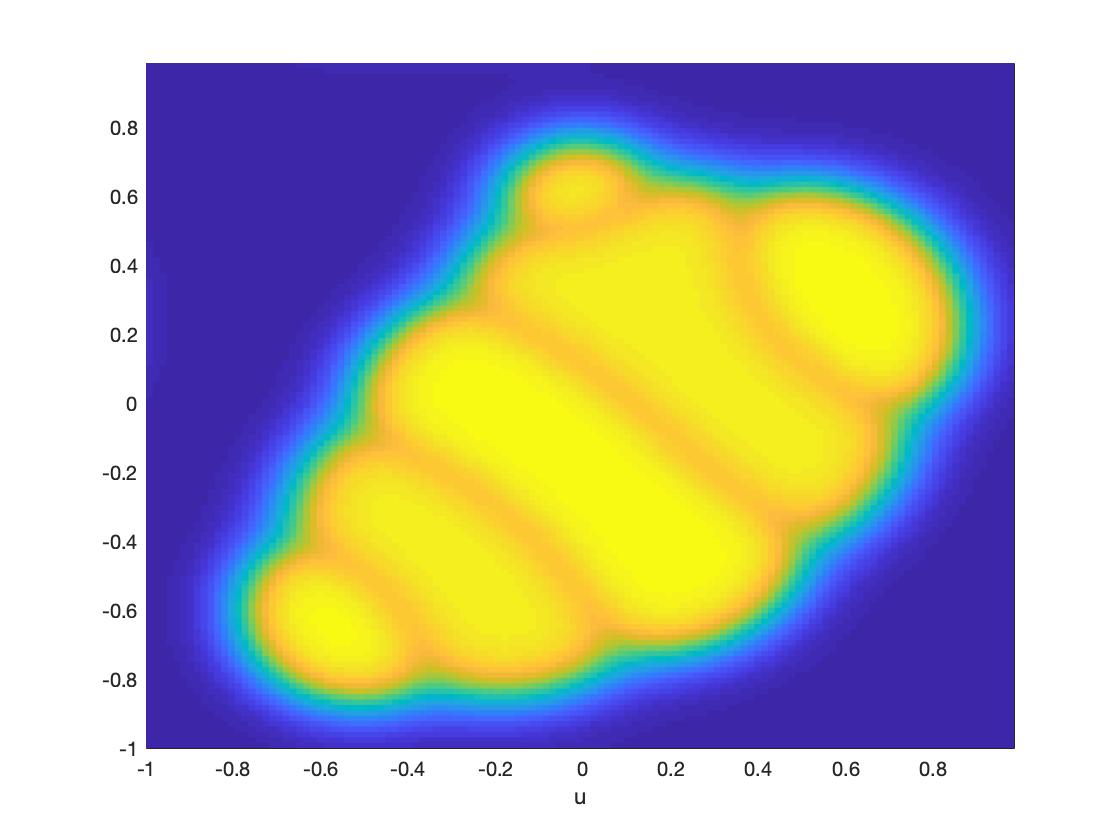}\hskip 0cm}
\subfigure[$\bv:t=3$.]{\includegraphics[width=0.22\textwidth,clip==]{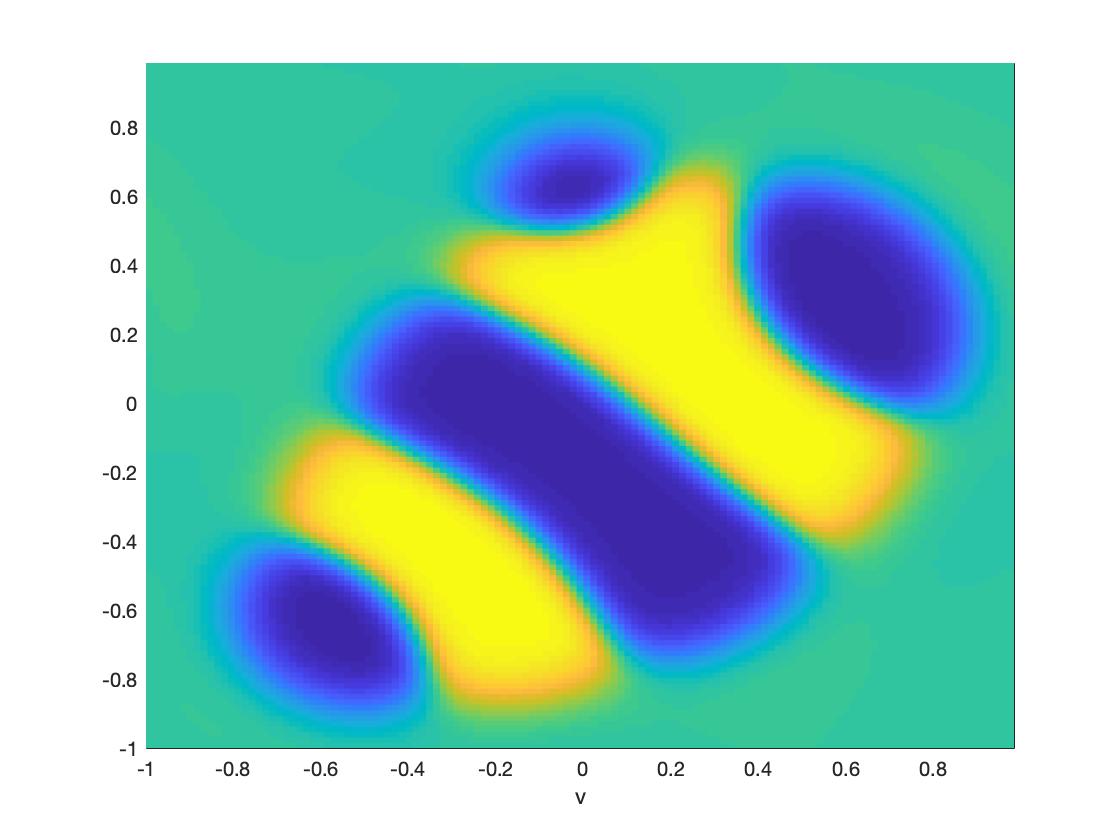}\hskip 0cm}
\subfigure[$\bu:t=4$.]{\includegraphics[width=0.22\textwidth,clip==]{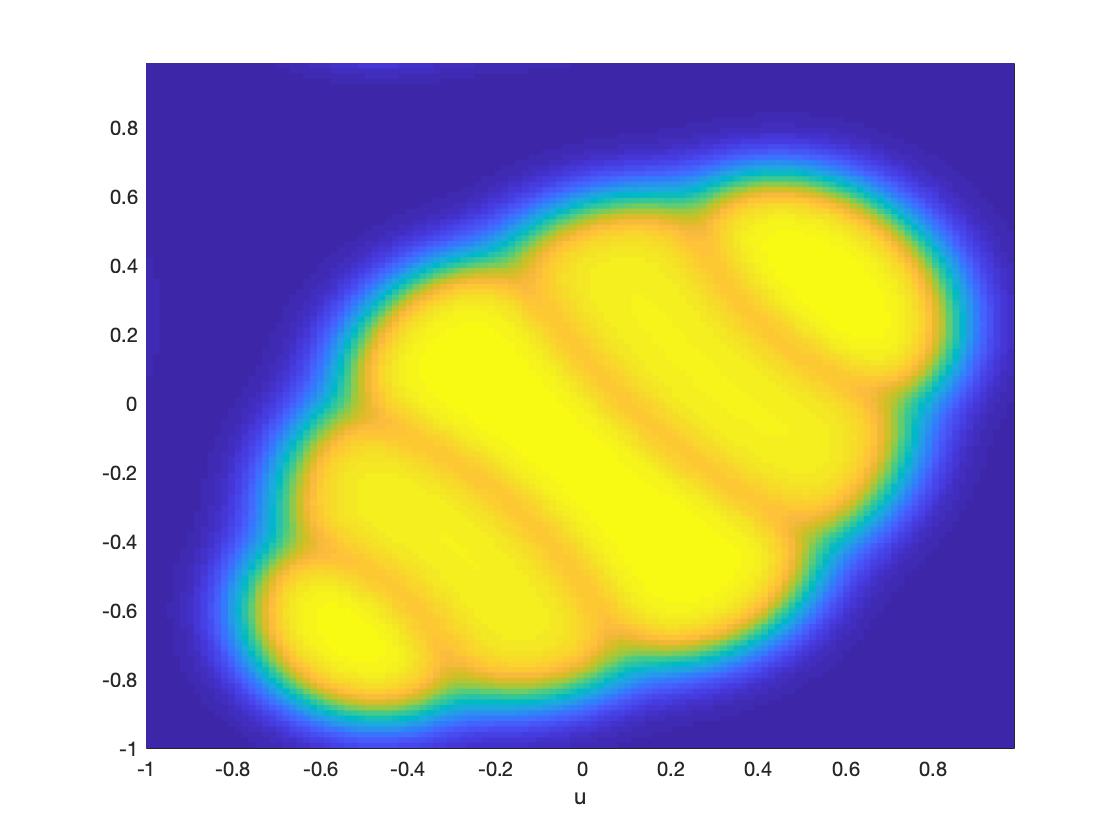}\hskip 0cm}
\subfigure[$\bv:t=4$.]{\includegraphics[width=0.22\textwidth,clip==]{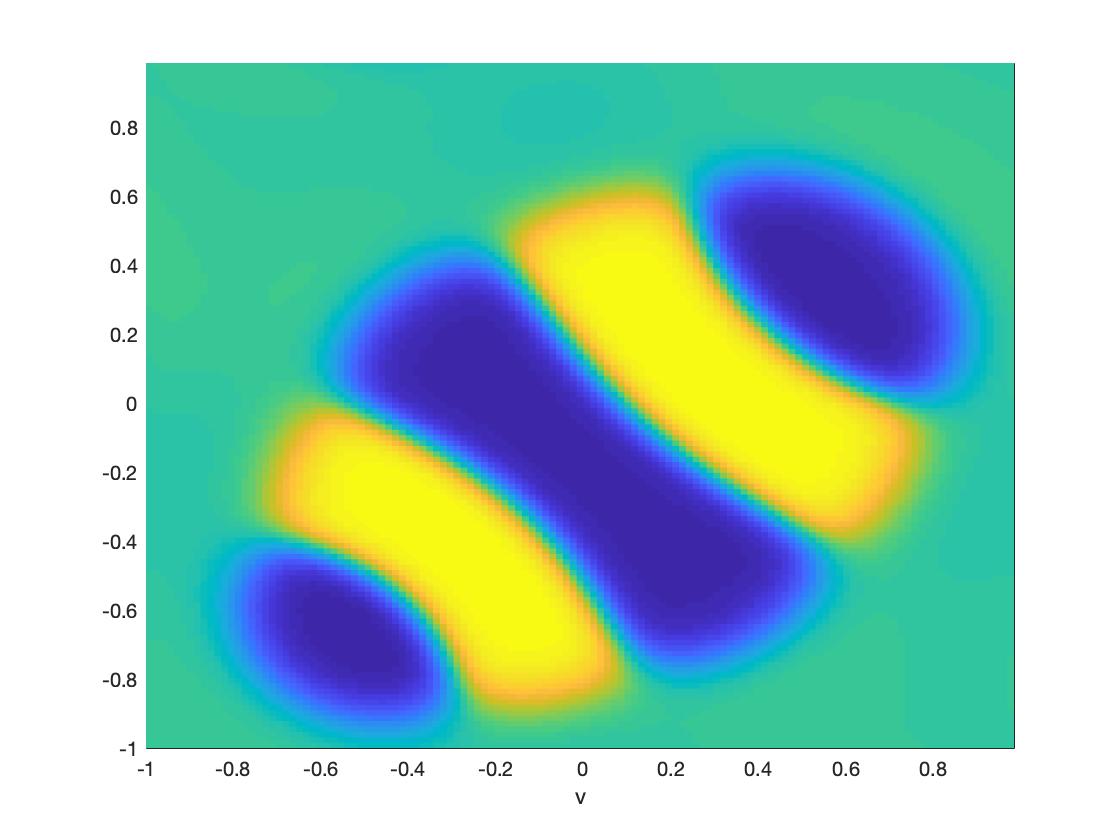}\hskip 0cm}
\caption{The 2D dynamical evolution of the phase variable $\bu,\bv$ for the Coupled-BCP model with the initial condition \eqref{initial} and $G=\sqrt{x+c}$ with $c=10$.}\label{BCP_2}
\end{figure}

\section{Concluding remarks}
How to construct efficient, accurate numerical schemes for gradient flows is a challenging task. The newly IEQ and  SAV approaches  are  developed  in recent years by introducing auxiliary variables.  But the form of auxiliary variables can only be defined as the square root function with respect to nonlinear part of energy or nonlinear potential. 
We  remove the definition restriction that auxiliary variables can only be square root function and develop three classes of  generalized-SAV approach.  Numerical schemes based on these three numerical approaches are efficient as the SAV schemes  i.e., only require solving linear  equations with constant coefficients at each time step.  The small price to pay for the first and third approaches is to solve  an additional nonlinear algebraic system which can be solved at negligible cost.  For the second approach the auxiliary variable can be guaranteed to be positive by choosing  $\tanh$ function or exponential function which IEQ and SAV approaches can not preserve.
Moreover,  all three  approaches lead to  schemes which are unconditionally energy stable.  
We present  ample numerical results to show the efficiency and accuracy of numerical approaches we proposed.  Our numerical results indicate that  the proposed approaches can achieve accurate results which are comparable with ETDRK2 scheme and original SAV schemes. Numerical simulations from coupled Cahn-Hilliard model show that the first approach is more robust and accurate than the second approach.  
 
Although we consider  only time-discretization schemes in this paper,  they can be combined with any consistent finite dimensional Galerkin type approximations in practice,    since the stability  proofs are all based on  variational  formulations with all test functions in the same space as the trial functions.

\bibliographystyle{siamplain}
\bibliography{references}
\end{document}